\documentclass[a4paper, reqno]{amsart}

\usepackage[all]{xy}
\usepackage[english]{babel}
\usepackage[utf8]{inputenc}
\usepackage{amssymb,amsmath,amsthm}
\usepackage{amsfonts}
\usepackage{graphicx}
\usepackage{psfrag}
\usepackage[dvipsnames]{xcolor}
\usepackage[hidelinks]{hyperref}
\usepackage{csquotes}
\usepackage{amsrefs}

\allowdisplaybreaks

\newcommand{\id}{\mathrm{id}}
\newcommand{\Ric}{\mathrm{Ric}}

\newcommand{\e}{\epsilon}

\renewcommand{\S}{\Sigma}

\newcommand{\n}{\nabla}
\newcommand{\on}{\bar \nabla}
\newcommand{\en}{\on^E}
\newcommand{\im}{\mathrm{im}}
\renewcommand{\L}{\mathcal{L}}

\newcommand{\bR}{{\bar R}}
\newcommand{\g}{{\bar g}}
\renewcommand{\o}{\omega}
\renewcommand{\O}{\mathcal O}
\newcommand{\U}{\mathcal U}
\newcommand{\supp}{\mathrm{supp}}
\newcommand{\tr}{\mathrm{tr}}
\renewcommand{\a}{\alpha}

\renewcommand{\b}{\beta}
\renewcommand{\d}{\partial}
\newcommand{\Hess}{\mathrm{Hess}}
\newcommand{\abs}[1]{\left\lvert#1\right\rvert}
\newcommand{\norm}[1]{\left\lVert#1\right\rVert}
\newcommand{\ldr}[1]{\left\langle #1 \right\rangle}
\renewcommand{\div}{\mathrm{div}}
\newcommand{\grad}{\mathrm{grad}}
\renewcommand{\H}{\mathcal H}

\renewcommand{\Re}{\mathrm{Re}}

\newcommand{\ext}{\mathrm{ext}}
\newcommand{\bBox}{{\bar \Box}}

\newcommand{\f}{\phi}
\newcommand{\W}{Z}

\theoremstyle{plain}
\newtheorem{thm}{Theorem}[section]
\newtheorem{prop}[thm]{Proposition}
\newtheorem{lemma}[thm]{Lemma}
\newtheorem{cor}[thm]{Corollary}
\theoremstyle{definition}
\newtheorem{definition}[thm]{Definition}
\newtheorem{notation}[thm]{Notation}

\newtheorem{remark}[thm]{Remark}
\newtheorem{example}[thm]{Example}

\newtheorem{assumption}[thm]{Assumption}

\newcommand{\R}[0]{\mathbb{R}}							
\newcommand{\N}[0]{\mathbb{N}}							
\newcommand{\Z}[0]{\mathbb{Z}}							


\author{Oliver Lindblad Petersen}
\title[Extension of Killing vector fields]{Extension of Killing vector fields beyond compact Cauchy horizons}
\address{Department of Mathematics, Stanford University, CA 94305-2125, USA}
\email{oliverlp@stanford.edu}


\begin{document}
\hbadness=100000
\vbadness=100000

\begin{abstract}

We prove that any compact Cauchy horizon with constant non-zero surface gravity in a smooth vacuum spacetime is a smooth Killing horizon.
The novelty here is that the Killing vector field is shown to exist on \emph{both} sides of the horizon.
This generalises classical results by Moncrief and Isenberg, by dropping the assumption that the metric is analytic.
In previous work by Rácz and the author, the Killing vector field was constructed on the globally hyperbolic side of the horizon.
In this paper, we prove a new unique continuation theorem for wave equations through smooth compact lightlike (characteristic) hypersurfaces which allows us to extend the Killing vector field beyond the horizon.
The main ingredient in the proof of this theorem is a novel Carleman type estimate.
Using a well-known construction, our result applies in particular to smooth stationary asymptotically flat vacuum black hole spacetimes with event horizons with constant non-zero surface gravity.
As a special case, we therefore recover Hawking's local rigidity theorem for such black holes, which was recently proven by Alexakis-Ionescu-Klainerman using a different Carleman type estimate.

\end{abstract}

\maketitle

\tableofcontents
\begin{sloppypar}

\section{Introduction}

A classical conjecture in General Relativity, by Moncrief and Isenberg \cite{MoncriefIsenberg1983}, states that any compact Cauchy horizon in a vacuum spacetime is a Killing horizon. 
It says in particular that vacuum spacetimes containing compact Cauchy horizons admit a Killing vector field and are therefore non-generic.
One could therefore consider this as a first step towards Penrose's strong cosmic censorship conjecture in general relativity, without symmetry assumptions.
Indeed, it would imply that maximal globally hyperbolic vacuum developments of generic initial data cannot be extended over compact Cauchy horizons (see \cite{Petersen2018} for a more precise explanation of this).
The conjecture also turns out to be a natural generalisation of Hawking's local rigidity theorem for stationary vacuum black holes.
Moncrief and Isenberg have made remarkable progress on their conjecture in the last decades, see \cite{MoncriefIsenberg1983}, \cite{IsenbergMoncrief1985}, \cite{MoncriefIsenberg2008} and \cite{MoncriefIsenberg2018}, under the assumption that the spacetime metric is \emph{analytic}.

\begin{figure*}
  \begin{center}
    \includegraphics[scale = 0.5]{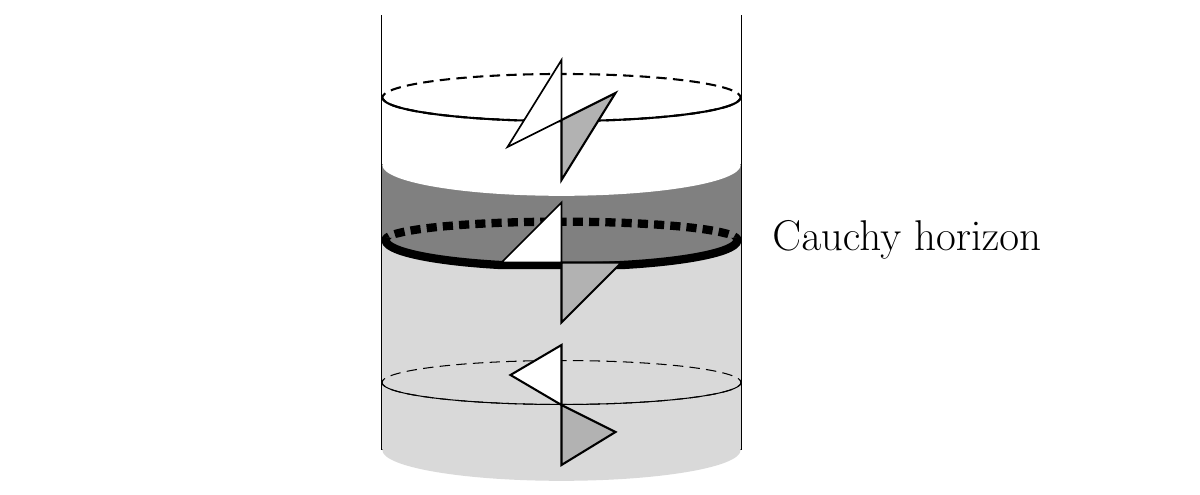}
  \end{center}
  \caption{
  	The lightly shaded region illustrates the globally hyperbolic region, where the Killing vector field is known to exist. 
  	We prove in this paper that the Killing vector field extends beyond the Cauchy horizon into the darkly shaded region.
  	}
  	\label{fig: Regions_CauchyHorizon}
\end{figure*}

In this paper, we are interested in the case when the spacetime metric is only assumed to be \emph{smooth}, as opposed to analytic.
The main problem in the smooth setting is that we do not have the Cauchy-Kowalevski theorem at our disposal anymore.
We instead need to propagate the Killing vector field using linear wave equations.
Since Cauchy horizons are lightlike hypersurfaces (c.f.\ Figure \ref{fig: Regions_CauchyHorizon}), the metric degenerates and we thus need to perform a singular analysis of wave equations close to the horizon.
The purpose of this paper is to present methods that replace the Cauchy-Kowalevski theorem in proving Moncrief-Isenberg's conjecture, assuming the surface gravity can be normalised to a non-zero constant.
This allows us to drop the highly restrictive assumption that the spacetime metric is analytic.

The first generalisation of the Moncrief-Isenberg results to smooth metrics was done by Friedrich-Rácz-Wald in \cite{FRW1999}.
They showed that if the surface gravity is a non-zero constant and the generators (the lightlike curves) of the horizon are \emph{all closed}, then there exists a Killing vector field on the \emph{globally hyperbolic side} of the Cauchy horizon.
The proof relies on a clever transform of the problem into a characteristic Cauchy problem, with initial data prescribed on two intersecting lightlike hypersurfaces.
This initial value problem can be solved using classical results, see for example \cite{Rendall1990}.

If the generators do not all close, one cannot use the approach of Friedrich-Rácz-Wald.
Due to this, the author developed new methods to solve linear wave equations with initial data on compact Cauchy horizons with constant non-zero surface gravity, see \cite{Petersen2018}.
Using \cite{Petersen2018}*{Thm. 1.6}, Rácz and the author generalised the result of Friedrich-Rácz-Wald by dropping the assumption that the generators close.
We proved that if the surface gravity is a non-zero constant, then there always exists a Killing vector field on the globally hyperbolic side of the Cauchy horizon, see \cite{PetersenRacz2018}*{Thm. 1.2}.
It is worth noting that our result allows \enquote{ergodic} behaviour of the generators, a case which was open even for analytic spacetime metrics.

However, the results in \cite{FRW1999} and \cite{PetersenRacz2018} do  not prove that the Cauchy horizon is a Killing horizon.
The Killing vector field was in both papers only shown to exist on the globally hyperbolic side of the Cauchy horizon.
It remains to prove that the Killing vector field extends beyond the horizon.
The difficulty here is that beyond the Cauchy horizon there are \emph{closed causal curves}, which makes the classical theory of wave equations useless. 
This is illustrated in Figure \ref{fig: Regions_CauchyHorizon} (see also Example \ref{ex: Misner}).
The main result of this paper is a solution to this problem.
We prove that if the surface gravity of the compact Cauchy horizon is a non-zero constant, then the Killing vector field constructed in \cite{PetersenRacz2018}*{Thm.\ 1.2} can indeed be extended beyond the Cauchy horizon, see \textbf{Theorem \ref{thm: Killing Horizon main}} below.
For the definitions and precise results, we refer to Subsection \ref{subsec: main results}.

Our argument is based on a new type of ``non-local" unique continuation theorem for wave equations through smooth compact lightlike (characteristic) hypersurfaces.
We prove that if a solution to a linear wave equation vanishes to infinite order \emph{everywhere} along a smooth \emph{compact} lightlike hypersurface, with constant non-zero surface gravity, in a spacetime satisfying the dominant energy condition, then the solution vanishes on an open neighbourhood of the hypersurface.
This is the main analytical novelty of this paper, see \textbf{Theorem \ref{thm: Wave main}} and the stronger, yet more technical, \textbf{Theorem \ref{thm: unique continuation}} below.
In order to extend the Killing vector field, using our unique continuation result, we apply an important recent result by Ionescu-Klainerman \cite{IonescuKlainerman2013}*{Prop. 2.10}.

Our result is the first unique continuation theorem for wave equations through smooth lightlike (characteristic) hypersurfaces, apart from our \cite{Petersen2018}*{Cor.\ 1.8}, which is a one-sided version of the result here.
Indeed, \enquote{local} unique continuation, in the spirit of Hörmander's classical theorem for pseudo-convex hypersurfaces \cite{Hormander1985}, is false for smooth lightlike hypersurfaces.
As it turns out, not only the assumption of compactness of the lightlike hypersurfaces is important, also our assumption on the surface gravity is crucial, see Remark \ref{rmk: pseudo-convex}.
Example \ref{ex: van surf grav} shows that unique continuation is false for general compact lightlike hypersurfaces with vanishing surface gravity.

\begin{figure*} 
    \includegraphics[scale = 0.4]{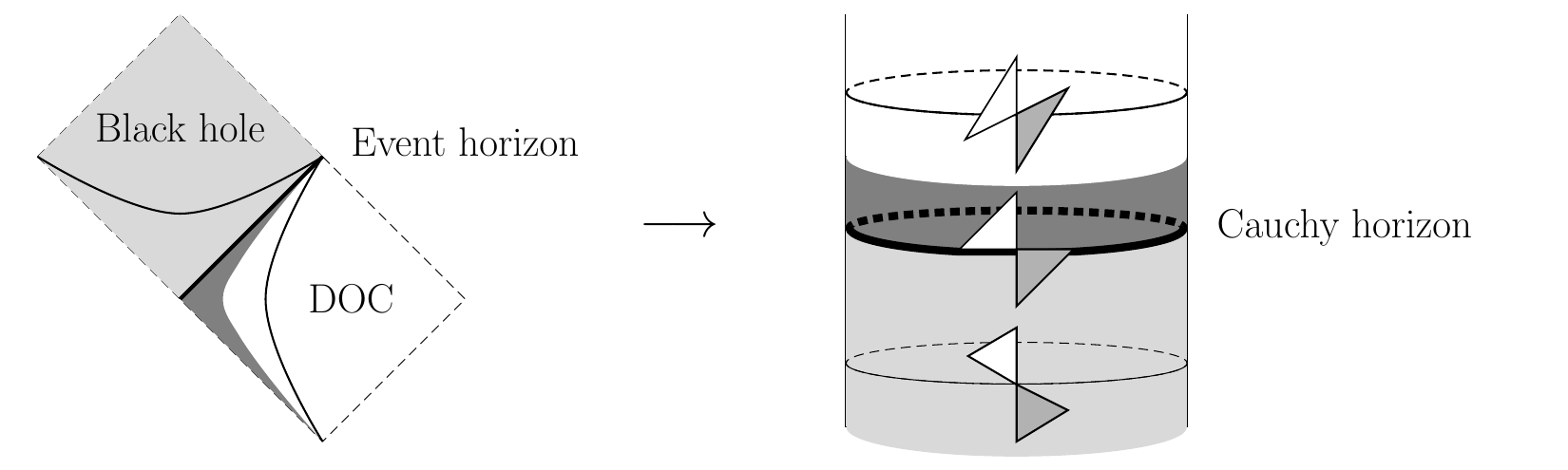}
  \caption{
  	Our extension of the Killing vector field \emph{beyond} the compact Cauchy horizon lifts to a Killing vector field in the \emph{domain of outer communication} (close to the event horizon) of the stationary black hole spacetime, reproving a recent theorem by Alexakis-Ionescu-Klainerman, without using a bifurcation surface.
  	}
  	\label{fig: EH to CH}
\end{figure*}

In order to explain how this work is related to the black hole uniqueness conjecture in general relativity, let us first recall the formulation and the state of art of that conjecture.
It asserts that the domain of outer communication of any $4$-dimensional stationary asymptotically flat vacuum black hole spacetime is isometric to the domain of outer communication of a Kerr spacetime.
By classical work by Carter \cite{C1971} and Robinson \cite{R1975}, the conjecture is proven under the additional assumption of non-degeneracy of the event horizon and \emph{axisymmetry} of the spacetime.
Using these results, Hawking proved that the non-extremal Kerr spacetimes are the only \emph{analytic} stationary asymptotically flat vacuum black hole spacetimes with non-degenerate event horizons, see \cite{Hawking1972}, \cite{HawkingEllis1973}, \cite{ChruscielCosta2008}.
He showed that the event horizon necessarily is a Killing horizon with the corresponding Killing vector field defined on the entire domain of outer communication, showing that the spacetime is axisymmetric.
Hawking's proof heavily relies on the assumption that the spacetime metric is analytic and does not extend to smooth spacetimes.

Alexakis, Ionescu and Klainerman have proven that \emph{smooth} stationary asymptotically flat vacuum black holes with bifurcate event horizons are Killing horizons, i.e.\ there exists a Killing vector field in an open neighbourhood of the event horizon, see \cite{AIK2010}*{Thm. 1.1} (applied to stationary black holes) and the related \cite{IonescuKlainerman2013}.
Their result generalises Hawking's result to \emph{smooth} (as opposed to analytic) spacetime metrics and is therefore referred to as \emph{Hawking's local rigidity without analyticity}.
Our main result can in fact be applied to reprove Hawking's local rigidity for \emph{smooth} stationary asymptotically flat vacuum black hole spacetimes, with event horizons with non-zero constant surface gravity, c.f.\ Figure \ref{fig: EH to CH}.
(Recall that bifurcate event horizons automatically have constant non-zero surface gravity, \cite{IK2009}*{p. 38}.)
As a special case of our result, we therefore get an alternative proof of the result by Alexakis-Ionescu-Klainerman, see \textbf{Theorem \ref{thm: Killing Extension bh}} below.
The main difference is that our proof does \emph{not} rely on the existence of a bifurcation surface. 
We extend the Killing vector field from either the future \emph{or} the past event horizon, not from both.

Let us remark that the result by Alexakis-Ionescu-Klainerman cannot be applied to prove our Theorem \ref{thm: Killing Horizon main} (that compact Cauchy horizons with constant non-zero surface gravity in vacuum spacetimes are Killing horizons).
The reason is that it is not known (in fact, it is a highly non-trivial open question) whether any such compact Cauchy horizon can be lifted to the future or past part of a bifurcate lightlike hypersurface in a covering vacuum spacetime. 
We avoid this issue by proving the unique continuation statement directly for compact Cauchy horizons.

Note that neither our result nor the result by Alexakis-Ionescu-Klainerman proves that the Killing vector field extends to the full domain of outer communication, in general.
Only for small perturbations of the Kerr spacetimes have Alexakis-Ionescu-Klainerman proven that the Killing vector field extends to the full domain of outer communication, see \cite{AIK2010_2}, \cite{AIK2014} and the related \cite{IK2009}, \cite{IK2009_2}.

Before we proceed by presenting the precise formulation of the main results, let us remark that all known examples of compact Cauchy horizons in vacuum spacetimes have constant non-zero surface gravity.
It is conceivable (and widely believed) that this is the case for any compact Cauchy horizon in a vacuum spacetime, see \cite{HIW2007} and \cite{RB2021} for partial progress on this problem. 
This is however still a rather subtle open question. 
In case the spacetime metric is analytic on the other hand, Moncrief and Isenberg have shown in their series of works that the surface gravity can, under general assumptions, be normalised to a constant.
In some special cases, they have even been able to prove that this constant must indeed be non-zero.

\subsection{Main results} \label{subsec: main results}

Let $M$ be a spacetime, i.e.\ a time-oriented connected Lorentzian manifold, of dimension $n+1 \geq 2$.
Let $\S$ denote a closed acausal topological hypersurface in $M$.
We assume that $\S$ has no boundary, but we do not assume $\S$ to be compact.
It can then be shown that $D(\S) \subset M$ is an open globally hyperbolic submanifold, with Cauchy hypersurface $\S$, and
\[
	\partial D(\S) = \H_- \cup \H_+,
\]
where $\H_- \cap \H_+ = \emptyset$ and $\H_\pm \subset I_\pm(\S)$, see \cite{O'Neill1983}*{Prop. 14.53}.
\begin{definition}[Cauchy horizon]
We define $\H_+$ and $\H_-$ to be the future and past Cauchy horizon of $\S$, respectively.
\end{definition}

We are from now on going to let $\H$ denote the future \emph{or} the past Cauchy horizon of $\S$.
The following recently proven theorem is very useful for our purposes:

\begin{thm}[\cite{Larsson2015}*{Cor.\ 1.43}, \cite{Minguzzi2015}*{Thm.\ 18}] \label{thm: smoothness Cauchy horizon}
Let $M$ and $\S$ be as above. 
Assume that $\H$ is a compact Cauchy horizon of $\S$ and that $(M, g)$ satisfies the null energy condition, i.e.\ that
\[
	\Ric(L, L) \geq 0
\]
for all lightlike vectors $L \in TM$.
Then $\H$ is a smooth, totally geodesic and lightlike hypersurface.
\end{thm}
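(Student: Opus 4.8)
The plan is to exploit the classical fact that any Cauchy horizon is ruled by lightlike geodesic generators, and to upgrade the a priori merely Lipschitz regularity to smoothness by ruling out endpoints of these generators, using the null energy condition together with compactness. First I would recall from causality theory (e.g.\ \cite{O'Neill1983}) that $\H$ is a closed achronal Lipschitz hypersurface ruled by lightlike geodesics: through each point runs a generator, which for $\H = \H_+$ is past-directed and past-inextendible in $\H$, the case $\H = \H_-$ being symmetric. Moreover $\H$ fails to be differentiable at $p$ precisely when more than one generator passes through $p$, i.e.\ when $p$ is a future endpoint of a generator.

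The heart of the argument is a focusing estimate. Parametrising the generators by a future-directed affine parameter and letting $\theta$ and $\sigma$ denote the expansion and shear of the generator congruence, the Raychaudhuri equation reads
\[
	L(\theta) = -\tfrac{1}{n-1}\theta^2 - \abs{\sigma}^2 - \Ric(L,L).
\]
The null energy condition forces $\Ric(L,L) \geq 0$, so $L(\theta) \leq -\tfrac{1}{n-1}\theta^2 \leq 0$ wherever $\theta$ is defined. I would first argue, using compactness of $\H$, that the past-inextendible generators are in fact past-complete, having infinite past affine length while remaining in the compact set $\H$. Combined with the inequality above, this rules out $\theta > 0$ at any point: a positive value of $\theta$ would, upon integrating backwards, blow up to $+\infty$ in finite affine parameter, forcing a past endpoint and contradicting past-completeness. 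Hence $\theta \leq 0$ everywhere on the differentiable part of $\H$.

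The decisive step is to upgrade this to $\theta \equiv 0$. Here I would use that the generators define a flow on the compact hypersurface $\H$ and invoke a recurrence argument: since $\theta \leq 0$, a transverse area element is non-increasing along each future-directed generator, yet Poincaré recurrence of the generator flow on the compact set forces almost every generator to return arbitrarily close to its starting configuration, which is incompatible with a strict decrease of the area element. Thus $\theta = 0$ along a dense set of generators and, by continuity, $\theta \equiv 0$. Feeding this back into the Raychaudhuri equation gives $\abs{\sigma}^2 = 0$ and $\Ric(L,L) = 0$, so the lightlike second fundamental form of $\H$ vanishes identically; that is, $\H$ is totally geodesic.

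Finally, the vanishing of expansion and shear means there is no focusing, so no generator can have a future endpoint either. Consequently exactly one generator passes through each point, the non-differentiable set is empty, and $\H$ is an embedded lightlike hypersurface whose smoothness follows from that of the lightlike geodesic flow. The main obstacle I anticipate is the pair of global arguments that cannot be carried out generator-by-generator: establishing past-completeness of the generators inside the compact horizon, a completeness statement that must contend with possibly recurrent or \enquote{ergodic} generator behaviour, and the recurrence argument promoting $\theta \leq 0$ to $\theta \equiv 0$, which is where compactness is genuinely indispensable.
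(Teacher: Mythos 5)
This theorem is not actually proven in the paper: it is imported wholesale from \cite{Larsson2015}*{Cor. 1.43} and \cite{Minguzzi2015}*{Thm. 18}, precisely because its proof is long and delicate. What your outline parallels is the much easier argument the paper gives inside the proof of Proposition \ref{prop: null time function}, where the horizon is \emph{already assumed smooth} (Assumption \ref{ass: N compact}) and the surface gravity is a non-zero constant: there one gets past-completeness of the generators, $\theta \leq 0$ from \cite{Galloway2001}*{Prop. 3.2}, $\theta \equiv 0$ from a volume argument, and then total geodesy from Raychaudhuri. As a proof of the theorem as stated, your outline has a fundamental circularity: the expansion $\theta$, the shear, the Raychaudhuri equation and the generator flow are classically defined only where $\H$ has $C^2$ (or at least $C^1$) regularity, whereas a priori a Cauchy horizon is merely Lipschitz. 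Making the focusing and area arguments work at that regularity, and then bootstrapping to $C^\infty$, is exactly the content of the two cited papers (building on the low-regularity area theorem of \cite{CDGH2001}); it cannot be used as input to establish the smoothness it presupposes. Your final step also contains two non sequiturs: vanishing expansion and shear do not by themselves exclude future endpoints, since two distinct generators can cross at a point without any focal point along either; and even granted that each point lies on exactly one generator, the classical differentiability results for horizons yield only $C^1$, not smoothness --- the passage from $C^1$ to $C^\infty$ is the core analytic achievement of Larsson and Minguzzi, not a consequence of ``smoothness of the geodesic flow.''

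Two intermediate steps are also invalid as written. First, past-completeness: the claim that a geodesic has infinite affine length ``while remaining in the compact set $\H$'' is false in Lorentzian geometry --- compact Lorentzian manifolds can be null-geodesically incomplete (the closed lightlike curves just beyond the Misner horizon in Example \ref{ex: Misner} are of this kind). In the paper's smooth setting the completeness of the generators in the relevant direction is derived from the \emph{constant non-zero surface gravity}, via the exponential reparametrisation coming from $\n_V V = \kappa V$; Theorem \ref{thm: smoothness Cauchy horizon} makes no such assumption, so this route is unavailable. Second, Poincar\'e recurrence requires a finite \emph{invariant} measure for the generator flow, and no such measure is known a priori --- indeed the existence of an invariant transverse measure is essentially equivalent to $\theta \equiv 0$, which is what you are trying to prove, so this step is circular; topological (Birkhoff) recurrence gives some recurrent points but not a dense set of them, so ``$\theta = 0$ on a dense set of generators'' does not follow. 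The clean replacement, used in the paper's Proposition \ref{prop: null time function} via \cite{Larsson2015}*{Lem. 1.3}, is to choose a Riemannian metric $\sigma$ on the horizon with $\L_V d\mu_\sigma = -\theta\, d\mu_\sigma$ and compare total volumes: the flow is a diffeomorphism of the compact horizon, so the total $\sigma$-volume is preserved under pullback, while $\theta \leq 0$ makes it non-decreasing pointwise, forcing $\theta \equiv 0$ with no recurrence argument at all.
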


In the theorems below, we will always assume that the null energy condition is satisfied.
We may therefore from now on assume that $\H$ is a smooth, compact and lightlike hypersurface.
Since $M$ is time-oriented, there is a nowhere vanishing timelike vector field $T$ on $M$.
Since $\H$ is a lightlike hypersurface, $T|_\H$ is transversal to $\H$, so $\H$ is two-sided.
Moreover, there is a smooth one-form $\b$ such that ${\b(T)|_\H \neq 0}$ and $\b(X) = 0$ for all $X \in T\H$.
It follows that $V := \b^\sharp|_\H$ is a nowhere vanishing vector field normal to $T\H$.
Since $\H$ is lightlike, $V$ must be lightlike and tangent to $\H$.
One checks that any such lightlike vector field satisfies
\[
	\n_V V = \kappa V
\]
for a smooth function $\kappa$ on $\H$.
The function $\kappa$ is called \emph{surface gravity} of $\H$ with respect to $V$.
Note that $V$ is not canonical and the surface gravity depends on our choice of $V$.

\begin{definition} \label{def: constant surface gravity}
We say that the surface gravity of $\H$ can be normalised to a non-zero constant if there is a nowhere vanishing lightlike vector field $V$, tangent to $\H$, such that
\[
	\n_V V = \kappa V
\]
on $\H$, for some \textbf{constant} $\kappa \neq 0$. 
\end{definition}

\subsubsection{Compact Killing horizons}

Our first main result says that all compact Cauchy horizons with constant non-zero surface gravity in vacuum spacetimes are smooth Killing horizons:

\begin{thm}[Killing horizon] \label{thm: Killing Horizon main}
Let $M$ and $\S$ be as above. 
Assume that $M$ is a vacuum spacetime, i.e.\ $\Ric = 0$, and that $\H$ is a compact Cauchy horizon of $\S$, with surface gravity that can be normalised to a non-zero constant.
Then $\H$ is a smooth Killing horizon.
More precisely, there is an open subset $\O \subset M$, containing $\H$ and $D(\S)$, and a unique smooth Killing vector field $W$ on $\O$ such that
\[
	W|_\H = V,
\]
where $V$ is as in Definition \ref{def: constant surface gravity}.
Moreover, $W$ is spacelike in $D(\S)$ close to $\H$, lightlike on $\H$ and timelike on $\O \backslash (D(\S) \cup \H)$ close to $\H$. 
\end{thm}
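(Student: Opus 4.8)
The plan is to reduce the existence of the Killing extension $W$ to the new unique continuation theorem (Theorem \ref{thm: unique continuation}) combined with the result already available on the globally hyperbolic side. By \cite{PetersenRacz2018}*{Thm. 1.2}, there is a Killing vector field $W$ defined on $D(\S)$ with $W|_\H = V$ in a suitable limiting sense; the task is to extend $W$ across $\H$ into the region $U \setminus \overline{D(\S)}$ and to verify the Killing equation there. First I would set up a candidate extension. The natural way to produce $W$ beyond $\H$ is to solve a linear wave equation: since $M$ is vacuum, a Killing field satisfies the covariant wave equation $\Box W + \Ric(W) = 0$, i.e.\ $\Box W = 0$ here, and this wave equation admits a (non-local) solution in a neighbourhood of the compact horizon $\H$ using the methods of \cite{Petersen2018}. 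One then defines $W$ on a full neighbourhood $U$ as the unique solution to this wave equation with the appropriate data induced by $V$ along $\H$, so that $W$ agrees with the field from \cite{PetersenRacz2018} on $D(\S)$ and extends it smoothly beyond the horizon.

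The core of the argument is then to show that this solution $W$ of the wave equation is actually a Killing field, i.e.\ that the deformation tensor
\[
	K := \L_W g = \n W + (\n W)^{\mathsf{T}}
\]
vanishes identically on $U$. The standard strategy here, as in Ionescu-Klainerman, is to derive a closed system of wave (and transport) equations satisfied by the pair $(K, \n W)$ using the vacuum Bianchi identities and the fact that $W$ solves $\Box W = 0$; schematically $\Box K = \mathcal{Q}(K, \n K)$ where $\mathcal{Q}$ is linear in its arguments with smooth coefficients. Since $W$ is already Killing on the open set $D(\S)$, the tensor $K$ vanishes on $D(\S)$, and therefore $K$ together with all its derivatives vanishes along $\H$ from the $D(\S)$-side; by smoothness of $W$ up to and across $\H$ this forces $K$ to vanish to infinite order everywhere along the compact lightlike hypersurface $\H$. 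This is precisely the hypothesis required to invoke the unique continuation theorem.

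The main obstacle, and the step where the real content lies, is the unique continuation itself: concluding from ``$K$ vanishes to infinite order along the compact lightlike $\H$ with constant non-zero surface gravity, and satisfies a wave-type system'' that $K \equiv 0$ on a neighbourhood of $\H$. Ordinary (local) Carleman estimates fail through characteristic hypersurfaces, and indeed Example \ref{ex: van surf grav} shows unique continuation genuinely fails without the non-vanishing surface gravity assumption; this is why the problem is genuinely global along $\H$ and requires the novel Carleman estimate underlying Theorem \ref{thm: unique continuation}. I would apply that theorem componentwise to the system for $K$ (after arranging it into the scalar wave-equation-with-first-order-terms form the theorem admits, using the dominant energy condition supplied by $\Ric = 0$), obtaining $K \equiv 0$ on an open neighbourhood of $\H$ and hence that $W$ is Killing on a neighbourhood $U$ of $\H \cup D(\S)$.

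Finally I would settle the causal character of $W$ and uniqueness. Uniqueness follows because any two Killing fields agreeing on the open set $D(\S)$ must agree wherever both are defined, by the unique continuation (or by standard rigidity of Killing fields on connected sets). For the causal type, the function $g(W, W)$ is constant along the flow of $W$ and vanishes on $\H$ since $W|_\H = V$ is lightlike; computing the transverse derivative of $g(W, W)$ at $\H$ in terms of the surface gravity $\kappa \neq 0$ and the totally geodesic, lightlike structure from Theorem \ref{thm: smoothness Cauchy horizon} shows this derivative is nonzero with a definite sign. Hence $g(W, W)$ changes sign transversally across $\H$, giving $W$ spacelike on the $D(\S)$-side and timelike on the opposite side near $\H$, as claimed.
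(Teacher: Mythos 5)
There is a genuine gap, and it sits exactly at the step the whole paper is designed to address: the construction of the candidate extension beyond $\H$. You define $W$ on the far side of the horizon as "the solution" of $\Box W = 0$ obtained "using the methods of \cite{Petersen2018}", but those methods give existence only on the globally hyperbolic side $D(\S)$. Beyond a compact Cauchy horizon the spacetime contains closed causal curves, and no existence theory for wave equations is available there; the paper explicitly emphasises that unique continuation holds beyond $\H$ \emph{though existence may not hold}. This failure propagates through your argument twice: first, you have no candidate field $W$ beyond $\H$; second, the closed system $\Box K = \mathcal{Q}(K, \n K)$ for the deformation tensor $K = \L_W g$ is itself derived from the identity $\Box W = 0$ together with $\Ric = 0$ (this is the content of \cite{PetersenRacz2018}*{Lem. 2.3}), so without a wave equation satisfied by the extension you also have no PDE system to which the unique continuation theorem could be applied. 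The paper points this out in the strategy section: since the wave system for $\L_W g$ relied on $\Box W = 0$, it cannot be used beyond the horizon.

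The paper's resolution is different in substance, not just in packaging. The extension is defined by solving a linear \emph{ODE}, the Jacobi-type equation $\n_t \n_t W - R(\d_t, W)\d_t = 0$ along the null geodesics transverse to $\H$, with data induced by the Killing field $\tilde Z$ on the inside region; ODEs require no causal structure and can always be solved across the horizon. One then introduces an auxiliary two-form $\o$ by a transport equation and forms the Ionescu--Klainerman tensors $B, P, T$ of \cite{IonescuKlainerman2013}*{Def. 2.3}, which by \cite{IonescuKlainerman2013}*{Prop. 2.10} satisfy a homogeneous system of linear \emph{wave equations coupled to transport equations}. This coupling is precisely why Theorem \ref{thm: unique continuation} is stated for a pair $(u_1, u_2)$ with a wave part and a transport part rather than for a single wave equation; applying it with $u_1 = T$ and $u_2 = (B, \n B, P, \n P)$ gives $B = 0$, hence $\L_W g = 0$ since $\L_W g$ is symmetric and $\o$ antisymmetric. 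Two smaller points: your uniqueness argument assumes the competing Killing fields agree on the open set $D(\S)$, but the hypothesis only gives agreement with $V$ on the hypersurface $\H$; one must first upgrade this to agreement of the full $1$-jet along $\H$ using the Killing equation, then propagate by the ODE and by the uniqueness part of \cite{Petersen2018}*{Thm. 1.6}. Your causal-character argument, by contrast, is sound in spirit and matches the paper's computation $g(V,V) = -t + \f t^2$ from Proposition \ref{prop: metric in time function}.
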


The construction of the Killing vector field $W$ will rely on a certain null time function, see Proposition \ref{prop: null time function}.
Let us briefly explain the construction of the null time function here.
We prove in Proposition \ref{prop: null time function} that there is a unique lightlike transversal vector field $L$ along $\H$ such that\begin{align*}
	g(L, V)|_{\H}
		&= 1, \\*
	g(L, X)|_{\H}
		&= 0,
\end{align*}
for all $X \in T\H$ such that $\n_X V = 0$.
We will throughout the paper let $\d_t$ denote the geodesic vector field such that $\d_t|_\H = L$.
The coordinate obtained by flowing $\H$ along $\d_t$ (in a small open neighbourhood $\U$), will be called the \emph{null time function}
\[
	t: \U \subset M \to (-\e, \e).
\]
Our null time function was first constructed in \cite{Petersen2018} and is of central importance for understanding the geometry close to $\H$.
The Killing vector field $W$ has an explicit construction close to $\H$, in terms of the null time function:

\begin{thm} \label{thm: Killing close to horizon}
On an open neighbourhood $\U$ of the Cauchy horizon $\H$, the Killing vector field $W$ in Theorem \ref{thm: Killing Horizon main} is the unique solution to the following transport equation:
\begin{align*}
	[W, \d_t]
		&= 0, \\
	W|_{\H}
		&= V.
\end{align*}
\end{thm}

\noindent
The main work in this paper consists in showing that $W$ is a Killing vector field.
Theorems \ref{thm: Killing Horizon main} and \ref{thm: Killing close to horizon} are proven in Section \ref{sec: extension of KVFs}.
Let us compare Theorem \ref{thm: Killing Horizon main} and Theorem \ref{thm: Killing close to horizon} with the simplest example possible:

\begin{example}[The Misner spacetime] \label{ex: Misner}
Let
\[
	M = \R \times S^1, \quad g = 2dtdx - tdx^2,
\]
where $t$ and $x$ are the coordinates on $\R$ and $S^1 := \R/\Z$, respectively.
Choosing $\S := \{-1\} \times S^1$, we see that $\H := \{0\} \times S^1$ is the future Cauchy horizon and $D(\S) = (-\infty, 0) \times S^1$.
For an illustration of the light cones and different regions, see Figure \ref{fig: Regions_CauchyHorizon}.
With $V := \d_x|_\H$, the surface gravity is given by $\kappa = \frac12$, i.e.\
\[
	\n_V V = \frac12 V.
\]
Theorem \ref{thm: Killing Horizon main} therefore applies.
Indeed, in this case we have the global Killing vector field
\[
	W = \d_x
\]
and $\U = \O = M$.
The vector field $\d_x$ is spacelike on $D(\S)$, lightlike on $\H$ and timelike on $M \backslash (D(\S) \cup \H) = (0, \infty) \times S^1$.
The coordinate $t: M \to \R$ is the null time coordinate as above, with $L = \d_t|_\H$ and $[W, \d_t] = [\d_x, \d_t] = 0$.
\end{example}

For further examples, including the Taub-NUT spacetime, and general remarks on spacetimes with compact Cauchy horizons with constant non-zero surface gravity, we refer the reader to \cite{Petersen2018}*{Sec.\ 2}.

\subsubsection{Extension of other Killing vector fields}
There might of course exist more Killing vector fields on the globally hyperbolic side of the Cauchy horizon, which extend smoothly up to the Cauchy horizon.
Our second main result says in particular that all such Killing vector fields extend beyond the Cauchy horizon:

\begin{thm}[Extension of Killing vector fields] \label{thm: Killing Extension main}
Let $M$ and $\S$ be as above. 
Assume that $M$ is a vacuum spacetime, i.e.\ $\Ric = 0$, and that $\H$ is a compact Cauchy horizon of $\S$, with surface gravity that can be normalised to a non-zero constant.
Then there is an open neighbourhood $\O$, containing $\H$ and $D(\S)$, such that if a smooth vector field $Y$ satisfies
\begin{equation} \label{eq: Killing to inf order}
	\n^m \L_Yg|_\H
		= 0,
\end{equation}
for all $m \in \N_0$, then there is a unique Killing vector field $Z$ on $\O$ such that
\[
	\n^m Z|_\H = \n^m Y|_\H,
\]
for all $m \in \N_0$.
\end{thm}

\noindent 
The notation $\n^m a|_\H = 0$ for all $m \in \N_0$ means that the tensor $a$ and all its transversal derivatives vanish at $\H$.
Theorem \ref{thm: Killing Horizon main} will in fact be proven by combining Theorem \ref{thm: Killing Extension main} and \cite{PetersenRacz2018}*{Thm.\ 2.1}.

An interesting result by Isenberg and Moncrief in \cite{IM1992} states that if at least one of the orbits of the Killing vector field $W$ \emph{does not close}, then there is a \emph{second} Killing vector field $Z$ exists on the globally hyperbolic region.
We combine this observation with Theorem \ref{thm: Killing Extension main} and obtain the following corollary:

\begin{cor} \label{cor: second Killing}
Assume that at least one of the generators (integral curves of $V$) is non-closed and that $D(\S)$ is maximal globally hyperbolic.
Then there is an open subset $\O$, containing $\H$ and $D(\S)$, and a \emph{second} Killing vector field $Z$ (different from $W$) on $\O$, leaving the null time function invariant, i.e.\
\[
	[Z, \d_t] = 0,
\]
on the open subset $\U$, where $\d_t$ is defined.
In fact, the isometry group of $\U$ must have an $S^1\times S^1$ subgroup leaving the null time function invariant.
\end{cor}

Theorem \ref{thm: Killing Extension main} and Corollary \ref{cor: second Killing} are proven in Section \ref{sec: extension of KVFs}.

\subsubsection{Unique continuation for wave equations}
The main ingredient in proving Theorem \ref{thm: Killing Horizon main} and Theorem \ref{thm: Killing Extension main} is a new unique continuation theorem for wave equations coupled to transport equations.
The precise formulation is postponed to Theorem \ref{thm: unique continuation}, since we need to introduce more structure.
Let us therefore simply present here the statement for wave equations without coupling to transport equations.

\begin{definition}
Let $F \to M$ be a real or complex vector bundle.
A \emph{wave operator} is a linear second order differential operator acting on sections of $F$ with principal symbol given by the metric, i.e.\ it can locally be expressed as
\[
	\sum_{\a, \b = 0}^n -g^{\a\b}\on^2_{e_\a, e_\b} + l.o.t.,
\]
where $(e_0, \hdots, e_n)$ is a local frame and $\on$ is a connection on $F$.
\end{definition}

\noindent
Let from now on $P$ be a wave operator acting on sections of a real or complex vector bundle $F \to M$.
We will also assume that the dominant energy condition is satisfied:
\begin{definition}
A spacetime $(M,g)$ is said to satisfy the \emph{dominant energy condition} if the stress energy tensor $T := \Ric - \frac12 \mathrm S g$ satisfies the following: For any future pointing causal vector $X$, the vector $-T(X, \cdot)^\sharp$ is future pointing causal (or zero).
\end{definition}

\noindent Note that 
\[
	\mathrm{vacuum} \Rightarrow \mathrm{dominant \ energy \ condition} \Rightarrow \mathrm{null \ energy \ condition}.
\] 
Our main unique continuation theorem for wave equations coupled to transport equations is Theorem \ref{thm: unique continuation}, which has the following important special case:

\begin{thm}[Wave equations] \label{thm: Wave main}
Let $M$ and $\S$ be as above. 
Assume that $(M,g)$ satisfies the dominant energy condition and that $\H$ is a compact Cauchy horizon of $\S$, with surface gravity that can be normalised to a non-zero constant.
Then there is an open neighbourhood $\O$, containing $\H$ and $D(\S)$, such that if $u \in C^\infty(\O, F)$ satisfies
\begin{align*}
	P u 
		&= 0 \text{ on } \O, \\*
	\on^m u|_\H
		&= 0
\end{align*}
for all $m \in \N_0$, then 
\[
	u|_\O = 0.
\]
\end{thm}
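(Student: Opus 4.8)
The plan is to prove Theorem \ref{thm: Wave main} by establishing a Carleman estimate adapted to the characteristic hypersurface $\H$ and then running the standard weighted-energy argument. First I would fix good coordinates near $\H$: using compactness and the constant non-zero surface gravity $\kappa$, I would introduce a defining function $r$ for $\H$ (with $r = 0$ on $\H$, $r > 0$ on the globally hyperbolic side $D(\S)$ and $r < 0$ beyond), together with Gaussian-null-type coordinates $(r, v, x)$ in which the generators of $\H$ appear as the integral curves of a rescaling of $V$ and the metric degenerates along $\H$ in the expected way. The constant surface gravity is precisely what makes the transverse null coordinate behave like the boost coordinate of the Misner model (Example \ref{ex: Misner}), so that $\d_r$ and the generator vector field satisfy a clean commutation relation with error controlled by $r$.

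Then comes the crux: constructing a weight $\psi$ and proving
\[
	\tau \int_U e^{2\tau\psi}\big(\tau^2 \abs{u}^2 + \abs{\n u}^2\big)\, dV \;\leq\; C \int_U e^{2\tau\psi}\abs{Pu}^2\, dV
\]
for all $\tau \geq \tau_0$ and all $u$ supported in a small neighbourhood of $\H$. I expect this to be the main obstacle and the step where the hypotheses are used essentially. The difficulty is that $\H$ is characteristic: at null conormals the pointwise (sub)pseudoconvexity of Hörmander's theory degenerates, so no weight whose level sets approximate $\H$ can be pointwise pseudoconvex, and a purely local Carleman estimate is impossible — consistent with the failure of unique continuation for vanishing surface gravity (Example \ref{ex: van surf grav}). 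The resolution is to make the estimate non-local. I would conjugate $P$ by $e^{\tau\psi}$, split the conjugated operator into its symmetric and antisymmetric parts, and — rather than demanding a pointwise sign on the resulting commutator — integrate the commutator term along the \emph{compact} generators of $\H$. The constant non-zero surface gravity then supplies, after this integration, a coercive lower bound (a spectral gap in the mean along each generator orbit) for the otherwise-indefinite quadratic form, while the dominant energy condition fixes the sign of the zeroth-order curvature contributions coming from the Weitzenböck and commutator terms. Compactness of $\H$ is what allows one to pass from these integrated positivity statements to a uniform constant $C$ and a uniform threshold $\tau_0$.

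Finally, with the Carleman estimate in hand, the conclusion is routine. I would choose a cutoff $\chi$ equal to $1$ on a slightly smaller neighbourhood of $\H$ and supported in $U$, and apply the estimate to $\chi u$. Since $\n^m u|_\H = 0$ for all $m \in \N_0$, the section $u$ vanishes to infinite order along $\{r = 0\}$, so the commutator error $[P, \chi]\, u$ is supported in the region where $r$ is bounded away from $0$, i.e.\ where $\psi$ is strictly smaller than its values on the core neighbourhood. Letting $\tau \to \infty$, the exponential weight forces the integral of $\tau^2\abs{u}^2 + \abs{\n u}^2$ over the core region to be dominated by the exponentially smaller contribution from $\supp \n\chi$, which is only possible if $u$ vanishes on the core region. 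Because $\psi$ can be arranged so that its level sets foliate a full two-sided neighbourhood of $\H$, this argument applies on both sides at once, yielding $u|_U = 0$.
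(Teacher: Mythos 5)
There is a genuine gap, and it sits exactly at the step you flag as the crux. Your final cutoff-and-$\tau\to\infty$ argument forces the weight $\psi$ to attain its maximum \emph{on} $\H$ (the errors from $[P,\chi]u$, supported away from $\H$, must carry strictly smaller weight than the core region). But a Carleman estimate with a bounded weight peaking in the interior of the domain is \emph{false} for general test sections, which is how you state it: take $P = \Box$ and $u = \chi$, a cutoff equal to $1$ near $\H$; then $P(\chi) = \Box\chi$ is supported in the annulus where $\psi \leq \lambda'$, while the left-hand side is at least $\tau^3 e^{2\tau\lambda}\mathrm{vol}(\{\chi = 1\})$ with $\lambda > \lambda'$, so letting $\tau \to \infty$ gives a contradiction. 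Hence your estimate could at best hold on the subspace of sections vanishing to infinite order at $\H$ — but nothing in your proposed mechanism (conjugation, symmetric/antisymmetric splitting, averaging the commutator along the generators) ever invokes that hypothesis, so the mechanism cannot deliver it. Moreover, the averaging idea attacks the wrong failure mode. With $\psi = f(r)$ bounded, the cubic-in-$\tau$ commutator coefficient is $f'^2\bigl(f''\, g(\grad r, \grad r)^2 + f'\, \Hess(r)(\grad r, \grad r)\bigr)$, and since $g(\grad r, \grad r) = r + O(r^2)$ and $\Hess(r)(\grad r, \grad r) = \tfrac r2 + O(r^2)$ (Propositions \ref{prop: metric in time function} and \ref{prop: Hessian}), this coefficient is $O(r)$: it vanishes \emph{identically} on $\H$ rather than oscillating in sign along the generators, and averaging zero over a compact orbit is still zero — no ``spectral gap in the mean'' can arise. (Your use of the dominant energy condition is also misplaced: in the paper it enters geometrically, yielding $\Ric(V,X) = 0$ along the horizon and hence the metric asymptotics of Lemma \ref{le: components prep}, Proposition \ref{prop: metric in time function} and Corollary \ref{cor: wave on grad(t) over t}; zeroth-order curvature terms are absorbable for a large parameter and never decide a Carleman estimate.)

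The paper's resolution is precisely to make the weight \emph{singular} at the horizon: it conjugates by $t^{-\a}$, i.e.\ $f = -\a\log\abs{t}$ with the power $\a$ as the large parameter. Then $f' = -\a/t$ and $f'' = \a/t^2$, and the same coefficient becomes of order $\a^3/t^2$: the degeneration $g(\grad t, \grad t) = t + O(t^2)$ at the characteristic surface is overcompensated by the blow-up of the weight, producing the coercive terms $\tfrac{\a^2}{t}$ and $\tfrac{\a^3}{t^2}$ of Lemmas \ref{le: first estimates} and \ref{le: commutator}. The hypothesis $\n^m u|_\H = 0$ then enters exactly where your argument never uses it: it is what makes the singularly weighted norms finite (the space $C_*^\infty$ in Theorem \ref{thm: Carleman estimate}), and the conclusion follows by letting $\a \to \infty$ as in Subsection \ref{subsec: proof unique cont}, supplemented by the Hardy-type transport estimate of Proposition \ref{prop: Carleman estimate transport}. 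This is the content of the paper's remark that the weight must be both singular at $t = 0$ and defined along the entire compact horizon, and that H\"ormander-type local estimates do not apply; a bounded exponential weight, however cleverly its commutator is averaged, cannot replace it.
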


\begin{example} \label{ex: Misner wave}
The simplest example to which Theorem \ref{thm: Wave main} applies is the Misner spacetime, Example \ref{ex: Misner}, where
\begin{align*}
	\Box 
		&= - \d_t(t\d_t + 2 \d_x) \\
		&= -\frac1t \left(\d_{\grad(t)}\right)^2 + \frac1t \left(\d_x\right)^2,
\end{align*}
where we used that $\grad(t) = t\d_t + \d_x$.
\end{example}

\begin{remark}
Theorem \ref{thm: Wave main} says, in particular, that one can predict solutions to linear wave equations also \emph{beyond} any compact Cauchy horizon with constant non-zero surface gravity in a spacetime satisfying the dominant energy condition.
\end{remark}

Theorem \ref{thm: Wave main} relies heavily on our assumption that $\kappa \neq 0$.
In fact, in case $\kappa = 0$, then linear waves are not predictable beyond the Cauchy horizon in general:

\begin{example}[Unique continuation fails for vanishing surface gravity] \label{ex: van surf grav}
Consider the spacetimes
\[
	M = \R \times S^1, \quad g = 2dtdx + (- t)^mdx^2
\]
for $m \in \N$.
By Example \ref{ex: Misner}, we know that the assumptions of Theorem \ref{thm: Wave main} are satisfied if $m = 1$.
A simple calculation with $V := \d_x|_\H$ shows that for $m \geq 2$ we have
\[
	\n_V V = 0,
\]
i.e.\ the surface gravity vanishes.
We now show that the conclusion in Theorem \ref{thm: Wave main} actually fails for $m \geq 2$. 
The d'Alembert operator is given by 
\[
	\Box = \d_t((-t)^m \d_t - 2 \d_x).
\]
Again it is easy to see that $\H = \{0\} \times S^1$ is the future Cauchy horizon of $\S := \{-1\} \times S^1$.
Consider the smooth function 
\[
	u(t, x) := \begin{cases} e^{-\frac1t} & t > 0, \\ 0 & t \leq 0.\end{cases}
\]
By construction, $u(t, \cdot) = 0$ for any $t \leq 0$ and $u(t, \cdot) \neq 0$ for any $t > 0$.
Note that
\[
	\Box u + \left((m-2)(-t)^{m-1} - (-t)^{m-2}\right) \d_t u = 0.
\]
If $m \geq 2$, this is a wave equation with smooth coefficients.
We conclude that unique continuation is false in general for compact Cauchy horizons of vanishing surface gravity.
\end{example}

\begin{remark}
It is interesting to note that the spacetimes $(M,g)$ in the previous example are \emph{flat} if and only if $m = 1$, which happens if and only if the surface gravity is non-zero.
As already mentioned, all known examples of compact Cauchy horizons in \emph{vacuum} spacetimes have constant non-zero surface gravity and fulfil the assumptions of Theorem \ref{thm: Killing Horizon main}, Theorem \ref{thm: Killing Extension main} and Theorem \ref{thm: Wave main}.
\end{remark}

A natural question to ask in relation to Theorem \ref{thm: Wave main} is whether it suffices to assume for example that $u|_\H = 0$ in order to conclude that $u|_\O = 0$? 
(This is true for standard characteristic Cauchy problems, c.f.\ \cite{BW2015}*{Thm.\ 23}.)
It turns out that this is false in general, see Example \ref{ex: k derivatives not enough} below.
In fact, one can in principle compute from the \emph{first order} part of the wave operator how many derivatives have to vanish at the horizon in order to conclude that the solution vanishes on an open neighbourhood. 
Let us describe this here.
Assume that $a$ is a symmetric or Hermitian positive definite scalar product on $F$ and fix a compatible connection $\on$, i.e.\
\[
	\on a = 0.
\]
We extend this connection to elements $S \otimes u$, where $S$ is a tensor field and $u$ is a section of $F$ by the product rule
\[
	\on(S \otimes u) := (\n S) \otimes u + S \otimes \on u,
\]
where $\n$ is the Levi-Civita connection with respect to $g$.
This allows us to define higher order derivatives $\on^m$, for instance
\[
	\on^2_{X, Y} u := \on_X \on_Y u - \on_{\n_X Y} u,
\]
for any vector fields $X, Y$ on $M$.
Given any wave operator $P$, we may express it as
\[
	P u = \bBox u+ B(\on u) + A u,
\]
where $B$ and $A$ are smooth homomorphism fields and $\bBox := - \tr_g(\on^2)$.
We have the following corollary of Theorem \ref{thm: Wave main}:
\begin{cor} \label{cor: finite order vanishing}
Let $l$ be the smallest integer such that 
\[
	l \geq \frac{1}{2\kappa}\frac{\Re\Big(a|_\H\big(B(g(V, \cdot) \otimes w), w \big) \Big)}{a(w, w)}
\]
for all $w \in F|_\H$ (by compactness of $\H$, such an $l$ always exists).
If $u \in C^\infty(\O, F)$ satisfies
\begin{align*}
	P u
		&= 0, \\*
	\on^m u|_\H 
		&= 0,
\end{align*}
for all $m \leq l$, then $u|_\O = 0$.
\end{cor}

\noindent 
Corollary \ref{cor: finite order vanishing} is proven in Section \ref{subsec: proof unique cont}.
It says in particular: For each wave operator there is a \emph{finite} order $l$, to which it suffices to assume that the solution vanishes, in order to conclude that it vanishes on an open neighbourhood containing $\H$ and $D(\S)$.
The statement is sharp in the sense that the order $l$, to which one has to assume that the solution vanishes, really depends crucially on the first order coefficient $B$ (the first order terms) of the wave operator:
\begin{example} \label{ex: k derivatives not enough}
Consider the Misner spacetime, Example \ref{ex: Misner}, with d'Alembert operator $\Box  = -\d_t(t\d_t + 2\d_x)$.
Note that for functions, the natural choices are simply
\[
	a(f_1, f_2) := f_1 \overline{f_2}, \quad \on := \d.
\]
In particular $\bBox = \Box$ on functions, which is not necessarily true for $\Box$ on tensors.
For each integer $k \geq 0$, note that
\begin{align*}
	\left(\bBox + (k+1)\d_t\right)t^{k+1}
		&= 0, \\
	\on^m t^{k+1}|_{\H}
		&= 0
\end{align*}
for all $m \leq k$.
In other words, for each integer $k \geq 0$ there is a smooth solution $t^{k+1}$ to a homogeneous wave equation, which is non-trivial for $t \neq 0$ and vanishes up to order $k$ at the Cauchy horizon.
\end{example}

\subsubsection{Local rigidity of stationary black holes}
Let us now explain why Hawking's local rigidity theorem without analyticity, proven by Alexakis-Ionescu-Klainerman in \cite{AIK2010}*{Thm. 1.1}, follows directly from our Theorem \ref{thm: Killing Horizon main}.
In particular, we will explain Figure \ref{fig: EH to CH} in more detail.
We begin by introducing the necessary notions for the definition of stationary black hole spacetimes.

\begin{definition}[Asymptotically flat  hypersurface]
The spacetime $(M,g)$ is said to possess an \emph{asymptotically flat hypersurface} if $M$ contains a spacelike hypersurface $S_\ext$ with a diffeomorphism 
\[
	\varphi :S_\ext \to \R^n \backslash \overline{B(R)},
\]
where $B(R)$ is the open ball of radius $R > 0$, such that the induced first and second fundamental forms $(\gamma, K)$ on $S_{\ext}$ satisfy
\begin{align*}
	(\varphi_* \gamma)_{ij} - \delta_{ij} &\in \mathcal O_k(r^{-\a}), \\
		(\varphi_*K)_{ij} &\in \mathcal O_{k-1}(r^{-1-\a}),
\end{align*}
for some $\a > 0$ and some integer $k > 1$, where $f \in \mathcal O_k(r^{-\a})$ if $\n_{i_1} \hdots \n_{i_l}f \in \mathcal O(r^{-\a-l})$ for all $l \leq k$, where here $\n$ is the induced Riemannian Levi-connection.
\end{definition}

The precise rate of decay is not important for the results here.

\begin{definition} \label{def: stationary spacetime}
We call a spacetime $M$ containing an asymptotically flat hypersurface $S_\ext$ a \emph{stationary asymptotically flat spacetime} if there exists a complete Killing vector field $K$ on $M$ which is timelike along $S_\ext$.
Let $\phi_t: M \to M$ denote the flow of $K$.
We define the \emph{exterior region} as
\[
	 M_\ext := \cup_t \phi_t(S_\ext)
\]
and the \emph{domain of outer communication} as
\[
	\langle\langle M_\ext \rangle\rangle := I^+(M_\ext) \cap I^-(M_\ext).
\]
The \emph{black hole region} is defined as
\[
	\mathcal B := M \backslash I^-(M_\ext)
\]
and the \emph{black hole event horizon} as $\H^+_{bh} := \d \mathcal B$.
Similarly, the \emph{white hole region} is defined as
\[
	\mathcal W := M \backslash I^+(M_\ext)
\]
and the \emph{white hole event horizon} as $\H^-_{bh} := \d \mathcal W$.
\end{definition}
Let us for simplicity of presentation assume that $\mathcal W = \emptyset$, i.e.\ that $M = I^+(M_\ext)$.
In particular, the past event horizon is empty and there is no bifurcation surface.
Some regularity assumption is in order.
We have chosen to follow \cite{ChruscielCosta2008}*{Def.\ 1.1} and restrict, for simplicity, to one asymptotically flat end.
\begin{assumption} \label{ass: black hole}
Let $(M,g)$ be a stationary asymptotically flat vacuum spacetime, i.e.\ $\Ric = 0$, with 
\[
	M = I^+(M_\ext),
\]
where $M_\ext$ is the exterior region as in Definition \ref{def: stationary spacetime}.
Assume that there is a closed spacelike hypersurface $S \supset S_\ext$ in $M$, with boundary $\d S$, such that $S \backslash S_\ext$ is compact and such that $\d \mathcal S$ is a compact cross-section in $\H_{bh}^+$, i.e.\ any generator (lightlike integral curve) of $\H_{bh}^+$ intersects $\d \mathcal S$ precisely once.
Assume also that $\langle \langle M_\ext \rangle \rangle$ is a globally hyperbolic spacetime and that $S$ is achronal in $\langle \langle M_\ext \rangle \rangle$.
\end{assumption}

See \cite{ChruscielCosta2008}*{Figure 1.1} for a nice picture illustrating this assumption. 
Since $\H_{bh}^- = \emptyset$, let us write $\H_{bh} := \H_{bh}^+$.
The following theorem is a special case of \cite{ChruscielCosta2008}*{Thm.\ 4.11}, which is based on \cite{CDGH2001}.
\begin{thm}[\cite{ChruscielCosta2008}*{Thm. 4.11}] \label{thm: smoothness bh event horizons}
If Assumption \ref{ass: black hole} holds, then $\H_{bh}$ is a smooth null hypersurface in $M$.
\end{thm}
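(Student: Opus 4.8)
The plan is to recognise $\H_{bh}$ as an achronal boundary, whose only possible singularities are the focal and crossing points of its null generators, and then to use the interplay between the area theorem and the Killing symmetry to rule these out.

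First I would record the general causal structure. Since $\H_{bh} = \d\mathcal B = \d I^-(M_\ext)$ is the boundary of a past set, classical causality theory (Penrose; Hawking--Ellis) shows that it is a closed, achronal, locally Lipschitz topological hypersurface, ruled by null geodesic \emph{generators}. As $M = I^+(M_\ext)$ this is a future event horizon, so its generators have no future endpoints, and the only points at which $\H_{bh}$ can fail to be differentiable are past endpoints, i.e.\ points lying on more than one generator. At this stage one has, from the semiconvexity of achronal boundaries established in \cite{CDGH2001}, that $\H_{bh}$ is differentiable at every point lying on a single generator and, by Alexandrov's theorem, twice differentiable almost everywhere.

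Next I would bring in stationarity. By construction $M_\ext = \cup_t \phi_t(S_\ext)$ is invariant under the Killing flow $\phi_t$, hence so are $I^-(M_\ext)$, $\mathcal B$, and $\H_{bh}$; in particular $K$ is tangent to $\H_{bh}$ and each $\phi_t$ permutes the generators. Since $\phi_t$ is an isometry it preserves the induced area of spacelike cross-sections of $\H_{bh}$. On the other hand, the area theorem --- valid for merely Lipschitz horizons by \cite{CDGH2001}, and applicable here because $\Ric = 0$ forces $\Ric(L,L) \geq 0$ for all lightlike $L$ --- asserts that the cross-sectional area is non-decreasing towards the future. A quantity that is both $\phi_t$-invariant and monotone must be constant, so the area of cross-sections is constant along the flow.

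The heart of the matter, and the step I expect to be the main obstacle, is converting this area rigidity into genuine smoothness. Constancy of the cross-sectional area forces the null expansion $\theta$ of the generators to vanish identically; inserting $\theta \equiv 0$ together with $\Ric(L,L) \geq 0$ into the Raychaudhuri equation then yields that the shear $\sigma$ and $\Ric(L,L)$ both vanish, so that $\H_{bh}$ is totally geodesic and its generators do not focus. Vanishing expansion also excludes the entry of new generators from the past: a past endpoint would, via Raychaudhuri, produce strictly positive expansion further up the congruence and hence a strict increase of area, contradicting constancy. Therefore exactly one generator passes through each point, so $\H_{bh}$ is an embedded $C^1$ null hypersurface, and the assumption that $\d S$ is a compact cross-section met exactly once by every generator allows me to coordinatise $\H_{bh}$ by $\d S \times \R$ through the geodesic and Killing flows. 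A final bootstrap promotes this to smoothness: the generators solve the smooth geodesic equation, the Killing flow supplies a transverse smooth direction, and the ambient metric is smooth, so the parametrising map is a smooth embedding. The delicate points --- the rigorous area theorem for non-differentiable horizons and the exclusion of past endpoints --- are exactly those supplied by the Chruściel--Delay--Galloway--Howard theory underlying \cite{ChruscielCosta2008}*{Thm. 4.11}.
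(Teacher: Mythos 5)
The paper offers no proof of this statement at all: it is imported verbatim from Chru\'sciel--Costa \cite{ChruscielCosta2008}*{Thm. 4.11}, whose proof (resting on \cite{CDGH2001}) is precisely the route you outline --- achronal-boundary structure of $\d I^-(M_\ext)$, the area theorem for merely Lipschitz horizons, constancy of cross-sectional area forced by the isometric Killing flow, and the rigidity part of that theorem (vanishing Alexandrov divergence, exclusion of past endpoints) to upgrade to smoothness --- so your reconstruction matches the cited argument. The one step you state too quickly is the claim that ``$\phi_t$-invariant plus monotone implies constant'': the area theorem only compares causally ordered sections, so one must first show $\phi_t(\d S) \subset J^+(\d S)$ for suitable $t$, which is not automatic (the stationary Killing field is not parallel to the generators) and is supplied by the structure results for sections of stationary horizons in \cite{ChruscielCosta2008}.
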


Note that this theorem is analogous to Theorem \ref{thm: smoothness Cauchy horizon} for compact Cauchy horizons.
Since we always work with Assumption \ref{ass: black hole}, we may from now on assume that $\H_{bh}$ is smooth.
Similarly as before, there is a nowhere vanishing lightlike vector field $V$, tangent to $\H_{bh}$, such that
\[
	\n_V V = \kappa V.
\]
\begin{definition} \label{def: constant surface gravity event}
We say that the surface gravity of $\H_{bh}$ can be normalised to a non-zero constant if there is a nowhere vanishing lightlike vector field $V$, tangent to $\H_{bh}$, such that
\begin{align*}
	\n_V V 
		&= \kappa V, \\
	[V, K]
		&= 0,
\end{align*}
on $\H_{bh}$, for some \textbf{constant} $\kappa \neq 0$. 
Here $K$ is the Killing vector field from Definition \ref{def: stationary spacetime}.
\end{definition}

Note that this assumption is immediately satisfied if one assumes the existence of a bifurcation surface, see \cite{IK2009}*{p.\ 38}.
We will prove the following version of Hawking's local rigidity theorem for smooth stationary black hole spacetimes:

\begin{thm}[Killing event horizon] \label{thm: Killing Extension bh}
Let $(M, g)$ be a stationary asymptotically flat vacuum spacetime. 
In addition to Assumption \ref{ass: black hole}, assume that the surface gravity of $\H_{bh}$ can be normalised to a non-zero constant.
Then $\H_{bh}$ is a smooth Killing horizon.
More precisely, there exists a Killing vector field $W$, defined on an open neighbourhood $\U$ of $\H_{bh}$, such that
\[
	W|_{\H_{bh}} = V,
\]
where $V$ is as in Definition \ref{def: constant surface gravity event}.
Moreover, the subset $\U \cap \left( \H_{bh} \cup \langle\langle M_\ext \rangle\rangle \right)$ is invariant under the flow of the stationary Killing vector field $K$.
\end{thm}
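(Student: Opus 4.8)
The plan is to deduce Theorem~\ref{thm: Killing Extension bh} from Theorem~\ref{thm: Killing Horizon main} by a quotient construction that turns the non-compact event horizon $\H_{bh}$ into a compact Cauchy horizon. First I would fix $t_0 > 0$ and consider the $\Z$-action generated by the time-$t_0$ map $\phi_{t_0}$ of the stationary flow on a suitable neighbourhood $U_0$ of $\H_{bh}$. Since $M_\ext$ is $K$-invariant, so is $I^-(M_\ext)$, hence its complement $\mathcal B$ and its boundary $\H_{bh} = \partial \mathcal B$; thus $\phi_t$ preserves $\H_{bh}$ and maps null generators to null generators. On a sufficiently small $K$-invariant neighbourhood of $\H_{bh}$, chosen so that $K$ has no zeros there (so that we do not need a bifurcation surface), the $\Z$-action is free and properly discontinuous, and $\tilde M := U_0/\Z$ is a smooth Hausdorff spacetime. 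The image $\tilde\H := \H_{bh}/\Z$ is the mapping torus of the return map of $\phi_{t_0}$ on the compact cross-section $\d S$, and is therefore compact; note that its generators need not close, which is precisely the generality allowed by Theorem~\ref{thm: Killing Horizon main}.

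Next I would check that the hypotheses of Theorem~\ref{thm: Killing Horizon main} descend to $\tilde M$. The quotient map $\pi : U_0 \to \tilde M$ is a local isometry, so $\tilde M$ is again a smooth vacuum spacetime and $\tilde\H$ is a compact lightlike hypersurface with the same local geometry. Using that $S \backslash S_\ext$ is compact and that $S$ is achronal in the globally hyperbolic domain of outer communication, one obtains from $S$ a compact acausal hypersurface $\tilde\S$ in $\tilde M$ whose future Cauchy horizon is exactly $\tilde\H$. Since $\phi_t$ is an isometry mapping $\H_{bh}$ to itself, $\phi_t^* V$ is again a nowhere-vanishing lightlike field tangent to $\H_{bh}$ with the same constant surface gravity $\kappa$; as this property determines the generator field uniquely, $\phi_t^* V = V$, so $V$ and $\kappa$ descend to a field $\tilde V$ on $\tilde\H$ with $\n_{\tilde V}\tilde V = \kappa \tilde V$. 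Therefore Theorem~\ref{thm: Killing Horizon main} applies to $(\tilde M, \tilde\S)$ and yields a Killing vector field $\tilde W$ on an open neighbourhood $\tilde U \supset \tilde\H$ with $\tilde W|_{\tilde\H} = \tilde V$.

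Finally I would lift the result back to $M$. The residual flow of $K$ descends to $\tilde M$ as an $S^1 = \R/t_0\Z$-action preserving the compact set $\tilde\H$; saturating $\tilde U$ under this compact group yields an invariant open neighbourhood $\tilde U' \subset \tilde M$ of $\tilde\H$. Its preimage $U := \pi^{-1}(\tilde U')$ is then an open neighbourhood of $\H_{bh}$ invariant under the full flow $\phi_t$, as required. Because $\pi$ is a local isometry, the pullback $W := \pi^* \tilde W$ is a Killing vector field on $U$, and $W|_{\H_{bh}} = \pi^* \tilde V|_{\H_{bh}} = V$, so $\H_{bh}$ is a smooth Killing horizon.

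The main obstacle is the quotient construction itself: verifying that a neighbourhood of $\H_{bh}$ can be chosen on which the $\Z$-action is free and properly discontinuous with smooth Hausdorff quotient, and that the image of $S$ genuinely realises $\tilde\H$ as the Cauchy horizon of a compact acausal hypersurface. The first point is delicate precisely because we do not assume a bifurcation surface, so $K$ must be controlled near $\H_{bh}$; the second requires carefully matching the boundary $\d S \subset \H_{bh}$ with the Cauchy-horizon framework, in which the acausal hypersurface is disjoint from the horizon. Once this construction is in place, the remaining steps are formal consequences of Theorem~\ref{thm: Killing Horizon main} together with the fact that $\pi$ is a local isometry.
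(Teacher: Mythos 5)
Your overall strategy coincides with the paper's: quotient a $K$-invariant neighbourhood of $\H_{bh}$ on which $K$ is nowhere vanishing by the $\Z$-action generated by the stationary flow, obtain a compact lightlike hypersurface $\H_{bh}/{\sim}$ in a smooth vacuum quotient spacetime, apply Theorem \ref{thm: Killing Horizon main} there, and lift the Killing vector field back to a flow-invariant neighbourhood. The genuine gap is your descent argument for $V$: the claim that constant non-zero surface gravity ``determines the generator field uniquely'', hence $\phi_t^*V = V$, is false on the non-compact horizon. If $\n_V V = \kappa V$ with constant $\kappa \neq 0$, then $V' = fV$ satisfies $\n_{V'}V' = \kappa V'$ for every positive solution $f$ of $\d_V f = \kappa(1-f)$, and along each generator (in the flow parameter $s$ of $V$) these solutions are $f = 1 + Ce^{-\kappa s}$ with $C$ constant on the generator. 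On a \emph{compact} horizon, boundedness of $f$ forces $C = 0$, so uniqueness does hold \emph{downstairs} on the quotient; but on $\H_{bh} \cong \R \times S_0$ there is a genuine family of such fields. Concretely, on the Schwarzschild future event horizon $\{r = 2m\}$ in ingoing Eddington--Finkelstein coordinates (note that $M = I^+(M_\ext)$ excludes the bifurcation sphere), every $V' = (1 + Ce^{-\kappa v})\d_v$ with $C \geq 0$ is a smooth, nowhere vanishing, lightlike field tangent to the horizon with $\n_{V'}V' = \kappa V'$; only $C = 0$ is invariant under $\phi_t$, and for $C > 0$ \emph{no} Killing vector field of a neighbourhood of the horizon restricts to $V'$. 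This shows that neither your step $\phi_t^*V = V$ nor even the conclusion $W|_{\H_{bh}} = V$ can hold for an arbitrary $V$ as in Definition \ref{def: constant surface gravity}: the theorem must be proved (and read) for a normalisation that descends to the quotient. The paper does exactly this, constructing $\hat V$ on the compact quotient horizon and taking $V$ to be its lift; producing such an invariant normalisation is the one point where your proposal needs a genuinely different argument rather than a routine repair.

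The step you single out as the main remaining obstacle --- realising the quotient horizon as the Cauchy horizon of a compact acausal hypersurface built from $S$ --- is not how the paper proceeds, and pushing $S$ into the quotient would indeed be awkward: $S$ is non-compact, its boundary $\d S$ lies \emph{on} $\H_{bh}$, and there is no reason for $S$ to be invariant under the $\Z$-action. The paper avoids this entirely via Corollary \ref{cor: N is Cauchy horizon}: any hypersurface satisfying Assumption \ref{ass: N compact} (compact, lightlike, surface gravity normalisable to a non-zero constant, and $\Ric(V,X) = 0$ for $X \in TN$, which is automatic in vacuum) is the future Cauchy horizon of a nearby closed acausal hypersurface, namely a level set of the null time function. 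Hence Theorem \ref{thm: Killing Horizon main} applies to the quotient with no reference to $S$ at all. A last small slip: to make the lifted neighbourhood invariant under the full flow you should \emph{shrink} $\tilde U$ (intersect its images under the compact group $\R/t_0\Z$, which is again open and contains the horizon), not saturate it by unions, since $\tilde W$ is only defined on $\tilde U$.
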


\begin{remark}
Note that we make no further assumptions neither on the spacetime dimension nor on the topology of the event horizon.
\end{remark}

Essentially the statement of Theorem \ref{thm: Killing Extension bh} is due to Alexakis-Ionescu-Klainerman, by applying \cite{AIK2010}*{Thm. 1.1} to stationary black holes, in spacetime dimension $4$ with spherical cross-section topology.
See also the refined result by Ionescu-Klainerman \cite{IonescuKlainerman2013}*{Thm. 4.1}, for general topology of the cross-section.
It seems reasonable that their proof also goes through in higher dimensions.
The proof of Alexakis-Ionescu-Klainerman relies on the existence of a bifurcation surface, i.e.\ that the future and past event horizons intersect transversally in a smooth surface.
Under this assumption, the authors show that they may normalise the surface gravity to a non-zero constant, c.f.\ also \cite{RaczWald1995} for the (partly) converse statement.

We want to emphasise that our method to prove Theorem \ref{thm: Killing Extension bh} does \emph{not} use the existence of a bifurcation surface.
Our argument is based on the fact that a neighbourhood of the event horizon can be viewed as a covering space over a neighbourhood of a compact Cauchy horizon (this observation was first used in \cite{FRW1999}).
This is what Figure \ref{fig: EH to CH} illustrates.
The event horizon covers the Cauchy horizon and the null time function is covered by a certain \enquote{ingoing/outgoing null coordinate}.
Theorem \ref{thm: Killing Extension bh} then follows as a corollary from Theorem \ref{thm: Killing Horizon main}, by just lifting the Killing vector field using this covering map.
We thus obtain an alternative proof of Hawking's local rigidity theorem for smooth stationary asymptotically flat vacuum black holes, with event horizons of constant surface gravity, relying on a unique continuation theorem which is independent of that by Alexakis-Ionescu-Klainerman.

\subsubsection{Relation to previous literature}Unique continuation for wave equations through lightlike or other types of degenerate hypersurfaces is a classical topic of interest.
The simplest case is uniqueness on (a subset of) the domain of dependence of the lightlike hypersurface. 
The most general such result is due to B\"{a}r and Wafo in \cite{BW2015}*{Thm.\ 23}.
Their proof is based on elementary energy estimates, as opposed to Carleman estimates.

Their result does, however, not apply in many important cases, since the domain of dependence lightlike or otherwise degenerate hypersurfaces of interest are often nothing but the hypersurface itself (as is the case for compact Cauchy horizons).
In the unique continuation theorem by Ionescu-Klainerman in \cite{IK2009_2}, on which their analysis of stationary black holes is based, the unique continuation is proven from a \emph{bifurcate} lightlike hypersurface.
It is interesting to consider in what sense H\"{o}rmander's pseudo-convexity fails in their case (making H\"{o}rmander's unique continuation theorem \cite{Hormander1985}*{Thm.\ 28.3.4} inapplicable).
Ionescu-Klainerman consider a sequence of pseudo-convex hypersurfaces approaching the bifurcate hypersurface and show that the Carleman estimates do not degenerate in the limit, proving the unique continuation.
An important feature of their result, is that the statement is \emph{local} near the bifurcation surface.

Our Theorem \ref{thm: Wave main} is the first unique continuation result for wave equations through a \emph{smooth} lightlike hypersurface (apart from our \cite{Petersen2018}*{Cor.\ 1.8}, which is a one-sided version of this).
It is very different in nature from the theorem of Ionescu-Klainerman, since we need \emph{global} assumptions along the compact lightlike hypersurface, because we do not work with any bifurcation.
We can view our Carleman estimate as a limit of Carleman estimates for pseudo-convex hypersurfaces, see Remark \ref{rmk: pseudo-convex} for a discussion on this.
Our non-degeneracy assumption of non-vanishing surface gravity $\kappa$, ensures that the pseudo-convexity is violated \emph{only to first order} at the compact Cauchy horizon, which is the reason why the Carleman estimate is true.
Indeed, Example \ref{ex: van surf grav} shows that unique continuation, and therefore the Carleman estimates, is false when $\kappa = 0$.

Rather than with the result of Ionescu-Klainerman, our techniques actually have more in common with the proof of unique continuation from infinity for linear waves by Alexakis-Schlue-Shao \cite{ASS2016} and the unique continuation theorems from conformal infinity in asymptotically anti-de Sitter spacetimes by Holzegel-Shao \cite{HS2017} and in asymptotically de Sitter spacetimes by Vasy \cite{Vasy2010}.

In both \cite{ASS2016} and \cite{HS2017}, it is not possible to localize the unique continuation statement, as was possible in the work of Ionescu-Klainerman.
The authors therefore have to be very careful in choosing the sequence of pseudo-convex hypersurfaces in a way that ensures that the pseudo-convexity does not degenerate too fast, in order to obtain Carleman estimates in the limit.
This is very similar to the situation in this paper.

Moreover, in \cite{ASS2016}, \cite{HS2017} and \cite{Vasy2010} one needs to assume high order vanishing in order to get the unique continuation.
This is related to the fact that the Carleman estimates are \emph{singular}, which in turn is related to the fact that the pseudo-convexity degenerates drastically at null infinity and conformal infinity, respectively.
Also this is analogous to the situation in the present paper, where we are forced to use singular Carleman estimates and therefore need to assume higher vanishing of the functions at the horizon in order to conclude unique continuation.
This is not necessary in the problem discussed by Ionescu-Klainerman, where it suffices to assume that the function vanishes and one does not need to assume anything about the derivatives.

Finally, wave equations close to a Cauchy horizon are to a certain extent reminiscent of wave equations of Fuchsian type.
Fuchsian wave equations show up naturally in spacetimes close to the initial big bang singularity, under certain conditions.
Most of the results are done in the analytic setting, see \cite{AnderssonRendall2001} and references therein. 
Some more recent results dropped the assumption of analyticity, see \cite{ABIL2013}, \cite{ABIL2013_2}, \cite{BH2012}, \cite{BH2014}, \cite{Rendall2000}, \cite{BL2010} and \cite{Stahl2002} and references therein. 
See also the recent work by Rodnianski-Speck, where they prove stability towards the singular direction of a Kasner-like singularity \cite{RodnianskiSpeck2018}, \cite{RodnianskiSpeck2018_2}.
The main difference to our work is that the causal structure in these spacetimes is \enquote{silent} close to the singularity, which is very different from the setting in this paper.
Moreover, we neither assume analyticity nor close to symmetry of the spacetime.

\subsection{Strategy of the proofs} \label{subsec: Strategy}

Let us start by recalling how we proved in \cite{PetersenRacz2018} that $W$ (defined here in Theorem \ref{thm: Killing close to horizon}) is a Killing vector field on the globally hyperbolic side of the Cauchy horizon. 
This will clarify the difficulty in showing that $W$ is a Killing vector field beyond the Cauchy horizon.
The first step is to show that $W$ solves the Killing equation up to any order at the Cauchy horizon.
This computation is the main novelty in \cite{PetersenRacz2018}, generalising classical work by Moncrief and Isenberg \cite{MoncriefIsenberg1983}.
A Killing vector field $\widehat W$ (which a posteriori is shown to coincide with $W$) is then constructed on the globally hyperbolic region $D(\S)$ by solving the linear wave equation
\begin{align}
	\Box \widehat W 
		&= 0,	 \label{eq: wave prop} \\
	\n^m \widehat W|_\H
		&= \n^m W|_\H \label{eq: wave prop ID}
\end{align}	
for any $m \in \N_0$.
The solvability of the system (\ref{eq: wave prop}-\ref{eq: wave prop ID}) on $D(\S) \cup \H$ is guaranteed by \cite{Petersen2018}*{Thm.\ 1.6}, in which the author proved that linear wave equations can be solved on $D(\S)$ given initial data on $\H$. 
Using $\Ric = 0$, a direct consequence of (\ref{eq: wave prop}-\ref{eq: wave prop ID}) is that the Lie derivative $\L_{\widehat W} g$ solves the homogeneous wave equation
\begin{align}
	\Box \L_{\widehat W}g - 2 \mathrm{Riem}(\L_{\widehat W}g) 
		&= 0, \label{eq: wave Lie} \\
	\n^m \L_{\widehat W}g|_\H
		&= 0 \nonumber
\end{align}
for any $m \in \N_0$, where $\mathrm{Riem}(\L_{\widehat W}g)$ is a certain linear combination of $\L_{\widehat W}g$ and the curvature tensor.
The uniqueness part of \cite{Petersen2018}*{Thm.\ 1.6} then proves that $\L_{\widehat W}g = 0$ on $D(\S)$, which means that $\widehat W$ is a Killing vector field on $D(\S) \cup \H$.
One finally checks that in fact $[\widehat W, \d_t] = 0$ and therefore $\widehat W = W$, which implies that $W$ is a Killing vector field.

Now, \cite{Petersen2018}*{Thm.\ 1.6} strongly relies on the fact that $D(\S)$ is globally hyperbolic.
It is not at all clear how to solve the wave equation \eqref{eq: wave prop} \emph{beyond} $\H$, since the spacetime contains closed causal curves beyond $\H$.
However, the main result of this paper implies that unique continuation for linear wave equations still holds beyond $\H$, \emph{though existence may not hold}.
If we knew that $\L_Wg$ satisfied a system of linear homogeneous wave equations like \eqref{eq: wave Lie} beyond $\H$, we would therefore conclude that $\L_Wg = 0$, which is what we want to prove.
Since \eqref{eq: wave Lie} relied on \eqref{eq: wave prop}, we cannot use \eqref{eq: wave Lie}.
Remarkably, however, a \emph{closed} such system of linear homogeneous wave equations, coupled to linear transport equations, was recently discovered by Ionescu-Klainerman in \cite{IonescuKlainerman2013}*{Prop. 4.10}.
This means that our unique continuation theorem is enough to prove that $W$ is a Killing vector field, we do \emph{not} need to prove any existence theorem beyond the Cauchy horizon.

We start out in Subsection \ref{subsec: null time function} by recalling the construction of our ``null time function". 
As already mentioned, this is a certain foliation of an open neighbourhood of the Cauchy horizon, which we essentially constructed in our earlier work \cite{Petersen2018}.
The most general form of our unique continuation statement, Theorem \ref{thm: unique continuation}, is formulated in terms of the null time function in Subsection \ref{subsec: unique cont}.

The rest of Section \ref{sec: unique cont} is devoted to the proof of Theorem \ref{thm: unique continuation} and its special case Theorem \ref{thm: Wave main}. 
The main ingredient in the proof is our singular Carleman estimate, Theorem \ref{thm: Carleman estimate}.
Let us briefly introduce the estimate here, the details are in Subsection \ref{subsec: stating Carleman}.
Denoting the null time function $t$, with $\H = t^{-1}(0)$, we consider the conjugate wave operator
\[
	\bBox_\a u := t^{-\a}\bBox(t^\a u),
\]
where $\a$ is any large enough integer and where
\[
	\bBox := - \tr_g\left( \on^2 \right)
\]
is the connection-d'Alembert wave operator.
The goal is to prove the Carleman estimate 
\begin{equation} \label{eq: Carleman prototype}
	\norm{\bBox_\a u}_{L^2} \geq C \norm{u}_{H^1_\a},
\end{equation}
or equivalently
\[
	\norm{t^{-\a} \bBox u}_{L^2} \geq C \norm{t^{-\a} u}_{H^1_\a},
\]
where $\norm{\cdot}_{H^1_\a}$ is a certain Sobolev norm with a weight dependent on $\a$. 
This Carleman estimate is the main analytic novelty in this paper.
The unique continuation theorem easily follows from this.

To illustrate the Carleman estimate, let us present it in the simple special case of the Misner spacetime (Examples \ref{ex: Misner} and \ref{ex: Misner wave}).
Note that for complex functions, we simply have $\bBox = \Box$.
For each $\a \in \N_0$, a straightforward computation shows that
\begin{align*}
	 \norm{\Box_\a u}_{L^2}^2 
	 	&= \norm{\left(\Box - \frac{\a^2}t\right) u}_{L^2}^2 + 4 \a^2 \norm{\frac{\d_{\grad(t)}u}t}^2_{L^2} \\
	 	&\quad + 2\a \left(\norm{\frac{\d_{\grad(t)}u}t}_{L^2}^2 + \norm{\frac{\d_xu}t}_{L^2}^2 + \a^2 \norm{\frac{u}t}_{L^2}^2\right)
\end{align*}
for all $u \in C^\infty_c(M)$, which vanish to infinite order at $\H = t^{-1}(0)$.
From this, one easily deduces a Carleman estimate of the form \eqref{eq: Carleman prototype}.
The surprising observation here is that the estimate is actually an equality, with only positive terms on the right hand side! 
Therefore, this novel Carleman estimate, with a singular weight function $t^\a$, turns out to come very naturally with the Misner spacetime.
The Carleman estimate for general compact Cauchy horizons with non-zero surface gravity will to a certain extent be similar to this, but will of course not be an equality in general.

The first step in proving the Carleman estimate in the general case is to split the conjugate operator $\bBox_\a$ into formally self-adjoint and anti-self-adjoint parts $\bBox_\a^s$ and $\bBox_\a^a$.
Up to lower order terms, we prove that
\begin{align*}
	\bBox_\a^s 
		&\approx \bBox - \frac{\a^2}{t},\\
	\bBox_\a^a
		&\approx - \frac{2\a}t \on_{\grad(t)}.
\end{align*}
By the equality
\begin{align}
	\norm{\bBox_\a u}^2_{L^2} 
		&= \norm{\bBox_\a^s u}^2_{L^2} + \norm{\bBox_\a^a u}^2_{L^2} + \ldr{\bBox_\a^a u, \bBox_\a^s u}_{L^2} + \ldr{\bBox_\a^s u, \bBox_\a^a u}_{L^2} \nonumber \\*
		&= \norm{\bBox_\a^s u}^2_{L^2} + \norm{\bBox_\a^a u}^2_{L^2} + \ldr{(\bBox_\a^s \bBox_\a^a - \bBox_\a^a\bBox_\a^s) u, u}_{L^2}, \label{eq: Box alpha}
\end{align}
it is clear that the crucial term to estimate is
\begin{equation}
	\ldr{[\bBox_\a^s, \bBox_\a^a]u, u}_{L^2} \approx \ldr{\left[ \bBox - \frac{\a^2}{t}, - \frac{2\a}t \on_{\grad(t)}\right]u, u}_{L^2}. \label{eq: commutator}
\end{equation}
One main difficulty is to prove a lower bound for this term close to $\H$, i.e.\ for $t \in (-\e, \e)$, where $\e > 0$ is small.
Surprisingly, it turns out that this can be done without any further assumptions (than the ones made above) concerning the geometry of the Cauchy horizon or the dimension of the spacetime.

The proof is based on determining the asymptotic behaviour of the spacetime metric as $t \to 0$, i.e.\ close to the horizon.
We perform a fine analysis of the asymptotic behaviour of each component of the metric with respect to a suitable frame in Subsection \ref{subsec: properties of t}. 
As one might expect from commuting $\Box$ with $\n_{\grad(t)}$, the Hessian of the null time function also plays an important role.
We prove that the Hessian of the null time function can be computed up to quadratic errors in $t$ as $t \to 0$. 
We then use this to prove the Carleman estimate in Subsection \ref{subsec: proof Carleman}.
Our estimate can easily be coupled to a corresponding one for transport equations.
Using the coupled Carleman estimates with $\a \to \infty$, we prove the unique continuation statement, Theorem \ref{thm: unique continuation}, in Subsection \ref{subsec: proof unique cont}.

There are two important differences to standard Carleman estimates, like Hörmander's classical theorem \cite{Hormander1985}*{Thm.\ 28.3.4}.
The weight function $t^{-\a}$ is \emph{singular} at $t = 0$ and is defined \emph{along the entire hypersurface} $\H$ and not just in a small open subset of $\H$ (which is usually the case when unique continuation is studied).
Since $\H$ is lightlike (characteristic), the argument would fail if the weight function did not satisfy both these properties. 
Indeed, Hörmander's classical unique continuation theorem does not apply to lightlike hypersurfaces.
This makes our argument \enquote{non-local} in this certain sense, a local argument on for example a coordinate patch would not suffice!

Let us now briefly explain how we apply our results to stationary black hole spacetimes.
Any stationary vacuum black hole spacetime with an event horizon with constant non-zero surface gravity can be viewed as a covering space over a vacuum spacetime with a compact Cauchy horizon, this is illustrated in Figure \ref{fig: EH to CH}.
The Cauchy horizon is lifted to the future \emph{or} the past event horizon in the covering black hole spacetime.
Let us explain explicitly how this is done in the simplest example of a black hole spacetime.
The union of the domain of outer communication and the black hole region in the Schwarzschild spacetime can be written in ingoing null coordinates as follows:
\begin{align*}
	\widetilde M 
		&:= \R_{> 0} \times \R \times S^2\\
	\widetilde g 
		&:= 2dv dt - \left(1 - \frac{2m}{t}\right)dv^2 + t^2 g_{S^2},
\end{align*}
were $t$ is the usual \enquote{radial} coordinate on $\R_{> 0}$ (most commonly denoted $r$).
The reason we denote it by $t$ here is that it is exactly the \emph{null time function} to the event horizon $\H_{bh} := \{t = 2m\}$.
Note that $W := \d_v$ is a Killing vector field, which at the event horizon is tangent and lightlike.
Moreover, the flow of $W$ induces a group of isometries of $\widetilde M$, which acts free and proper. 
We may therefore pass to the quotient 
\begin{align*}
	M 
		&:= \R_{> 0} \times S^1 \times S^2\\
	g 
		&:= 2dv dt - \left(1 - \frac{2m}{t}\right)dv^2 + t^2 g_{S^2}.
\end{align*}
The event horizon in $\widetilde M$ becomes in the quotient $M$ a compact future Cauchy horizon 
\[
	\H := \{2m\} \times S^1 \times S^2
\]
(with the convention that $\d_t$ is future directed).
This is schematically illustrated in Figure \ref{fig: EH to CH} (though the Schwarzschild singularity at $t = 0$ is not present in that figure).
We conclude that $\widetilde M$ \emph{covers} $M$ and the (non-compact) event horizon covers the compact Cauchy horizon.
The Killing vector field $W := \d_v$ on $M$ is lifted to the Killing vector field $\widetilde W := \d_v$ on $\widetilde M$.
The proof of Theorem \ref{thm: Killing Extension bh} is based on a generalisation of this construction, applying Theorem \ref{thm: Killing Horizon main} to prove the existence of a Killing vector field on the quotient and then lifting it to the covering black hole spacetime.
Let us emphasise that not every vacuum spacetime with a compact Cauchy horizon can be covered by a black hole spacetime. 
One such example is the classical Taub-NUT spacetime, see e.g.\ \cite{Petersen2018}*{Sec.\ 2}.

To sum up, Theorem \ref{thm: Wave main} and Corollary \ref{cor: finite order vanishing} are proven in Section \ref{sec: unique cont}, Theorems \ref{thm: Killing Horizon main}, \ref{thm: Killing close to horizon} and \ref{thm: Killing Extension main} and Corollary \ref{cor: second Killing} are proven in Subsection \ref{subsec: compact Cauchy} and Theorem \ref{thm: Killing Extension bh} is proven in Subsection \ref{subsec: event horizon}.

\section{The unique continuation theorem} \label{sec: unique cont}

The purpose of this section is to present and prove our unique continuation theorem for linear wave equations coupled to linear transport equations, Theorem \ref{thm: unique continuation}.
Since we want to apply the theory to both Cauchy horizons and event horizons, it will be convenient to prove the theorem for a general compact lightlike hypersurface $N$ of constant non-zero surface gravity.
We do not assume that $N$ is a Cauchy horizon.

\begin{assumption} \label{ass: N compact}
Assume that $N \subset M$ is a non-empty, smooth, compact (without boundary), lightlike hypersurface with surface gravity that can be normalised to a non-zero constant.
Assume moreover that
\[
	\Ric(V, X) = 0
\]
for all $X \in TN$, where $\Ric$ is the Ricci curvature of $M$.
\end{assumption}
\noindent 
Throughout this section, let $N$ satisfy Assumption \ref{ass: N compact}.
We will later apply the results of this section to compact Cauchy horizons with $N = \H$.
Since event horizons of black holes are non-compact, we will first need to take a certain quotient of the event horizon, using the stationary Killing field, and then apply the results with $N = \H_{bh}/{\sim}$.

\subsection{The null time function} \label{subsec: null time function}

In order to formulate the unique continuation theorem, we need to construct a certain foliation of an open neighbourhood of $N$, as briefly described in the introduction.
We follow the strategy we developed in \cite{Petersen2018}*{Prop.\ 3.1}, with slight modifications.
The main difference is that in \cite{Petersen2018}*{Prop.\ 3.1}, the neighbourhood was one-sided, whereas here it will be two-sided.

Recall that $V$ is a nowhere vanishing lightlike vector field tangent to $N$, such that $\n_V V = \kappa V$ for some non-zero constant $\kappa$.
By substituting $V$ by $\frac 1 {2\kappa} V$, we may assume from now on that $\kappa = \frac12$.
We may also without loss of generality choose the time orientation so that $V$ is past directed.

\begin{prop}[The null time function] \label{prop: null time function}
There is an open subset $\U \subset M$ containing $N$ and a unique nowhere vanishing future pointing lightlike vector field $\d_t$ on $\U$, such that
\begin{align*}
	\n_{\d_t}\d_t 
		&= 0, \\
	g(\d_t, V)|_N
		&= 1, \quad
	g(\d_t, X)|_N
		= 0
\end{align*}
for all $X \in TN$ with $\n_XV = 0$, and such that each integral curve of $\d_t$ intersects $N$ precisely once.
Moreover, there is a unique smooth function
\[
	t: \U \to (-\e, \e),
\]
such that
\begin{align*}
	dt(\d_t) 
		&= 1, \\
	t^{-1}(0)
		&= N.
\end{align*}
Shrinking $\U$ and $\e$ if necessary, we get a diffeomorphism between $\U$ and $(-\e, \e) \times N$, where the first component is $t$.
\end{prop}

\begin{definition}
We call the function $t$ given by Proposition \ref{prop: null time function} the \emph{null time function} associated to $N$. 
\end{definition}

The value of $\e > 0$ will be changed a finite number of times throughout Section \ref{sec: unique cont}, without explicitly mentioning it.
Compare Proposition \ref{prop: null time function} with Example \ref{ex: Misner} and Figure \ref{fig: Regions_CauchyHorizon}, where the $t$-coordinate is exactly the null time function.

\begin{proof}
We begin by proving that the null second fundamental form of $N$ vanishes, i.e.\ that $N$ is totally geodesic.
This follows a standard argument.
Since $V$ is a nowhere vanishing vector field, the quotient vector bundle
\[
	TN/{\R V}
\]
is well-defined.
The null Weingarten map, defined by
\begin{align*}
	b: TN/{\R V} 	&\to TN/{\R V}, \\
		[X]					&\mapsto [\n_X V],
\end{align*}
is well-defined since $\n_V V = \frac12 V$.
Rescaling the integral curves of $V$ to (lightlike) geodesics, one observes that the geodesics are complete in the positive direction of $V$, i.e.\ they are past complete (since $V$ is past directed).
It then follows by \cite{Galloway2001}*{Prop. 3.2} that the expansion $\theta := \tr(b)$ satisfies $\theta \leq 0$ everywhere.
By \cite{Larsson2015}*{Lem. 1.3}, there is a Riemannian metric $\sigma$ on $N$ such that the induced volume density $d \mu_\sigma$ satisfies
\[
	\L_Vd\mu_\sigma = -\theta d \mu_\sigma.
\]
Since $\theta \leq 0$, it follows that the total volume of $N$ measured by $\sigma$ grows along the flow of $V$. 
But since $N$ is a compact hypersurface which is mapped diffeomorphically into itself, under the flow of $V$, the volume stays constant and we conclude that $\theta = 0$.
From the Raychaudhuri equation \cite{Galloway2001}*{Eq.\ (A.5)} and since $\Ric(V,V)|_N = 0$, it now follows that also the trace-free part of $b$ vanishes. 
We conclude that $b = 0$, i.e.\ that
\[
	g(\n_X V, Y) = 0
\]
for all $X, Y \in TN$.

Since $N$ is lightlike, this implies that there is a smooth one-form $\o$ on $N$ such that
\[
	\n_XV = \o(X)V
\]
for all $X \in TN$.
Since $\n_V V = \frac12 V$, we know that $V$ is nowhere in $\ker(\o)$.
We obtain the split into vector bundles
\begin{equation}
	TN = \R V \oplus \ker(\o). \label{eq: split TN}
\end{equation}
Since $N$ is a lightlike hypersurface, it follows that $\ker(\o) \subset TN \subset TM|_N$ is a Riemannian subbundle.
Therefore $\ker(\o)^\perp \subset TM|_N$ is a Lorentzian subbundle.
Recall that, by assumption, $M$ is time-oriented.
This means that there is a nowhere vanishing timelike vector field $T$ along $N$.
Projecting $T$ onto $\ker(\o)^\perp$ gives a nowhere vanishing vector field $T^\perp$, which is transversal to $TN$.
This implies that $\ker(\o)^\perp$ is a trivial Lorentzian vector bundle spanned by $V$ and $T^\perp$.
Since $V$ is past directed by assumption, there is a unique nowhere vanishing \emph{future pointing lightlike} vector field $L$ along $N$ such that $g(L, V) = 1$ and $g(L, X) = 0$ for any $X \in \ker(\o)$.

Let us now solve the geodesic equation from $N$ in the direction of $-L$ and $L$.
More precisely, define the map
\begin{align*}
	F: (-\e, \e) \times N &\to M, \\
	(s, p) &\mapsto \exp|_p(Ls).
\end{align*}
Since $N$ is compact, there is a small $\e > 0$ such that $F$ is a diffeomorphism onto its image $\U$.
We define $\d_t$ as the vector field with integral curves given by
\[
	s \to F(s, p),
\]
for any $p \in N$. By construction, we have $\n_{\d_t}\d_t = 0$, $g(\d_t|_N, V) = g(L, V) = 1$ and $\d_t|_N = L \perp \ker(\o)$, as claimed.
Considering the first component of the inverse of $F$, we get the uniquely determined time function
\[
	t: \U \to (-\e, \e).
\]
In particular, we get a diffeomorphism
\[
	\U = t^{-1}(-\e, \e) \cong (-\e, \e) \times N,
\]
where $\U$ is an open subset of $M$ containing $N$.
\end{proof}

From now on, we identify any subset of the form $(-\e, \e) \times N$ with the open subset $t^{-1}(-\e, \e) \subset M$.
Moreover, we identify $\{0\} \times N$ with $N$.
\begin{remark}\label{rmk: splits}
We now define the vector field $W$ on $(-\e, \e) \times N$ as the unique solution to 
\begin{align}
	[W, \d_t]
		&= 0, \label{eq: commutator W and d_t}\\
	W|_N 
		&= V. \label{eq: intial data W}
\end{align}
Later on in Section \ref{sec: extension of KVFs}, when we assume that $\Ric = 0$ on $(-\e, \e) \times N$, $W$ will indeed be shown to be a Killing vector field as claimed in the introduction.
However, for now, $W$ is just a natural extension of the vector field $V$ using the null time function.
The vector fields $\d_t$ and $W$ will be linearly independent on $(-\e, \e) \times N$.
We now define the vector bundle $E$ over $(-\e, \e) \times N$ as the span of any vector field $X$ such that
\begin{align*}
	[\d_t, X] 
		&= 0, \\
	X|_N 
		&\in \ker(\o).
\end{align*}
In other words, we Lie transport $\ker(\o)$ along $\d_t$.
Since $[W, \d_t] = 0 = [\d_t, \d_t]$, we get the following splits:
\begin{equation} \label{eq: tangent bundle split}
	TM|_{(-\e, \e) \times N} = \R\d_t \oplus \R W \oplus E
\end{equation}
and
\begin{equation} \label{eq: hypersurface split}
	T\left(\{\tau\} \times N \right) = \R W \oplus E
\end{equation}
for all $\tau \in (-\e, \e)$.
Of course, not all vector fields $Y$ in $E$ satisfy $[\d_t, Y] = 0$, however, note that $[\d_t, Y]$ is in $E$.
Now, since $E|_N = \ker(\o)$, where $\o$ was defined in the proof of Proposition \ref{prop: null time function}, the spacetime metric $g$ is positive definite on $E|_N = \ker(\o)$. 
By compactness of $N$, let us choose $\e$ so small that $g$ is positive definite on $E$.
\end{remark}
Whenever we write $X \in E$, we mean that $X$ is a smooth vector field on $(-\e, \e) \times N$ such that $X|_p \in E$, for every $p \in (-\e, \e) \times N$.

\subsection{Formulating the theorem} \label{subsec: unique cont}
We may now formulate our unique continuation theorem for linear wave equations, coupled to linear transport equations, in terms of the null time function of the previous subsection.
Let $F \to M$ be a real or complex vector bundle and let $a$ be a positive definite symmetric or hermitian metric on $F$.
For any subset $\U \subset M$, let
\[
	C^\infty(\mathcal \U, F)
\] 
denote the smooth sections in $F$ defined on $\U$.
Let $\on$ be a compatible connection, i.e.\
\[
	\on a = 0.
\]
We extend $\on$ to elements of the form $S \otimes u$, for a tensor field $S$ and a section $u$ in $F$, by the product rule
\[
	\on(S \otimes u) := (\n S) \otimes u + S \otimes (\on u),
\]
where $\n$ denotes the Levi-Civita connection with respect to $g$.
In particular, the second derivative is given by
\[
	\on^2_{X, Y} u := \on_X \on_Y u - \on_{\n_X Y}u.
\]
We define the linear wave operator
\[
	\bBox := - \tr_g\left(\on^2\right).
\]
In a local frame, we may express $\Box$ as
\[
	\bBox = - g^{\a\b}\left(\on_{e_\a}\on_{e_\b} - \on_{\n_{e_\a}e_\b}\right),
\]
where $\n$ is the Levi-Civita connection with respect to $g$.
Let us from now on use the notation
\[
	\on_t := \on_{\d_t}.
\]
and let us define
\begin{equation} \label{eq: abs bar nabla}
	\abs{\en u}^2 := \sum_{i,j = 2}^n g^{ij}a(\on_{e_i} u, \on_{e_j}u),
\end{equation} 
expressed in some local frame $e_2, \hdots, e_n$ of $E$.
Since $E \subset TM$ is a vector subbundle on $(-\e, \e) \times N$, the definition of $\abs{\en u}$ is independent of the choice of local frame.
By Remark \ref{rmk: splits}, the metric $g$ is positive definite on $E$, which shows that the right hand side of \eqref{eq: abs bar nabla} is indeed non-negative.
The following is our main unique continuation theorem:

\begin{thm}[Unique continuation] \label{thm: unique continuation}
Let $M$ and $N$ satisfy Assumption \ref{ass: N compact} and let $F_1, F_2 \to M$ be real or complex vector bundles, equipped with compatible positive definite metrics and connections.
There is an $\e > 0$, such that if 
\[
	u_1 \in C^\infty((-\e, \e) \times N, F_1), \quad u_2 \in C^\infty((-\e, \e) \times N, F_2)
\]
satisfy
\begin{equation} \label{eq: pointwise bound assumption}
	\abs{\bBox u_1} + \abs{\on_t u_2} 
		\leq \frac C{\abs{t}} \big(\abs{\on_W u_1} + \abs{u_1} + \abs{u_2} \big) + C \big(\abs{\on_t u_1} + \abs{\en u_1}\big)
\end{equation}
for some constant $C > 0$ and
\begin{align*}
	(\on_t)^m u_1|_N
		&= 0, \\
	(\on_t)^m u_2|_N
		&= 0
\end{align*}
for all $m \in \N$, then
\begin{align*}
	u_1 
		&= 0, \\
	u_2
		&= 0
\end{align*}
on $(-\e, \e) \times N$.
The constant $C$ in \eqref{eq: pointwise bound assumption} is allowed to depend on $u_1$ and $u_2$, whereas $\e$ is independent of $u_1$, $u_2$ and $C$.
\end{thm}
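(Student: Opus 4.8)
The plan is to deduce the statement from a singular Carleman estimate of the type announced in Subsection~\ref{subsec: Strategy} and then run the standard weight-limiting argument. First I would localise: fix a cutoff $\chi \in C^\infty((-\e,\e)\times N)$ equal to $1$ on $\{\abs t < \e/2\}$ and supported in $\{\abs t < 3\e/4\}$. Since $(\n_t)^m u_j|_N = 0$ for all $m$, each product $t^{-\a}\chi u_j$ extends smoothly across $N$ and lies in the weighted $L^2$-spaces below, while $[\Box,\chi]u_1$ and $[\n_t,\chi]u_2$ are supported in $\{\e/2 \le \abs t \le 3\e/4\}$, where $t^{-\a}$ is bounded by $(\e/2)^{-\a}$. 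The analytic core is then, for $u = \chi u_1$ and $\a$ a large integer, the wave estimate $\norm{t^{-\a}\Box u}_{L^2}^2 \ge C\,\norm{t^{-\a}u}^2_{H^1_\a}$, whose right-hand norm is designed to dominate $\int t^{-2\a}\big(\tfrac{\a^3}{t^2}\abs u^2 + \tfrac{\a}{t^2}\abs{\n_V u}^2 + \a\,(\abs{\n_t u}^2 + \abs{\bar\n u}^2)\big)$.

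To prove the wave estimate I would conjugate, writing $\Box_\a u := t^{-\a}\Box(t^\a u)$ and splitting $\Box_\a = \Box_\a^s + \Box_\a^a$ into its formally self-adjoint and anti-self-adjoint parts, so that \eqref{eq: Box alpha} reduces matters to a lower bound for the commutator \eqref{eq: commutator}. The geometric input is the behaviour of $g$ as $t \to 0$: using Proposition~\ref{prop: null time function}, the relations $\n_{\d_t}\d_t = 0$, $[\d_t,V]=0$, $[\d_t,X]\in E$ of Remark~\ref{rmk: splits}, the normalisation $\n_V V = \tfrac12 V$ and the totally geodesic structure of $N$ from Theorem~\ref{thm: smoothness Cauchy horizon}, one checks that $\grad(t) = V$ along $N$ and expands $\Hess(t)$ up to errors of order $t^2$. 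This yields $\Box_\a^s \approx \Box - \tfrac{\a^2}{t}$ and $\Box_\a^a \approx -\tfrac{2\a}{t}\n_V$; since $V(t) = 0$, the dominant contribution of the commutator, after integrating by parts along $V$, is a term of size $\tfrac{\a}{t^2}\abs{\n_V u}^2$, while the square $\norm{\Box_\a^s u}^2$ supplies the zeroth order weight of order $\tfrac{\a^3}{t^2}\abs u^2$. The hard part will be to show that these dominant terms have a definite sign \emph{uniformly for $t \in (-\e,\e)$}, in particular across the sign change of $t$, where the indefinite $\tfrac{\a^2}{t}$-weighted gradient cross terms threaten the estimate; this is precisely where the hypothesis $\kappa = \tfrac12 \neq 0$ and the use of a singular weight defined along all of $N$ are indispensable, and where Hörmander's pseudoconvexity is unavailable because $N$ is characteristic.

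For $u_2$ I would prove the analogous but elementary transport estimate $\norm{t^{-\a}\n_t u_2}_{L^2}^2 \ge C\a^2\int \tfrac{t^{-2\a}}{t^2}\abs{u_2}^2$, obtained by conjugating $\n_t$, completing the square and integrating by parts in $t$, the boundary terms at $t = 0$ vanishing by the infinite-order vanishing of $u_2$. Adding the two estimates yields a joint Carleman estimate bounding the combined weighted norm of $(\chi u_1, \chi u_2)$ by $\norm{t^{-\a}\Box(\chi u_1)}^2 + \norm{t^{-\a}\n_t(\chi u_2)}^2$ plus the cutoff error. Inserting the differential inequality \eqref{eq: pointwise bound assumption}, squared and weighted by $t^{-2\a}$, every term on its right-hand side is controlled by the combined norm with a margin tending to $0$ as $\a \to \infty$: the singular terms $\tfrac{C}{\abs t}\abs{u_1}$, $\tfrac{C}{\abs t}\abs{\n_V u_1}$, $\tfrac{C}{\abs t}\abs{u_2}$ are beaten by the weights $\tfrac{\a^3}{t^2}$, $\tfrac{\a}{t^2}$, $\tfrac{\a^2}{t^2}$, and the bounded term $C(\abs{\n_t u_1} + \abs{\bar\n u_1})$ by the overall factor $\a$. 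Choosing $\a$ large absorbs all of these into the left-hand side, leaving only the cutoff error, which is $\le (\e/2)^{-2\a}C_0$ for some $C_0$ independent of $\a$. On the other hand, for any $\e' < \e/2$ the left-hand side is $\ge (\e')^{-2\a}\int_{\{\abs t \le \e'\}}(\abs{u_1}^2 + \abs{u_2}^2)$, whence $\int_{\{\abs t \le \e'\}}(\abs{u_1}^2 + \abs{u_2}^2) \le C_0\,(2\e'/\e)^{2\a} \to 0$ as $\a \to \infty$. Hence $u_1 = u_2 = 0$ on $\{\abs t < \e/2\}$, and relabelling $\e/2$ as $\e$ completes the proof.
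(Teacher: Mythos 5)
Your proposal reproduces the paper's architecture faithfully: conjugation $\Box_\a = t^{-\a}\Box(t^\a\,\cdot\,)$, the splitting into formally self-adjoint and anti-self-adjoint parts, reduction via \eqref{eq: Box alpha} to a commutator lower bound, the elementary transport estimate (which matches Proposition \ref{prop: Carleman estimate transport}), the cutoff supported away from $N$, absorption of the right-hand side of \eqref{eq: pointwise bound assumption} for large $\a$, and the $\a \to \infty$ limit; your endgame with $(2\e'/\e)^{2\a} \to 0$ is a correct variant of the paper's pointwise contradiction argument. However, the actual content of the theorem --- the singular Carleman estimate $\norm{t^{-\a}\Box u}_{L^2} \geq \sqrt{\a}\, C \norm{t^{-\a}u}_{H^1_\a}$, uniformly across $t = 0$ --- is nowhere established: you explicitly defer the uniform positivity of the commutator as \enquote{the hard part} and stop there. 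That estimate is Theorem \ref{thm: Carleman estimate}, and its proof is the bulk of the paper: the metric asymptotics (Proposition \ref{prop: metric in time function}), the computation of $\Hess(t)$ up to $O(t^2)$ errors (Proposition \ref{prop: Hessian}), the identity for $[\Box, \frac1t \n_{\grad(t)}]$ whose leading part is the sum of squares $(\frac1t\n_{\grad(t)})^2 + (\frac1t \n_V)^2$ modulo $N$-tangential operators (Lemma \ref{le: hard commutator}), the G\aa rding-type bound on $\Box_\a^s$ (Lemma \ref{le: Garding}) needed to absorb a leftover term $-\a C \norm{\bar\n u}_{L^2}^2$ of the wrong sign, and the final two-case argument. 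Without this, what you have is a plan, not a proof.

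Moreover, the sketch you do give of the hard part would fail as stated, for a concrete reason. You take $\Box_\a^a \approx -\frac{2\a}{t}\n_V$, but the conjugation produces $-\frac{2\a}{t}\n_{\grad(t)}$, and by \eqref{eq: grad(t)} the difference $\frac1t\n_{\grad(t)} - \frac1t\n_V$ contains $\n_t$ with coefficient close to $1$, a full first-order operator entering at order $\a$ --- exactly the order at which the estimate is fought. This substitution is not cosmetic: since $V$ is tangent to the level sets, $dt(V) = 0$, so the commutator of your anti-self-adjoint part with the potential, $[-\frac{\a^2}{t}, -\frac{2\a}{t}\n_V]$, vanishes identically, and your scheme generates no $\frac{\a^3}{t^2}\abs{u}^2$ weight from the commutator at all. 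In the paper this weight comes precisely from $[-\frac{\a^2}{t}, -\frac{2\a}{t}\n_{\grad(t)}] = \frac{2\a^3}{t^2} + O(\a^3/t)$, where $dt(\grad(t)) = t + \f t^2$ --- i.e.\ the non-zero surface gravity --- is what makes the term non-degenerate. Your fallback, extracting $\frac{\a^3}{t^2}\abs{u}^2$ from $\norm{\Box_\a^s u}_{L^2}^2$, does not work either: expanding that square produces the cross term $-2\a^2 \Re \ldr{\Box u, u/t}_{L^2}$, which after integration by parts involves $\frac1t \n_t u$-type derivatives that the $H^1_\a$-norm does not control (it only controls $\frac1t\n_{\grad(t)}u$, $\frac1t\n_V u$ and $\bar\n u$), and whose sign is indefinite both because $1/t$ changes sign and because $g$ is Lorentzian. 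Finally, you never address the $N$-tangential and curvature terms in the commutator, which produce the $-\a C\norm{\bar\n u}_{L^2}^2$ contribution that forces the paper to prove Lemma \ref{le: Garding} and run the case analysis in the first place.
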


\begin{remark}
Setting $u_1 = 0$ or $u_2 = 0$ in Theorem \ref{thm: unique continuation} gives unique continuation theorems for linear wave equations and linear transport equations, respectively.
\end{remark}

Let us prove Theorem \ref{thm: Wave main}, which is a simple consequence of Theorem \ref{thm: unique continuation}.

\begin{proof}[Proof of Theorem \ref{thm: Wave main}]
Since $P$ is a wave operator, there are smooth homomorphism fields $A$ and $B$, such that
\[
	\bBox u + B(\on u) + A u = 0.
\]
By the split \eqref{eq: tangent bundle split}, we obtain the pointwise estimate
\[
	\abs{\bBox u} \leq C \left( \abs{\on_t u} + \abs{\on_W u} + \abs{\en u} + \abs{u}\right).
\]
Applying Theorem \ref{thm: unique continuation} with $u_1 := u$, $u_2 = 0$ and $N = \H$ implies that $u = 0$ on $(-\e, \e) \times \H$.
Applying \cite{Petersen2018}*{Cor.\ 1.8} now proves the theorem.
\end{proof}

\begin{remark}
Recall from Example \ref{ex: van surf grav} that the smooth function
\[
	u(t, x) := \begin{cases} e^{-\frac1t} & t > 0, \\ 0 & t \leq 0.\end{cases}
\]
satisfies the equation
\[
	\Box u - \left(1 - \frac1t\right)\d_t u = 0
\]
on the Misner spacetime, $M = \R \times S^1$ with $g = 2dtdx - tdx^2$.
Hence our assumption \eqref{eq: pointwise bound assumption} is sharp in the sense that unique continuation is false in general, only assuming pointwise bounds of the form
\[
	\abs{\bBox u} \leq \frac C{\abs t}\abs{\on_t u}.
\]
\end{remark}

Theorem \ref{thm: unique continuation} will be a consequence of the Carleman estimate formulated in the next subsection.

\subsection{The Carleman estimate} \label{subsec: stating Carleman}
Given a real or complex vector bundle $F \to M$ with positive definite metric $a$ and compatible connection $\on$, let us define the vector space
\[
	C_*^\infty((-\e, \e) \times N, F) := \{u \in C_c^\infty((-\e, \e) \times N, F) : (\on_t)^mu|_N = 0 \ \forall m \in \N_0\}.
\]
In other words, $C_*^\infty((-\e, \e) \times N, F)$ denotes the compactly supported sections such that the section and all transversal derivatives vanish at $N$.

It turns out that a certain norm on $C_*^\infty((-\e, \e) \times N, F)$ is relevant for the Carleman estimates.
For this, we first define the $L^2$-inner product as
\[
	\ldr{u, v}_{L^2} := \int_{(-\e, \e) \times N} a(u, v) d\mu_g,
\]
where $u, v \in C_c^\infty((-\e, \e) \times N, F)$ and $d\mu_g$ is the induced volume density.
This induces the $L^2$-norm
\[
	\norm{u}_{L^2} := \sqrt{\ldr{u, u}_{L^2}}.
\]
We use the notation
\[
	\norm{\en u}_{L^2} := \norm{\abs{\en u}}_{L^2},
\]
where $\abs{\en u}$ is defined in \eqref{eq: abs bar nabla}.
For any $\a \in \N$, define the norm
\[
	\norm{u}_{H^1_\a}^2 := \norm{\frac1t \on_{\grad(t)}u}_{L^2}^2 + \norm{\frac1t \on_W u}_{L^2}^2 + \norm{\en u}_{L^2}^2 + \norm{\frac{\a u}t}_{L^2}^2
\]
on $C_*^\infty((-\e, \e) \times N, F)$.
The norm is well-defined though the coefficients are singular, since any section $C_*^\infty$ decays faster than any $t^m$ as $t \to 0$. 

Theorem \ref{thm: unique continuation} will be proven in Subsection \ref{subsec: proof unique cont} using the following two Carleman estimates:

\begin{thm}[The Carleman estimate for linear wave operators]\label{thm: Carleman estimate}
Let $M$ and $N$ satisfy Assumption \ref{ass: N compact} and let $F \to M$ be a real or complex vector bundle.
There are constants $\e, \a_0, C > 0$, such that 
\[
	\norm{t^{-\a} \bBox u}_{L^2} \geq \sqrt \a C \norm{t^{-\a}u}_{H^1_\a}
\]
for all $u \in C_*^\infty((-\e, \e) \times N, F)$ and all integers $\a \geq \a_0$.
\end{thm}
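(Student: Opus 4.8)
The plan is to pass to the conjugated operator $\Box_\a$ and prove the equivalent estimate
\[
	\norm{\Box_\a w}_{L^2} \geq \sqrt\a\, C\, \norm{w}_{H^1_\a}, \qquad w \in C_*^\infty((-\e,\e)\times N, F),
\]
where $w := t^{-\a}u$. Since $u$ vanishes to infinite order along $N$, the map $u \mapsto t^{-\a}u$ is a bijection of $C_*^\infty$ onto itself, $t^{-\a}\Box u = \Box_\a w$, and $\norm{t^{-\a}u}_{H^1_\a} = \norm{w}_{H^1_\a}$, so the two estimates coincide. Using the product rule $\Box(t^\a w) = (\Box t^\a)w - 2\n_{\grad(t^\a)}w + t^\a\Box w$ together with $\Box(t^\a) = -\a(\a-1)t^{\a-2}\abs{\grad(t)}^2 + \a t^{\a-1}\Box t$, I obtain
\[
	\Box_\a w = \Box w - \frac{\a(\a-1)}{t^2}\abs{\grad(t)}^2 w + \frac{\a}{t}(\Box t)\,w - \frac{2\a}{t}\n_{\grad(t)}w .
\]
The key input is the asymptotic $\abs{\grad(t)}^2 = g(\grad(t),\grad(t)) = 2\kappa t + O(t^2) = t + O(t^2)$, which I would establish first in the asymptotic analysis of the metric near $N$ (Subsection \ref{subsec: properties of t}); the hypothesis $\kappa \neq 0$ enters decisively here, as it forces $\abs{\grad(t)}^2$ to vanish \emph{exactly} to first order. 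Splitting $\Box_\a$ into its formally self-adjoint and anti-self-adjoint parts with respect to $\ldr{\cdot,\cdot}_{L^2}$, the multiplication correction coming from $(\tfrac1t\n_{\grad(t)})^*$ reorganises the terms so that $\Box_\a^s = \Box - \a^2 t^{-2}\abs{\grad(t)}^2 \approx \Box - \a^2/t$ and $\Box_\a^a \approx -\tfrac{2\a}t\n_{\grad(t)}$, exactly as anticipated in \eqref{eq: commutator}.

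The backbone of the proof is the identity \eqref{eq: Box alpha},
\[
	\norm{\Box_\a w}^2_{L^2} = \norm{\Box_\a^s w}^2_{L^2} + \norm{\Box_\a^a w}^2_{L^2} + \ldr{[\Box_\a^s,\Box_\a^a]w, w}_{L^2},
\]
in which $[\Box_\a^s,\Box_\a^a]$ is again formally self-adjoint, so the last term is real. I would split the commutator as $[\Box,\Box_\a^a] + [{-}\a^2 t^{-2}\abs{\grad(t)}^2, \Box_\a^a]$, up to terms of lower order in $\a$. The second bracket is purely algebraic: since the first factor is a multiplication operator, $[m_f,\n_X] = -X(f)$, and a short computation using $(\grad(t))(t) = \abs{\grad(t)}^2$ shows it equals multiplication by $2\a^3 t^{-3}\abs{\grad(t)}^2$ modulo $O(\a^2 t^{-1})$. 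As $t^{-3}\abs{\grad(t)}^2 = t^{-2} + O(t^{-1}) \geq \tfrac12 t^{-2}$ for $\e$ small, this produces a contribution bounded below by $\a^3\, \norm{w/t}^2_{L^2}$, which already controls the $\a^2\norm{w/t}^2$ term of $\norm{w}_{H^1_\a}^2$ with a full extra power of $\a$ to spare.

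The delicate term is $\ldr{[\Box,\Box_\a^a]w, w}_{L^2} = -2\a\,\ldr{[\Box,\n_Y]w, w}_{L^2}$ with $Y := t^{-1}\grad(t)$, which I would evaluate by the energy-momentum (multiplier) method: integrating by parts converts it into a bulk integral of the deformation tensor $\L_Y g$ contracted against $\n w \otimes \n w$, plus a term in $\div(Y)\abs{\n w}^2$ and curvature terms controlled by $\Ric(V,\cdot)|_N = 0$ from Assumption \ref{ass: N compact}. The contribution of the null directions has the form $t^{-2}\abs{\n_{\grad(t)}w}^2$ (with a factor $O(\a)$) and is dominated by $\norm{\Box_\a^a w}^2_{L^2} \approx 4\a^2\norm{t^{-1}\n_{\grad(t)}w}^2_{L^2}$, which carries one more power of $\a$; since $\grad(t)|_N = V$ and $\grad(t) - V = O(t)$, this same anti-self-adjoint square simultaneously controls $\norm{t^{-1}\n_V w}^2_{L^2}$.

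The hard part, and the analytic heart of the estimate, is extracting the spacelike term $\a\norm{\bar\n w}^2_{L^2}$ with the correct sign. On the subbundle $E$ the $dt\otimes dt$ piece of $\L_Y g$ drops out because $dt|_E = 0$, leaving a contribution proportional to $\a\int t^{-1}\Hess(t)(\bar\n w,\bar\n w)\,d\mu_g$. This is precisely where the precise expansion of the null time function is needed: I would compute $\Hess(t)$ along $E$ up to quadratic errors in $t$, using that $N$ is totally geodesic (so that the splitting \eqref{eq: split TN} holds and $\n_X V = 0$ for $X \in E$) together with $\kappa \neq 0$, to show that $t^{-1}\Hess(t)|_{E\times E}$ has the definite sign rendering this contribution uniformly positive, of size $\a\norm{\bar\n w}^2_{L^2}$. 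Compactness of $N$ is essential here, as it makes every $O(\cdot)$ bound and the positivity constant uniform. Collecting the positive contributions, of orders $\a^3\norm{w/t}^2$, $\a^2\norm{t^{-1}\n_{\grad(t)}w}^2$ and $\a\norm{\bar\n w}^2$, and noting that every error term (the $\Box t$ term, the self-adjoint correction, the $O(t^2)$ remainders in $\abs{\grad(t)}^2$ and $\Hess(t)$, and the curvature terms) is either lower order in $\a$ or carries an extra factor of $t$, I would choose $\a_0$ large and $\e$ small to absorb all of them, arriving at $\norm{\Box_\a w}^2_{L^2} \geq \a C \norm{w}^2_{H^1_\a}$. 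I expect the sign and positivity of $t^{-1}\Hess(t)|_E$ to be the only genuinely new difficulty; everything else is careful bookkeeping of powers of $\a$ and $t$.
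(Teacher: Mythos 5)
Your overall framework is the same as the paper's: conjugation to $\Box_\a$, the splitting into formally self-adjoint and anti-self-adjoint parts, identity \eqref{eq: Box alpha}, the asymptotics $g(\grad(t),\grad(t)) = t + \f t^2$ (where $\kappa \neq 0$ enters), and the extraction of the $\a^3\norm{w/t}^2_{L^2}$ term from the cubic part of the commutator all match Lemma \ref{le: first estimates}. But two of your key steps fail. The first is the claim that $\norm{\Box_\a^a w}^2_{L^2} \approx 4\a^2 \norm{t^{-1}\n_{\grad(t)}w}^2_{L^2}$ "simultaneously controls" $\norm{t^{-1}\n_V w}^2_{L^2}$ because $\grad(t) - V = O(t)$. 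By \eqref{eq: grad(t)} we have $t^{-1}(\grad(t)-V) = (1+\f t)\d_t + \f V + \W$, so $t^{-1}\n_V w = t^{-1}\n_{\grad(t)}w - (1+\f t)\n_t w - \f \n_V w - \n_\W w$: the discrepancy contains the full-strength transversal derivative $\n_t w$, which is \emph{not} controlled by $\norm{t^{-1}\n_{\grad(t)}w}_{L^2}$. The three operators $t^{-1}\n_{\grad(t)}$, $t^{-1}\n_V$ and $\n_t$ satisfy a single linear relation (modulo $\bar\n$), so bounding one of them bounds neither of the others. In the paper, the positive term $2\a\norm{\frac1t \n_V u}^2_{L^2}$ is produced by the commutator itself: by Corollary \ref{cor: Lie derivative} the $(V,V)$-component of $\L_{\grad(t)/t}\,g$ equals $-1 + O(t)$ (this is exactly where $\kappa = \tfrac12$ is used), whence Lemma \ref{le: hard commutator} shows that $[\Box, \frac1t\n_{\grad(t)}]$ contains $(\frac1t\n_V)^2$, which integrates by parts to the needed square via Corollary \ref{cor: formal adjoint V}. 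In your multiplier-method language: the deformation tensor you propose to compute does contain this positivity, but you discard the null contributions as absorbable errors instead of extracting it, and substitute the invalid $O(t)$ argument.

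The second, more serious, gap is the step you yourself flag as the main difficulty: the hoped-for sign of $t^{-1}\Hess(t)|_{E\times E}$ does not exist. By Proposition \ref{prop: Hessian}, $\Hess(t)|_{E\times E} = \f t + O(t^2)$ where $\f$ is a generic smooth function with no definite sign; nothing in Assumption \ref{ass: N compact} (the hypersurface is merely totally geodesic with $\Ric(V,\cdot)|_N = 0$) constrains the sign of $\d_t \Hess(t)|_{E\times E}$ at $t = 0$. Accordingly, in the paper the spacelike contribution of the commutator is the \emph{harmful} term $-C\a\norm{\bar\n u}^2_{L^2}$ of Lemma \ref{le: commutator}, and the heart of the proof is a compensation mechanism your proposal never invokes: the G{\aa}rding-type lower bound for the self-adjoint part, Lemma \ref{le: Garding}, namely $\norm{\Box_\a^s u}_{L^2} \geq \a \norm{\bar\n u}^2_{L^2}/\norm{u}_{H^1_\a} - \e\a C\norm{u}_{H^1_\a}$ (which exploits that the singular metric coefficients $g^{00}, g^{11} \sim \pm 1/t$ carry a relative factor $\abs{t}$ against the $1/t^2$ weights), combined with the two-case analysis $\norm{u}^2_{H^1_\a} \lessgtr 2C\norm{\bar\n u}^2_{L^2}$ in the final proof. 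Since you use $\norm{\Box_\a^s w}^2_{L^2}$ only as a nonnegative term to be thrown away, and the sign of $t^{-1}\Hess(t)|_{E\times E}$ cannot rescue the spacelike term, the proposed argument collapses at precisely this point.
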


Let us emphasise that the constant $C$ in Theorem \ref{thm: Carleman estimate} is independent of $\a$ and $u$.
We prove Theorem \ref{thm: Carleman estimate} in the next two subsections.
\begin{prop}[The Carleman estimate for linear transport operators] \label{prop: Carleman estimate transport}
Let $M$ and $N$ satisfy Assumption \ref{ass: N compact}.
There is an $\e > 0$ such that
\[
	\norm{t^{-\a}\on_t u}_{L^2} \geq \a \norm{t^{-\a-1}u}_{L^2},
\]
for all $u \in C_*^\infty((-\e, \e) \times N, F)$ and all integers $\a \geq 0$.
\end{prop}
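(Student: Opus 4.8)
The plan is to prove the estimate by the conjugation method. Setting $v := t^{-\a}u$, which again lies in $C_*^\infty((-\e,\e)\times N, F)$ because $u$ and all its $\d_t$-derivatives vanish on $N$ (so $t^{-\a}u$ extends flatly across $N$), a direct computation gives $t^{-\a}\n_t u = \n_t v + \frac{\a}{t}v$ and $t^{-\a-1}u = \frac1t v$. Hence the claimed inequality is equivalent to $\norm{\n_t v + \frac{\a}{t} v}_{L^2}^2 \geq \a^2\norm{\frac1t v}_{L^2}^2$. Expanding the square, $\norm{\n_t v + \frac{\a}{t} v}_{L^2}^2 = \norm{\n_t v}_{L^2}^2 + \a^2\norm{\frac1t v}_{L^2}^2 + 2\a\,\Re\ldr{\n_t v, \tfrac1t v}_{L^2}$, so everything hinges on the cross term. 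The case $\a = 0$ is trivial, so I assume $\a \geq 1$.

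The key step is to integrate the cross term by parts in $t$. Using compatibility $\n a = 0$, one has $\Re\,a(\n_t v, v) = \frac12\d_t\abs{v}^2$, so that $2\,\Re\ldr{\n_t v, \frac1t v}_{L^2} = \int \frac1t\d_t\abs{v}^2\, d\mu_g$. Writing $d\mu_g = J\,dt\,dA$ for a fixed reference measure $dA$ on $N$ and a smooth positive Jacobian $J$, I integrate by parts in $t$. The boundary terms at $t = \pm\e$ vanish by compact support, and the interior contributions as $t \to 0^\pm$ vanish because $v$ is flat at $N$, so that $\abs{v}^2/t \to 0$. The derivative $\d_t(J/t) = \frac{\d_t J}{t} - \frac{J}{t^2}$ produces the crucial \emph{positive} term coming from $\frac{J}{t^2}$ together with an error governed by $\d_t\log J = \div(\d_t)$, giving
\[
	2\,\Re\ldr{\n_t v, \tfrac1t v}_{L^2} = \norm{\tfrac1t v}_{L^2}^2 - \int \frac{\div(\d_t)}{t}\abs{v}^2\, d\mu_g .
\]

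Substituting this back and discarding $\norm{\n_t v}_{L^2}^2 \geq 0$, it remains to establish $\norm{\frac1t v}_{L^2}^2 \geq \int \frac{\div(\d_t)}{t}\abs{v}^2\, d\mu_g$. Since $\div(\d_t)$ is smooth on the compact closure of the region, it is bounded by some $C_1$ that stays controlled as $\e$ shrinks (it tends to $\sup_N\abs{\div(\d_t)}$). Cauchy--Schwarz applied to $\int \frac{\abs{v}^2}{\abs t}\,d\mu_g = \int \abs{\tfrac1t v}\abs{v}\,d\mu_g$, combined with the pointwise bound $\abs{v} \leq \e\abs{\frac1t v}$ valid on $\{\abs t < \e\}$, yields $\int \frac{\abs{\div(\d_t)}}{\abs t}\abs{v}^2\,d\mu_g \leq C_1\e\norm{\frac1t v}_{L^2}^2$. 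Choosing $\e \leq 1/C_1$ closes the estimate.

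I expect the main subtlety to be the integration by parts \emph{through the singularity} at $t = 0$: the weight $1/t$ is genuinely singular and the interval $(-\e,\e)$ straddles $N$, so one must carefully justify that the interior boundary contributions vanish by splitting the integral at $\pm\de$ and letting $\de \to 0$, using precisely the hypothesis $v \in C_*^\infty$. The favourable sign of the resulting $1/t^2$ term is what makes the estimate work, and this is the same singular-weight mechanism that will drive the considerably harder wave estimate of Theorem \ref{thm: Carleman estimate}; here, by contrast, no fine analysis of the geometry near $N$ is needed beyond the boundedness of $\div(\d_t)$.
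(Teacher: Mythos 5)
Your proof is correct and follows essentially the same route as the paper: the paper packages your explicit integration by parts in $t$ as the formal adjoint identity $(\n_t)^* = -\n_t - \div(\d_t)$, extracts the same favourable term $\a\norm{\frac{v}{t}}_{L^2}^2$ from commuting $\n_t$ past $\frac{\a}{t}$, and absorbs the same $\div(\d_t)$ error by choosing $\e \leq 1/C$. The conjugation $v = t^{-\a}u$ versus the paper's final substitution $u \mapsto t^{-\a}u$ is a purely cosmetic difference.
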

The proof of Proposition \ref{prop: Carleman estimate transport} is rather simple:
\begin{proof}[Proof of Proposition \ref{prop: Carleman estimate transport}]
Note that the formal adjoint of $\on_t$ is given by
\[
	(\on_t)^* = - \on_t - \div(\d_t).
\]
Using that $\div(\d_t)$ is smooth up to $t = 0$, we compute
\begin{align*}
	\norm{t^{-\a}\on_t( t^{\a}u)}_{L^2}^2 
		&= \norm{\on_t u + \frac \a t u}_{L^2}^2 \\
		&= \norm{\on_t u}_{L^2}^2 + \a^2 \norm{\frac u t}_{L^2}^2 + \ldr{\on_t u, \frac \a t u}_{L^2} + \ldr{\frac \a t u, \on_t u}_{L^2} \\
		&= \norm{\on_t u}_{L^2}^2 + \a^2 \norm{\frac u t}_{L^2}^2 + \ldr{\frac \a t \on_t u - \on_t\left(\frac \a t u \right), u}_{L^2} \\*
		&\qquad - \ldr{\div(\d_t) \frac \a t u, u}_{L^2} \\
		&\geq \norm{\on_t u}_{L^2}^2 + (\a + \a^2) \norm{\frac u t}_{L^2}^2 - C \a \e \norm{\frac u t}^2_{L^2} \\
		&\geq \a^2 \norm{\frac u t}_{L^2}^2,
\end{align*}
if 
\[
	\e \leq \frac 1 C.
\]
Substituting $u$ with $t^{-\a} u$ finishes the proof.
\end{proof}

\subsection{Properties of the null time function} \label{subsec: properties of t}
In order to prove the Carleman estimate, Theorem \ref{thm: Carleman estimate}, the first step is to compute asymptotic properties of the metric in terms the null time function as $t \to 0$.
Recall the canonical splits \eqref{eq: tangent bundle split} and \eqref{eq: hypersurface split}, i.e.\
\begin{align*}
	T((-\e, \e) \times N) 
		&= \R\d_t \oplus \R W \oplus E, \\
	T\left(\{\tau\} \times N \right) 
		&= \R W \oplus E.
\end{align*}
for all $\tau \in (-\e, \e)$ and that
\begin{align*}
	[\d_t, W] 
		&= 0, \\
	[\d_t, X]
		&\in E
\end{align*}
for any smooth vector field $X \in E$.
Recall also that we identify the hypersurface $\{0\} \times N$ with $N$.

\begin{lemma} \label{le: components prep}
For any smooth vector field $X$ in $E$, we have
\[
		[W, X] \in E
\]
on $(-\e, \e) \times N$.
\end{lemma}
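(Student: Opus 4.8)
The plan is to reduce the statement to the single leaf $N = \{0\} \times N$ and there express the obstruction in terms of curvature. First I would observe that both $V$ and any section $X \in E$ are tangent to the foliation of $(-\e, \e) \times N$ by the leaves $\{s\} \times N$: indeed $V$ is the $t$-independent extension of $V|_N$ and $X$ is a section of the pulled-back bundle, neither having a $\d_t$-component. Hence $[V, X]$ is again tangent to this foliation, and its restriction to each leaf is the bracket of the restrictions. Since all leaves carry the same data $(V, E)$ pulled back from $N$ via the projection, and the Lie bracket is natural under the diffeomorphisms $\{s\} \times N \cong N$, it suffices to prove the claim on $N$ itself, i.e.\ that $[V, X] \in E$ for every section $X$ of $E|_N$. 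On $N$ the bracket $[V, X]$ is automatically tangent to $N$, and because $E = \{Z \in TN : g(Z, L) = 0\} = \ker \o$, where $\n_Z V = \o(Z)V$ for $Z \in TN$, the whole statement reduces to showing $\o([V, X]) = 0$.

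Next I would compute $R(V, X)V$ along $N$ for $X \in E$. Using that $N$ is totally geodesic, so that $\n_Z V = \o(Z) V$ with $\o|_E = 0$, together with $\n_V V = \frac12 V$ and the fact that $\kappa$ is constant, the two terms $\n_V \n_X V$ and $\n_X \n_V V$ vanish on $N$, leaving
\[
	R(V, X)V = -\n_{[V, X]} V = -\o([V, X]) V,
\]
since $[V, X] \in TN$. In particular $g(R(V, X)V, L) = -\o([V, X])$. I would then take the Ricci trace in the adapted null frame $(V, L, E_2, \hdots, E_n)$, in which $g(V, L) = 1$, $g(V, V) = g(L, L) = 0$ and $g$ restricts to the identity on $E$, to obtain
\[
	\Ric(V, X) = g(R(V, X)V, L) + \sum_{i} R(E_i, V, X, E_i).
\]

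The crux is then to show that the \enquote{screen trace} $\sum_i R(E_i, V, X, E_i)$ vanishes on $N$. Rewriting it via the pair symmetry as $\sum_i g(R(X, E_i)E_i, V)$ and expanding each covariant derivative, I expect every term to be killed by the identities $g(Z, V)|_{TN} = 0$ and $\n_Z V = \o(Z) V \in \R V$, which express that $N$ is totally geodesic; this is the one genuinely computational step, and it uses no curvature assumption. Granting it, the trace identity collapses to $g(R(V, X)V, L) = \Ric(V, X)$, and combining with the previous paragraph gives the clean formula $\o([V, X]) = -\Ric(V, X)$. Assumption \ref{ass: N compact} states precisely that $\Ric(V, X) = 0$ for $X \in TN$, so $\o([V, X]) = 0$ and $[V, X] \in E$ on $N$; by the reduction of the first paragraph this holds on all of $(-\e, \e) \times N$. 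I expect the main obstacle to be bookkeeping the screen-trace computation and verifying that the Ricci contraction isolates exactly $g(R(V, X)V, L)$; the appearance of $\Ric(V, X)$ in the final formula also explains why the hypothesis $\Ric(V, X) = 0$ is indispensable here.
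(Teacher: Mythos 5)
Your proposal is correct and takes essentially the same approach as the paper: the paper likewise first proves the statement at $t=0$ by expanding $0=\Ric(X,V)$ in the adapted null frame $(\d_t|_N, V, e_2,\hdots,e_n)$, where total geodesy ($\n_Y V = \o(Y)V$) and constancy of the surface gravity kill every term except $-g(\n_{[V,X]}V,\d_t) = -\o([V,X])$ --- precisely your identity $\o([V,X]) = -\Ric(V,X)$, including the vanishing of the screen trace, which indeed goes through term by term exactly as you sketch. The only cosmetic difference is the propagation off $N$: you use leafwise naturality of the Lie bracket, whereas the paper applies the Jacobi identity to a frame $e_i$ of $E$ with $[\d_t,e_i]=0$ to conclude that the coefficients of $[V,e_i]$ are $t$-independent; the two reductions are equivalent.
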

\begin{proof}
The proof relies on our assumption that
\[
	\Ric(V, Y) = 0
\]
for all $Y \in TN$.
By the proof of Proposition \ref{prop: null time function}, there is a smooth one-form $\o$ on $N$ such that
\[
	\n_Y V = \o(Y)V
\]
for all $Y \in TN$.
Recall that $E|_{t = 0} = \ker(\o)$ and, by Proposition \ref{prop: null time function}, that
\[
	g|_{t = 0} = 
	\begin{pmatrix}
		0 & 1 & 0 \\
		1 & 0 & 0 \\
		0 & 0 & \g
	\end{pmatrix},
\]
with respect to the splitting \eqref{eq: tangent bundle split}, where $\g$ is the induced positive definite metric on $E$.

We first show that $[W, X]|_{t = 0} \in E|_{t = 0}$, i.e. that $[V, X|_{t = 0}] \in \ker(\o)$, i.e.\ that $\n_{[V, X]}V|_{t = 0} = 0$. 
We already know that $\n_{[V, X]}V|_{t = 0}$ is proportional to $V$. 
It is therefore sufficient to prove that $g(\n_{[V, X]} V, \d_t)|_{t = 0} = 0$.
Using that $\n_Y V = \o(Y)V$ for all $Y \in TN$ and $\n_VV|_{t = 0} = \frac12 V|_{t = 0}$, we have
\begin{align*}
	0 	&= \Ric(X, V)|_{t = 0} \\
		&= R(\d_t, X, V, V)|_{t = 0} + R(V, X, V, \d_t)|_{t = 0} + \tr_\g \left(R(\cdot, X, V, \cdot) \right)|_{t = 0} \\
		&= g(\n_V \n_X V, \d_t)|_{t = 0} - g(\n_X \n_V V, \d_t)|_{t = 0} - g(\n_{[V, X]} V, \d_t)|_{t = 0}  \\
		&\quad +  \tr_\g \left( g(\n_{\cdot} \n_X V, \cdot)|_{t = 0} - g(\n_X \n_{\cdot} V, \cdot)|_{t = 0} - g(\n_{[\cdot, X]} V, \cdot)|_{t = 0} \right) \\
		&= -g(\n_{[V, X]} V, \d_t)|_{t = 0}.
\end{align*}
This shows $[V, X]|_{t = 0} \in E$, as claimed.

Let $e_2, \hdots, e_n$ be a local frame of $E$, defined on $(-\e, \e) \times U$ for some open subset $U \subset N$, such that $[\d_t, e_i] = 0$.
We show that $[W, e_i] \in E$ on $(-\e, \e) \times U$, for each $i$.
The Jacobi identity implies that
\[
	[\d_t, [W, e_i]] = [W, [\d_t, e_i]] + [e_i, [W, \d_t]] = 0.
\]
Writing $[W, e_i] = f_1 W + \sum_{j = 2}^n f_j e_j$, we conclude that
\[
	0 = [\d_t, [W, e_i]] = (\d_t f_1)W + \sum_{j = 2}^n (\d_tf_j) e_j,
\]
which implies that all $f_j$ are independent of $t$.
By what we have already proven, we know that $[W, e_i]|_{t = 0} \in E$ and hence may conclude that $f_1 = 0$, which in turn implies that $[W, e_i] \in E$.
Now, for a general vector field $X = \sum_{i = 2}^n X_i e_i \in E$, we conclude that
\[
	[W, X] = \sum_{i = 2}^n (\d_W X_i)e_i + X_i [W, e_i] \in E,
\]
as claimed.
\end{proof}

We now turn to the asymptotic behaviour of the spacetime metric close to $t = 0$.
It will be convenient to use the following notation.

\begin{notation}
Let $\f$ denote any \emph{smooth} function or tensor defined on some subset 
\[
	(-\e, \e) \times U,
\]
where $U \subset N$ is an open subset. 
It will be clear from the context what type of tensor $\f$ denotes.
For the special case of smooth vector fields in $E$, it turns out convenient to use a separate notation. 
Let $\W$ denote a \emph{smooth} vector field defined on some $(-\e, \e) \times U$, such that 
\[
	\W \in E
\]
on $(-\e, \e) \times U$.
We will use the notation $\f$ and $\W$ whenever the exact form is not important, the value of $\f$ and $\W$ may change from term to term.
By \eqref{eq: tangent bundle split}, any smooth vector field $X$ on $(-\e, \e) \times U$ may be expressed as
\[
	X = \f\d_t + \f W + \W,
\]
where $\f$ here denotes some smooth functions.
If, for example, we have the additional information that $X|_{t = 0} \in TN$, then we may write (in spirit of Taylor's theorem)
\[
	X = \f t \d_t + \f W + \W
\]
to emphasise this.
\end{notation}

At this point, it might seem natural to express the metric with respect to the splitting \eqref{eq: tangent bundle split}.
As it turns out, it is far more convenient to work in a slightly more orthogonal frame. 
In the next proposition, we therefore use $\grad(t)$ instead of $\d_t$.

\begin{prop}[The components of the metric] \label{prop: metric in time function}
There is an $\e > 0$, such that $\grad(t)$ is transversal to the hypersurfaces $\{t\} \times N$ for $t \in (-\e, \e) \backslash \{0\}$ and the spacetime metric is given by 
\begin{align*}
	g &= 
	\begin{pmatrix}
		t & 0 & 0 \\
		0 & -t & 0 \\
		0 & 0 & \bar g
	\end{pmatrix} + \begin{pmatrix}
		\f t^2 & 0 & 0 \\
		0 & \f t^2 & \f t^2 \\
		0 & \f t^2 & 0
	\end{pmatrix}, \\
	g^{-1} &= 
	\begin{pmatrix}
		\frac1t & 0 & 0 \\
		0 & -\frac1t & 0 \\
		0 & 0 & \bar g^{-1}
	\end{pmatrix} + \begin{pmatrix}
		\f & 0 & 0 \\
		0 & \f & \f t \\
		0 & \f t & \f t^2
	\end{pmatrix},
\end{align*}
with respect to the splitting 
\begin{equation} \label{eq: splitting TN e}
	T((-\e, \e) \times N)|_{t \neq 0} = \R\grad(t) \oplus \R W \oplus E.
\end{equation}
Here, $\bar g$ is a smooth family of positive definite metrics on $E$.
Moreover, we have
\begin{align}
	\grad(t) 
		&= (t+\f t^2)\d_t + (1+\f t)W + t Z, \label{eq: grad(t)} \\
	g(W, \d_t) 
		&= 1. \nonumber
\end{align}
\end{prop}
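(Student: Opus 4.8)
The plan is to compute every component of $g$ first in the \emph{geodesic} frame adapted to the splitting \eqref{eq: splitting again}, namely $(\d_t, V, E)$, and only afterwards pass to the orthogonal frame $(\grad(t), V, E)$ appearing in the statement. The key structural fact is that $\d_t$ is lightlike and geodesic by Proposition \ref{prop: null time function}, so $g(\d_t, \d_t) = 0$ and $\n_{\d_t}\d_t = 0$ on $(-\e, \e) \times N$. Hence for any two frame fields $X, Y$ among $\d_t, V, e_i$ (with $e_2, \dots, e_n$ a local frame of $E$) metric compatibility gives
\[
	\d_t\, g(X, Y) = g(\n_{\d_t}X, Y) + g(X, \n_{\d_t}Y),
\]
and, by torsion-freeness together with $[\d_t, V] = 0$ and $[\d_t, e_i] \in E$ (Remark \ref{rmk: splits}), each $\n_{\d_t}(\cdot)$ can be rewritten as $\n_{(\cdot)}\d_t + [\d_t, \cdot]$. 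This turns every metric component into the solution of a first-order ODE along the $\d_t$-geodesics, whose data at $N$ is read off from the normal form of $g|_{t=0}$ recorded in the proof of Lemma \ref{le: components prep} and from the identities $\n_V V|_N = \tfrac12 V$ and $\n_{e_i}V|_N = \o(e_i)V = 0$ (the latter because $e_i \in E = \ker\o$).

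Carrying this out gives the components one at a time. The $t$-derivative of $g(\d_t, V)$ vanishes identically, so $g(\d_t, V) \equiv 1$, which is the last asserted identity $g(V, \d_t) = 1$. Setting $h_i := g(\d_t, e_i)$ and writing $[\d_t, e_i] = \sum_j a_i^j e_j \in E$, the computation yields the \emph{homogeneous} linear system $\d_t h_i = \sum_j a_i^j h_j$ with $h_i|_N = 0$, whence $g(\d_t, e_i) \equiv 0$. For $g(V, V)$ one finds $\d_t\, g(V, V)|_N = -2\,g(\d_t, \n_V V)|_N = -1$, so $g(V, V) = -t + \f t^2$. The most delicate entry is $g(V, e_i)$: it vanishes at $N$, and computing its first $t$-derivative at $N$ and using $g(\d_t, e_i) \equiv 0$, Lemma \ref{le: components prep} ($[V, e_i] \in E$) and $\n_{e_i}V|_N = 0$ reduces it to an expression that vanishes, giving $g(V, e_i) = \f t^2$. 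Finally $g(e_i, e_j) =: \bar g_{ij}$ is a smooth family which is positive definite at $t = 0$, hence, after shrinking $\e$, on all of $(-\e, \e) \times N$. Assembling these reproduces the matrix of $g$ in the frame $(\d_t, V, E)$.

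To reach the frame in the statement I would identify $\grad(t)$. The same ODE argument (commuting $\d_t$ past $V$ and $e_i$) gives $dt(V) = V(t) \equiv 0$ and $dt(e_i) = e_i(t) \equiv 0$, since both solve homogeneous linear ODEs vanishing at $N$. Thus $\grad(t)$ is $g$-orthogonal to $V$ and to $E$ and satisfies $g(\grad(t), \d_t) = dt(\d_t) = 1$. Writing $\grad(t) = a\,\d_t + b\,V + \sum_i c^i e_i$ and solving the resulting linear system with the components just found yields $b = 1$, $c^i = \f t^2$ and $a = t + \f t^2$, which is exactly \eqref{eq: grad(t)} with $Z := \sum_i (\f t)\, e_i \in E$. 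In particular $g(\grad(t), \grad(t)) = \grad(t)(t) = a = t(1 + \f t)$ is nonzero for $t \in (-\e, \e) \setminus \{0\}$ after shrinking $\e$, which is the transversality claim. Rewriting $g$ in the frame $(\grad(t), V, E)$ only alters the $\grad(t)$-row and column and produces the stated matrix; inverting it is routine, since the $\grad(t)$-direction decouples (being $g$-orthogonal to $V$ and $E$) and contributes $1/(t + \f t^2) = \tfrac1t + \f$, while the remaining $(V, E)$-block is inverted by the block formula to give the entries $-\tfrac1t + \f$, $\f t$ and $\bar g^{-1} + \f t^2$ in their stated positions.

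I expect the main obstacle to be the careful bookkeeping of the orders of vanishing, and in particular establishing that $g(V, e_i)$ vanishes to \emph{second} order rather than merely first. This is precisely where Lemma \ref{le: components prep} and the identity $\n_{e_i}V|_N = 0$ are indispensable: without them the $V$--$E$ coupling would be only $\f t$, destroying the off-diagonal structure needed for $g^{-1}$ and, ultimately, for the Carleman estimate. All higher-order remainders are harmless and are absorbed into the symbols $\f t^2$ by Taylor's theorem, using only smoothness of $g$.
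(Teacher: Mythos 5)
Your proposal is correct, and its core computations coincide with the paper's: the identities $g(V,\d_t) \equiv 1$, $g(V,V) = -t + \f t^2$, and the crucial second-order vanishing $g(V,X) = \f t^2$ for $X \in E$ are obtained exactly as in the paper's proof, from metric compatibility, torsion-freeness, $[\d_t,V]=0$, $[\d_t,X]\in E$, Lemma \ref{le: components prep}, $\n_XV|_N = 0$ for $X \in E = \ker\o$, and then Taylor's theorem plus compactness of $N$. Where you genuinely differ is the scaffolding around these computations. The paper never works in the geodesic frame $(\d_t, V, E)$: it computes directly in the orthogonal frame $(\grad(t), V, E)$, where the first row and column of $g$ vanish identically for free because $\grad(t) \perp T(\{t\}\times N)$, and it identifies $\grad(t)$ geometrically --- $\grad(t)|_{t=0}$ is lightlike and tangent to $N$, hence a multiple of $V$, normalised by $g(\grad(t),\d_t)=1$ --- with the $\d_t$-coefficient $\psi = t + \f t^2$ extracted from one further derivative computation, $\d_t\psi|_{t=0} = -\d_t g(V,V)|_{t=0} = 1$. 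You instead prove the additional exact identity $g(\d_t, e_i) \equiv 0$ via a homogeneous linear ODE (valid, since $\d_t$ is lightlike and geodesic and $[\d_t,e_i]\in E$), and then recover \eqref{eq: grad(t)} by pure linear algebra from the completed component table; this costs you one extra component to control but yields a marginally sharper form of \eqref{eq: grad(t)} (the $V$-coefficient is exactly $1$ rather than $1+\f t$). Finally, you invert $g$ by the block formula exploiting the orthogonal decoupling of $\grad(t)$, whereas the paper expands a Neumann series $g^{-1} = A^{-1} - A^{-1}BA^{-1} + \Omega t^2$; both are routine and produce the stated asymptotics, so the difference is one of taste rather than substance.
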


Equation \eqref{eq: grad(t)} implies that $\grad(t)|_{t = 0} \in TN$.
Therefore the splitting  \eqref{eq: splitting TN e} does not extend to $t = 0$.

\begin{proof}
Recall that $[\d_t, W] = 0$ by construction.
By Proposition \ref{prop: null time function}, we know that $g(W, \d_t)|_{t = 0} = g(V, \d_t)|_{t = 0} = 1$.
Since
\begin{align*}
	\d_t g(W, \d_t) 
		&= g(\n_tW, \d_t) + g(W, \n_t \d_t) \\
		&= \frac12 \d_Wg(\d_t, \d_t) \\
		&= 0,
\end{align*}
we conclude that $g(W, \d_t) = 1$.
Using this and our assumption $\n_V V = \frac 12 V$ along $N$, we compute that
\begin{align*}
	\d_t g(W, W)|_{t = 0} 
		&= 2 g(\n_t W, W)|_{t = 0} \\
		&= - 2g(\d_t, \n_W W)|_{t = 0} \\*
		&= - 2g(\d_t, \n_V V)|_{t = 0} \\*
		&= - 1.
\end{align*}
Recall that $g(W, W)|_{t = 0} = g(V, V)|_{t = 0} = 0$.
Compactness of $N$ and Taylor's theorem imply therefore
\[
	g(W, W) = -t + \f t^2,
\]
as claimed.

Note that
\[
	g(\grad(t), X) = dt(X) = 0
\]
for all vectors $X$ tangent to $\{t\} \times N$ for any $t \in (-\e, \e)$.
In other words, $\grad(t)$ is orthogonal to any hypersurface $\{t\} \times N$. 
Since $N = \{0\} \times N \subset M$ is lightlike, it follows that $\grad(t)|_{t = 0} \in TN$ is lightlike and therefore $\grad(t)|_{t = 0} = fV$ for some smooth function $f$ on $N$.
Using $g(V, \d_t)|_{t = 0} = 1$, we conclude that
\[
	f = g(\grad(t), \d_t)|_{t = 0} = 1.
\]
Let now $\psi$ be the smooth function such that $\grad(t) - \psi \d_t \in T(\{t\} \times N)$, for all $t \in (-\e, \e)$.
We already know that $\psi|_{t = 0} = 0$ and we compute
\begin{align*}
	\d_t\psi|_{t = 0}
		&= \d_t g(\grad(t), \psi\d_t)|_{t = 0} \\*
		&= \d_t g(\grad(t), \grad(t))|_{t = 0} \\*
		&= 2 g(\n_t \grad(t), W)|_{t = 0} \\*
		&= 2 \d_t g(\grad(t), W)|_{t = 0} - 2 g(W, \n_tW)|_{t = 0} \\
		&= - \d_t g(W, W)|_{t = 0} \\
		&= 1.
\end{align*}
Taylor's theorem implies that $\psi = t + \f t^2$, which yields the expression \eqref{eq: grad(t)} for $\grad(t)$.
We conclude that
\[	
	g(\grad(t), \grad(t)) = \psi = t + \f t^2.
\]

Since $g(W, X)|_{t = 0} = g(V, X)|_{t = 0} = 0$ for any smooth vector field $X \in E$, it only remains to show that $\d_tg(W, X)|_{t = 0} = 0$.
By Lemma \ref{le: components prep}, we know that $[W, X], [\d_t, X] \in E$.
Using this, we compute
\begin{align*}
	\d_tg(W, X)|_{t = 0} 
		&= g(\n_t W, X)|_{t = 0} + g(W, \n_tX)|_{t = 0} \\
		&= g(\n_W \d_t, X)|_{t = 0} + g(W, \n_X \d_t)|_{t = 0} \\
		&= -g(\d_t, \n_W X)|_{t = 0} - g(\n_X W, \d_t)|_{t = 0} \\
		&= -g(\d_t, [W, X])|_{t = 0} - 2g(\n_X W, \d_t)|_{t = 0} \\
		&= 0,
\end{align*}
since $\d_t|_{t = 0} \perp E|_{t = 0}$.

This completes the computation of the spacetime metric $g$.
In order to compute the asymptotics for the inverse, let us write
\[
	g = A(t) + B(t),
\]
where 
\[
	A(t) = \begin{pmatrix}
		t & 0 & 0 \\
		0 & -t & 0 \\
		0 & 0 & \bar g
	\end{pmatrix}, \quad B(t) = \begin{pmatrix}
		\f t^2 & 0 & 0 \\
		0 & \f t^2 & \f t^2 \\
		0 & \f t^2 & 0
	\end{pmatrix}.
\]
Shrinking $\e$ if necessary, we can ensure that $A^{-1}(t)B(t)$ is sufficiently small for the following computation.
\begin{align*}
	g^{-1}
		&= \left(\id + A^{-1}B\right)^{-1}A^{-1} \\
		&= \sum_{n = 0}^\infty \left(-A^{-1}B\right)^nA^{-1} \\
		&= A^{-1} - A^{-1}BA^{-1} + \Omega t^2,
\end{align*}
where $\Omega$ is a matrix with coefficients which are smooth on $(-\e, \e) \times N$.
Carrying out the matrix multiplication completes the proof.
\end{proof}

Though we did not assume that $N$ was a Cauchy horizon, we have the following consequence of Proposition \ref{prop: metric in time function}:

\begin{cor} \label{cor: N is Cauchy horizon}
There is an $\e > 0$ such that any hypersurface 
\[
	\{\tau\} \times N \subset (-\e, 0) \times N
\]
is an acausal hypersurface in $M$, for which $N$ is the future Cauchy horizon.
Moreover, if $N$ is the future Cauchy horizon of some other closed acausal hypersurface $\S \subset M$, then $D(\S) = D(\{\tau\} \times N)$.
\end{cor}
\begin{proof}
We define $\e > 0$ small enough to ensure that $g(W, W)|_{(-\e, 0) \times N} > 0$ and $g(\grad(t), \grad(t))|_{(-\e, 0) \times N} < 0$.
It follows from Proposition \ref{prop: metric in time function} that such an $\e$ exists and that the hypersurfaces $\{\tau\} \times N$ with $\tau \in (-\e, 0)$ are spacelike.
Hence $t$ is a strictly monotone function along causal curves in $(-\e, 0) \times N$, which implies that all hypersurfaces $\{\tau\} \times N$ are acausal for all $\tau \in (-\e, 0)$.

By compactness of $N$, any inextendible causal curve through $\S$ intersects $\{\tau\} \times N$ for all $\tau \in (-\e, 0)$.
It follows that $(-\e, 0) \times N$ is a globally hyperbolic spacetime with Cauchy hypersurface $\{\tau\}$.
The future boundary of $(-\e, 0) \times N$ is the Cauchy horizon $N$, which we here identify with $\{0\} \times N$.

The fact that $D(\S) = D(\{\tau\} \times N)$ follows by \cite{Petersen2018}*{Prop.\ 3.1}.
\end{proof}

Corollary \ref{cor: N is Cauchy horizon} will be useful in proving Theorem \ref{thm: Killing Extension bh}, since we may now apply \cite{PetersenRacz2018}*{Thm. 1.2} to vacuum spacetimes without further assumptions on $N$ than those in Assumption \ref{ass: N compact}.

The operator $\frac1t \on_W$ will turn out to play an essential role in the Carleman estimate. 
Using Proposition \ref{prop: metric in time function}, we may compute its formal adjoint close to $t = 0$.

\begin{cor} \label{cor: formal adjoint V}
The (formal) adjoint is given by
\[
	\left( \frac1t \on_W \right)^* = - \left( \frac1t \on_W \right) + \f.
\]
\end{cor}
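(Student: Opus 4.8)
The plan is to compute the formal adjoint by integration by parts and then to check that the zeroth-order coefficient which appears is smooth up to $t = 0$. First I would record the elementary but crucial fact that $V(t) = dt(V) = 0$: by the product structure of Remark~\ref{rmk: splits}, $V$ is tangent to each slice $\{t\} \times N$, so $t$ is constant along $V$. Since $\frac1t$ is a real-valued multiplication operator, hence self-adjoint, and $\n_V\frac1t = V(\frac1t) = -t^{-2}V(t) = 0$, the standard adjoint formula $(\n_V)^* = -\n_V - \div(V)$ immediately gives $\left(\frac1t\n_V\right)^* = -\frac1t\n_V - \frac{\div(V)}{t}$. This formula for $(\n_V)^*$ holds exactly as for $\n_t$ earlier: it follows from $\n a = 0$ together with the divergence theorem, where the absence of boundary terms is guaranteed because sections in $C_*^\infty$ vanish to infinite order along $N$.

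It therefore only remains to prove that $\frac{\div(V)}{t}$ is smooth up to $t = 0$, equivalently that $\div(V)|_{t=0} = 0$; the corollary then follows with $\f := -\div(V)/t$, which is smooth by Taylor's theorem and compactness of $N$. To compute $\div(V)|_{t=0}$ I would work in the frame $(\d_t, V, e_2, \dots, e_n)$ with $e_i \in E$, in which Proposition~\ref{prop: null time function} shows that the only nonvanishing components of $g^{-1}$ at $t = 0$ are $g^{\d_t V} = g^{V\d_t} = 1$ and $g^{ij} = \g^{ij}$. Consequently
\[
	\div(V)|_{t=0} = g(\n_t V, V)|_{t=0} + g(\n_V V, \d_t)|_{t=0} + \sum_{i,j} \g^{ij} g(\n_{e_i} V, e_j)|_{t=0}.
\]

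The three terms I would then evaluate as follows. The first equals $\tfrac12 \d_t g(V,V)|_{t=0} = -\tfrac12$, using the identity $\d_t g(V,V)|_{t=0} = -1$ established in the proof of Proposition~\ref{prop: metric in time function}. The second equals $\tfrac12 g(V, \d_t)|_{t=0} = \tfrac12$, using $\n_V V|_{t=0} = \tfrac12 V$ and $g(V, \d_t)|_{t=0} = 1$. The third term vanishes because $N$ is totally geodesic, i.e.\ the null Weingarten map satisfies $b = 0$ by Proposition~\ref{prop: null time function}, so $g(\n_X V, Y)|_{t=0} = 0$ for all $X, Y \in TN$, in particular for $X, Y \in E$. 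Summing gives $\div(V)|_{t=0} = -\tfrac12 + \tfrac12 + 0 = 0$, as needed.

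The one place to take care — and the only genuinely geometric point — is the cancellation $-\tfrac12 + \tfrac12$: it relies on correctly pairing the single off-diagonal inverse-metric entry $g^{\d_t V}$ with both $\n_t V$ and $\n_V V$, so that the two lightlike directions contribute, while the transverse Weingarten contribution drops out by total geodesy. Everything else is routine integration by parts and Taylor expansion.
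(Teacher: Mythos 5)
Your proposal is correct and follows essentially the same route as the paper: compute $\left(\frac1t\n_V\right)^*$ by the standard adjoint formula, reduce everything to showing $\div(V)|_{t=0}=0$ in the frame $(\d_t, V, e_2,\dots,e_n)$, and conclude by Taylor's theorem and compactness of $N$. The only cosmetic difference is that the paper collapses your first two terms into $\d_V g(\d_t,V)|_{t=0}=\d_V(1)=0$ rather than evaluating them separately as $-\tfrac12+\tfrac12$; the ingredients ($\n_VV|_{t=0}=\tfrac12 V$, $g(\d_t,V)=1$, total geodesy of $N$, and $dt(V)=0$) are identical.
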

\begin{proof}
By Proposition \ref{prop: metric in time function}, we have
\begin{align*}
	\div(W)|_{t = 0}
		&= g(\n_t W, V)|_{t = 0} + g(\n_VV, \d_t)|_{t = 0} + \sum_{i, j = 2}^n g^{ij}g(\n_{e_i}V, e_j)|_{t = 0} \\
		&= \d_Vg(\d_t, V)|_{t = 0} \\
		&= 0. 
\end{align*}
By Taylor's theorem, we conclude that $\div(W) = \f t$.
Using $\on a = 0$, this implies
\begin{align*}
	\left( \frac1t \on_W \right)^* 
		&= - \left( \frac1t \on_W \right) - \div\left( \frac Wt \right) \\
		&= - \left( \frac1t \on_W \right) + \frac1{t^2} g(\grad(t), W) - \frac1t \div(W) \\
		&= - \left( \frac1t \on_W \right) + \f,
\end{align*}
as claimed.
\end{proof}

We may now compute the Hessian of the null time function close to $t = 0$.

\begin{prop}[Hessian of the null time function] \label{prop: Hessian}
With respect to the splitting 
\begin{equation}
	T((-\e, \e) \times N)|_{t \neq 0} = \R\grad(t) \oplus \R W \oplus E, \label{eq: splitting}
\end{equation}
the Hessian of the null time function is given by
\[
	\Hess(t) = 	\begin{pmatrix} 
						\frac t2 & 0 & 0 \\
						0 & -\frac t2 & 0 \\
						0 & 0 & \f t
					\end{pmatrix} + Bt^2,
\]
where the coefficients of the $2$-tensor $B$ with respect to \eqref{eq: splitting} are smooth up to $t = 0$ on $(-\e, \e) \times N$.
\end{prop}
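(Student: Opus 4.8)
The plan is to use the defining formula $\Hess(t)(X,Y) = g(\n_X \grad(t), Y)$ and to compute the entries block by block with respect to the frame $\grad(t), V, E$ of the splitting \eqref{eq: splitting}, exploiting that $\grad(t)$ is $g$-orthogonal to the slice directions $V$ and $E$, i.e.\ $g(\grad(t), V) = dt(V) = 0$ and $g(\grad(t), X) = dt(X) = 0$ for $X \in E$. The inputs are Proposition \ref{prop: metric in time function} (the metric coefficients and the expression $\grad(t) = (t + \f t^2)\d_t + (1 + \f t)V + t\W$), together with $\n_{\d_t}\d_t = 0$, $g(V, \d_t) = 1$, and the commutator relations $[\d_t, V] = 0$, $[\d_t, X], [V, X] \in E$ from Remark \ref{rmk: splits} and Lemma \ref{le: components prep}. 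As usual, compactness of $N$ together with Taylor's theorem will upgrade any statement of the form ``the quantity and its first $\d_t$-derivative vanish at $t = 0$'' into an error of type $\f t^2$, which is exactly the content of the $Bt^2$ term.

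The $\grad(t)$-row is the easy part. Since $\n_{\grad(t)}\grad(t) = \tfrac12 \grad\big(g(\grad(t), \grad(t))\big) = \tfrac12 \grad(t + \f t^2)$, pairing with any vector field $X$ gives $\Hess(t)(\grad(t), X) = \tfrac12 X(t + \f t^2)$. Evaluating on $\grad(t)$, $V$ and $X \in E$ and using $V(t) = X(t) = 0$ yields $\Hess(t)(\grad(t), \grad(t)) = \tfrac t2 + \f t^2$ and $\Hess(t)(\grad(t), V) = \Hess(t)(\grad(t), X) = \f t^2$, which is the first row and column of the claimed matrix.

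For the remaining (slice-tangent) entries I would use that $g(\grad(t), Y) = 0$ for $Y$ tangent to the slices, so that $\Hess(t)(X, Y) = -g(\grad(t), \n_X Y)$, and split $\grad(t) = \psi\d_t + W$ with $\psi = t + \f t^2$ and $W = (1 + \f t)V + \f t^2 \W$; the fact that the $E$-component of $W$ is of order $t^2$ is extracted from the orthogonality relation and the auxiliary identity $g(\d_t, X) = \f t^2$ for $X \in E$. This reduces each entry to $\d_t$- and $V$-, $X$-derivatives of the metric coefficients computed in Proposition \ref{prop: metric in time function}. Concretely, $g(\d_t, \n_V V) = -\tfrac12 \d_t g(V, V) = \tfrac12 - \f t$ (using $[\d_t, V] = 0$), and feeding this through $\psi$ together with $g(V, \n_V V) = \tfrac12 V g(V,V) = \f t^2$ gives $\Hess(t)(V, V) = -\tfrac t2 + \f t^2$. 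For the $(V, E)$ block I would use the symmetry $\Hess(t)(V, X) = \Hess(t)(X, V) = -g(\grad(t), \n_X V)$ and the key fact $\n_X V|_{t = 0} = \o(X)V|_{t = 0} = 0$ for $X \in E$ (since $E = \ker\o$), so that $\n_X V = O(t)$; combined with $g(V, \n_X V) = \tfrac12 X g(V,V) = \f t^2$ this forces the entry to be $\f t^2$. The $(E, E)$ block is handled the same way, the only difference being that the term $g(\n_X \d_t, Y)$ (a second fundamental form relative to the geodesic null field $\d_t$) is genuinely $O(1)$, producing the displayed $\f t$ leading term.

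The two auxiliary facts I would record first are $g(\d_t, \d_t) = 0$ on all of $(-\e, \e) \times N$ (since $\d_t$ is a geodesic and $g(\d_t, \d_t)|_N = g(L, L) = 0$) and $g(\d_t, X) = \f t^2$ for $X \in E$, the latter by checking $g(\d_t, X)|_{t=0} = g(L, X) = 0$ and $\d_t g(\d_t, X)|_{t = 0} = g(\d_t, \n_{\d_t}X)|_{t=0} = 0$ using $\n_{\d_t}\d_t = 0$, $[\d_t, X] \in E$ and the vanishing of $g(\d_t, \d_t)$. I expect the \textbf{main obstacle} to be precisely the order-by-order bookkeeping in the $(V, E)$ and $(E, E)$ blocks: done naively (differentiating $X \in E$ by $V$) several $O(t)$ contributions appear and must cancel, whereas differentiating $V$ by the $E$-vector via the Hessian's symmetry makes the smallness $\n_X V = O(t)$ manifest and the $O(t^2)$ bound immediate. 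Getting the leading coefficients ($-\tfrac t2$ for $V$, the nonzero $\f t$ for $E$) to land exactly as claimed, while absorbing everything else into $\f t^2$, is the delicate step; the rest is routine Taylor expansion on the compact hypersurface $N$.
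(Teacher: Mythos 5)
Your proposal is correct and follows essentially the same route as the paper: the $\grad(t)$-row via $\tfrac12\,\d_X g(\grad(t),\grad(t))$, and the slice-tangent entries by expanding $\grad(t)$ in the $\d_t, V, E$ frame using Proposition \ref{prop: metric in time function}, the vanishing $\n_X V|_{t=0}=0$ for $X\in E$, and Taylor's theorem on the compact $N$. The only cosmetic differences are that you compute $\Hess(t)(V,V)$ via metric compatibility (i.e.\ $-g(\grad(t),\n_V V)$ together with $g(\d_t,\n_V V)=-\tfrac12\d_t g(V,V)$) where the paper uses the bracket $[V,\grad(t)]$, and that you first sharpen the $E$-component of $\grad(t)$ to order $t^2$ via the auxiliary identity $g(\d_t,X)=\f t^2$ --- a correct refinement which the paper bypasses by instead absorbing those terms using $\n_Y V|_{t=0}=0$.
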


Note in particular that $B(\grad(t), \grad(t))$ is smooth up to $t = 0$. 
We do not claim that for example $B(\d_t, \d_t)$ is smooth up to $t = 0$.

\begin{remark} \label{rmk: pseudo-convex}
Proposition \ref{prop: Hessian} actually reveals a certain relation between our unique continuation theorem, Theorem \ref{thm: unique continuation}, and H\"{o}rmander's local unique continuation theorem \cite{Hormander1985}*{Thm.\ 28.3.4}.
H\"{o}rmander's pseudo-convexity (for wave equations) is the condition that $\Hess(s)(X, X)|_{s = 0} < 0$ for all lightlike vectors $X$ and a hypersurface defining function $s$.
Note that his condition is \emph{directed}, i.e.\ substituting $s$ with $-s$ gives an opposite condition.
In our setting, Proposition \ref{prop: metric in time function} implies for $t < 0$ that the hypersurfaces $\{t\} \times N$ are \emph{spacelike} and therefore pseudo-convex. 
The lightlike hypersurface $\{0\} \times N$ is \emph{not} pseudo-convex, by Proposition \ref{prop: Hessian}.
Finally, we claim that Proposition \ref{prop: Hessian} implies that $\{t\} \times N$ is pseudo-convex for small enough $t > 0$.
For this, let $X$ be a lightlike vector field such that $X|_{\{t\} \times N} \in T(\{t\} \times N)$ for all $t \in (-\e, \e)$ and normalize it such that $X|_{t = 0} = V$.
Then we have $[\d_t, X] \in T(\{t\} \times N)$ for all $t \in (-\e, \e)$.
We may now use Proposition \ref{prop: Hessian} to compute that
\begin{align*}
	\d_t\left( \Hess(t)(X, X)\right)|_{t = 0} 
		&= \L_t(\Hess(t))(V, V)|_{t = 0} + 2 \Hess(t)([\d_t, X], V)|_{t = 0} \\
		&= \d_t \left( \Hess(t)(W, W) \right)|_{t = 0} - 2 \Hess([\d_t, W], V)|_{t = 0} \\
		&= - \frac12.
\end{align*}
This shows that for small enough $t > 0$, the hypersurfaces $\{t\} \times N$ are pseudo-convex.

To sum up, we have shown that the hypersurfaces $\{t\} \times N$ are \emph{pseudo-convex} in the sense of H\"{o}rmander, if $t \neq 0$ and $\abs{t}$ is sufficiently small, but is \emph{not} pseudo-convex if $t = 0$.
The crucial part in the above computation is the equality
\[
	\L_t(\Hess(t))(V, V)|_{t = 0} = - \frac12.
\]
This is directly related to our assumption that the surface gravity $\kappa$ i non-zero.
Indeed, let us consider the spacetimes in Example \ref{ex: van surf grav}, where $V = \d_x|_{t = 0}$, for which we have
\[
	\L_t \left(\Hess(t)\right)(\d_x, \d_x)|_{t = 0} = 0,
\]
if $m \geq 2$, i.e.\ when the surface gravity $\kappa$ vanishes.
On the other hand, one checks that all hypersurfaces $\{t\} \times N$ with $t \neq 0$ are pseudo-convex in the sense of H\"{o}rmander.
Our non-degeneracy assumption $\kappa \neq 0$ can therefore be seen as ensuring that the \emph{pseudo-convexity is violated only to first order at $t = 0$}, as opposed to when $\kappa = 0$, where the pseudo-convexity is violated to higher order and unique continuation indeed is false by Example \ref{ex: van surf grav}.
\end{remark}

\begin{proof}[Proof of Proposition \ref{prop: Hessian}]
Proposition \ref{prop: metric in time function} and especially equation \eqref{eq: grad(t)} are the essential ingredients in the proof. 
For any smooth vector field $X$, we have
\begin{align*}
	\Hess(t)(X, \grad(t))
		&= g(\n_X \grad(t), \grad(t)) \\
		&= \frac12 \d_X g(\grad(t), \grad(t)) \\
		&= \frac12 \d_X (t + \f t^2).
\end{align*}
It follows that
\begin{align*}
	\Hess(t)(\grad(t), \grad(t))
		&= \frac t2 + \f t^2, \\
	\Hess(t)(W, \grad(t))
		&= \f t^2, \\
	\Hess(t)(Y, \grad(t))
		&= \f t^2,
\end{align*}
for any smooth vector field $Y$ in $E$.
Note that
\[
	[W, \grad(t)] = \f t^2 \d_t + \f t W + t \W.
\] 
We get
\begin{align*}
	\Hess(t)(W, W) 
		&= g(\n_W \grad(t), W) \\*
		&= g([W, \grad(t)], W) + \frac12 \d_{\grad(t)}g(W, W) \\*
		&= \f t^2 g(\d_t, W) + \f t g(W, W) + t g(Z, W) + \frac12 \d_{\grad(t)}(-t + \f t^2) \\
		&= - \frac t 2 + \f t^2.
\end{align*}
We also get
\begin{align*}
	\Hess(t)(Y, W)
		&= g(\n_Y\grad(t), W) \\*
		&= - g(\grad(t), \n_Y W) \\*
		&= (-t + \f t^2)g(\d_t, \n_Y W) + (-1 + \f t)g(W, \n_Y W) + t g(Z, \n_Y W) \\
		&= -\frac12 \d_Y g(W, W) + \f t^2 \\
		&= -\frac12 \d_Y(-t + \f t^2) + \f t^2 \\
		&= \f t^2,
\end{align*}
where $Y$ is a smooth vector field in $E$.
The last component of the Hessian is verified just by noting that
\begin{align*}
	\Hess(t)(Y_1, Y_2)|_{t = 0}
		&= g(\n_{Y_1}\grad(t), Y_2)|_{t = 0} \\
		&= g(\n_{Y_1}V, Y_2)|_{t = 0} \\
		&= 0.
\end{align*}
This completes the proof.
\end{proof}

Let us briefly explain the main role of Proposition \ref{prop: Hessian} in the proof of Theorem \ref{thm: unique continuation}.
Recall from Subsection \ref{subsec: Strategy}, in particular equation \eqref{eq: commutator}, that it will be crucial to compute the commutator
\[
	\left[\bBox, \frac1t \on_{\grad(t)}\right] = - g(\L_{\frac{\grad(t)}t}g, \on^2) + l.o.t.
\]
The leading order term in this expression can be computed using Proposition \ref{prop: Hessian}.
Remarkably, we get the following simple form:
\begin{cor} \label{cor: Lie derivative} 
With respect to the splitting 
\begin{equation}
	T((-\e, \e) \times N)|_{t \neq 0} = \R\grad(t) \oplus \R W \oplus E, \label{eq: splitting cor}
\end{equation}
we have
\[
	\L_{\frac{\grad(t)}t}g = 	\begin{pmatrix} 
						-1 & 0 & 0 \\
						0 & -1 & 0 \\
						0 & 0 & \f
					\end{pmatrix} + Bt,
\]
where the coefficients of the $2$-tensor $B$ with respect to the splitting \eqref{eq: splitting cor} are smooth on $(-\e, \e) \times N$.
\end{cor}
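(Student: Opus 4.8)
The plan is to reduce the whole statement to Proposition \ref{prop: Hessian} by exploiting the fact that the Lie derivative of the metric along a \emph{gradient} field is twice its Hessian, combined with the scaling rule for Lie derivatives. Concretely, I would start from the elementary identity
\[
	\L_{hW}g = h\,\L_W g + dh\otimes W^\flat + W^\flat\otimes dh,
\]
valid for any smooth function $h$ and vector field $W$, where $W^\flat := g(W,\cdot)$. Applying this with $W = \grad(t)$ (so that $W^\flat = dt$) and $h = \frac1t$, and inserting $\L_{\grad(t)}g = 2\Hess(t)$ (which holds because the Hessian is the symmetric tensor $\Hess(t)(X,Y) = g(\n_X\grad(t),Y)$), yields the key structural formula
\[
	\L_{\frac{\grad(t)}t}g = \frac2t\Hess(t) - \frac2{t^2}\,dt\otimes dt.
\]
The entire corollary then follows from plugging in the Hessian from Proposition \ref{prop: Hessian} and understanding the singular term $\frac1{t^2}\,dt\otimes dt$ in the splitting \eqref{eq: splitting cor}.

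Next I would compute $dt\otimes dt$ with respect to $\R\grad(t)\oplus\R V\oplus E$. The decisive observation is that $V$ and every section $X\in E$ are tangent to the level hypersurfaces $\{t\}\times N$, so $dt(V) = dt(X) = 0$, whereas $dt(\grad(t)) = g(\grad(t),\grad(t)) = t + \f t^2$ by Proposition \ref{prop: metric in time function}. Hence $dt\otimes dt$ has a single nonzero component, in the $(\grad(t),\grad(t))$ slot, equal to $(t+\f t^2)^2 = t^2(1+\f t)^2$. Multiplying by $-\frac2{t^2}$ produces $-2(1+\f t)^2 = -2 + \f t$ in that slot and zero elsewhere; crucially, the second-order vanishing of this component exactly absorbs the $\frac1{t^2}$ singularity.

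From Proposition \ref{prop: Hessian} I would read off
\[
	\frac2t\Hess(t) = \begin{pmatrix} 1 & 0 & 0 \\ 0 & -1 & 0 \\ 0 & 0 & \f \end{pmatrix} + 2Bt,
\]
where $B$ has coefficients smooth up to $t=0$ in the splitting. Adding the two pieces, the $(\grad(t),\grad(t))$ entry becomes $1 + (-2) + \f t = -1 + \f t$, the $(V,V)$ entry stays $-1 + \f t$, the $E$-block is $\f + \f t$, and each off-diagonal term is of order $t$ with smooth coefficient. Collecting all the $O(t)$ remainders into a single tensor $Bt$ gives precisely the claimed form.

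The one point demanding care — rather than a genuine obstacle — is the cancellation of the $\frac1{t^2}$ singularity. It works only because $dt\otimes dt$ vanishes to second order in the $(\grad(t),\grad(t))$ direction and vanishes identically in all remaining directions, and because the remainder coefficients are smooth precisely in the $\grad(t)$-frame rather than the $\d_t$-frame, exactly as cautioned in the remark after Proposition \ref{prop: Hessian}. This is the reason the metric and the Hessian were computed with respect to $\grad(t)$ in Propositions \ref{prop: metric in time function} and \ref{prop: Hessian}; staying in this single frame throughout keeps the remaining bookkeeping entirely routine.
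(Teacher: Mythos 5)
Your proof is correct and follows essentially the same route as the paper: both reduce the claim to the identity $\L_{\frac{\grad(t)}t}g = \frac2t\Hess(t) - \frac2{t^2}\,dt\otimes dt$ (the paper obtains it by expanding $g(\n_X(t^{-1}\grad(t)),Y)$ directly, you via the scaling rule for Lie derivatives, which is the same computation), and then both observe that $dt$ annihilates $V$ and $E$ while $dt(\grad(t)) = t + \f t^2$, so that Proposition \ref{prop: Hessian} and Proposition \ref{prop: metric in time function} finish the argument. Your explicit bookkeeping of the cancellation of the $\frac1{t^2}$ singularity in the $(\grad(t),\grad(t))$ slot is exactly the step the paper leaves implicit.
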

\begin{proof}
For any vector fields $X, Y$, we have
\begin{align*}
	\L_{\frac{\grad(t)}t}g(X, Y) 
		&= g\left(\n_X \left(t^{-1}\grad(t)\right), Y \right) + g\left(\n_Y \left(t^{-1}\grad(t)\right), X \right) \\
		&= \frac 2t \Hess(t)(X, Y) - \frac{2}{t^2}dt(X)dt(Y).
\end{align*}
It is therefore clear that 
\[
	\L_{\frac{\grad(t)}t}g(X, Y) = \frac2t \Hess(t)(X, Y)
\]
if either $X$ or $Y$ is tangent to the level hypersurfaces $\{t\} \times N$.
The only term we need to compute is when $X = Y = \grad(t)$.
By Proposition \ref{prop: metric in time function}, it follows that
\[
	dt(\grad(t)) = t + \f t^2.
\]
The statement now follows from Proposition \ref{prop: Hessian}.
\end{proof}

The following is an almost immediate consequence of Proposition \ref{prop: Hessian}.
\begin{cor} \label{cor: wave on t}
For all $\b \in \R$, we have
\[
	t^{-\b}\Box(t^\b) = - \frac{\b^2}{t} + \f \b + \f \b^2.
\]
\end{cor}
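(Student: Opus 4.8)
The plan is to treat $t^\b$ as a scalar function and $\Box = \n^*\n$ as the connection d'Alembertian on functions, which by the local formula $\Box = -g^{\a\b}(\n_{e_\a}\n_{e_\b} - \n_{\n_{e_\a}e_\b})$ reduces for functions to $\Box(t^\b) = -\tr_g\Hess(t^\b)$. I would then reduce everything to the two ingredients already available: the norm $g(\grad(t),\grad(t))$ from Proposition \ref{prop: metric in time function} and the Hessian $\Hess(t)$ from Proposition \ref{prop: Hessian}.

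First I would apply the chain and product rules. Since $\grad(t^\b) = \b t^{\b-1}\grad(t)$, differentiating once more gives
\[
	\Hess(t^\b) = \b(\b-1)t^{\b-2}\, dt\otimes dt + \b t^{\b-1}\Hess(t).
\]
Tracing with $g$ and using $\tr_g(dt\otimes dt) = g(\grad(t),\grad(t))$ yields
\[
	\tr_g\Hess(t^\b) = \b(\b-1)t^{\b-2}\,g(\grad(t),\grad(t)) + \b t^{\b-1}\tr_g\Hess(t).
\]
By Proposition \ref{prop: metric in time function} we have $g(\grad(t),\grad(t)) = t + \f t^2$, so the first summand equals $\b(\b-1)t^{\b-1} + \f(\b^2+\b)t^\b$, after absorbing $\b(\b-1)\f$ into the symbol $\f$.

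Next I would establish $\tr_g\Hess(t) = 1 + \f t$, which is the heart of the argument and where the $\grad(t)$-frame is essential. In the splitting $\R\grad(t)\oplus\R V\oplus E$ the $\grad(t)$-row of $g^{-1}$ from Proposition \ref{prop: metric in time function} has vanishing off-diagonal entries, so in $\tr_g\Hess(t) = \sum_{\mu,\nu}(g^{-1})^{\mu\nu}\Hess(t)_{\mu\nu}$ every cross term involving $\grad(t)$ drops out. The two genuinely singular contributions $(\tfrac1t + \f)(\tfrac t2 + \f t^2)$ and $(-\tfrac1t + \f)(-\tfrac t2 + \f t^2)$ each produce the finite value $\tfrac12 + \f t$, while the $E$-block and all remaining correction terms are $O(t)$ by Proposition \ref{prop: Hessian}; summing gives $1 + \f t$. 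Hence the second summand above becomes $\b t^{\b-1} + \f\b t^\b$.

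Finally I would combine the two summands: the leading singular powers add as $\b(\b-1)+\b = \b^2$, so $\tr_g\Hess(t^\b) = \b^2 t^{\b-1} + \f(\b^2+\b)t^\b$, and negating and multiplying by $t^{-\b}$ gives
\[
	t^{-\b}\Box(t^\b) = -\frac{\b^2}{t} + \f\b + \f\b^2,
\]
after absorbing signs and smooth factors into $\f$. The main obstacle is exactly the step $\tr_g\Hess(t) = 1+\f t$: one must check that each singular $\tfrac1t$ entry of $g^{-1}$ meets only an $O(t)$ entry of $\Hess(t)$, so that all products remain finite, and that no dangerous cross term survives. This is precisely why Proposition \ref{prop: Hessian} is formulated in the $\grad(t)$-frame, where $B$ is smooth, rather than in the $\d_t$-frame, where such smoothness fails.
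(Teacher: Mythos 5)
Your proof is correct and follows essentially the same route as the paper: the paper likewise writes $t^{-\b}\Box(t^\b) = -\frac{\b}{t}\tr_g(\Hess(t)) - \frac{(\b-1)\b}{t^2}g(\grad(t),\grad(t))$ and then invokes Proposition \ref{prop: metric in time function} and Proposition \ref{prop: Hessian}. You have merely spelled out the trace computation $\tr_g\Hess(t) = 1 + \f t$ that the paper leaves implicit, and your accounting of the singular and $O(t)$ terms there is accurate.
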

\begin{proof}[Proof of Corollary \ref{cor: wave on t}]
We compute
\begin{align*}
	t^{-\b} \Box (t^\b)
		&= -t^{-\b}\div\left( \b t^{\b-1} \grad(t)\right) \\
		&= - \frac{\b}{t} \tr_g(\Hess(t)) - \frac{(\b-1)\b}{t^2}g(\grad(t), \grad(t)). 
\end{align*}
Proposition \ref{prop: metric in time function} and Proposition \ref{prop: Hessian} now imply the statement.
\end{proof}

Using this, we make the following useful observation.

\begin{cor} \label{cor: formal adjoint grad t}
The (formal) adjoint is given by
\[
	\left(\frac{1}t \on_{\grad(t)} \right)^* 
		= - \frac{1}t \on_{\grad(t)} + \f.
\]
\end{cor}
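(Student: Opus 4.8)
The plan is to compute the formal adjoint of the first-order operator $\frac{1}{t}\n_{\grad(t)}$ directly, using the general formula for the adjoint of $\n_X$ for a vector field $X$, and then identify the divergence term via the Hessian computation already at our disposal. Recall that for any smooth vector field $X$ and compatible connection (so that $\n a = 0$), the formal adjoint of $\n_X$ acting on sections is $(\n_X)^* = -\n_X - \div(X)$. Applying this with $X = \frac{\grad(t)}{t}$ gives
\[
	\left(\frac{1}{t}\n_{\grad(t)}\right)^* = -\frac{1}{t}\n_{\grad(t)} - \div\left(\frac{\grad(t)}{t}\right),
\]
so the entire content of the corollary reduces to showing that the scalar function $\div\!\left(\frac{\grad(t)}{t}\right)$ is of the form $\f$, i.e.\ smooth up to $t = 0$ on $(-\e,\e)\times N$.

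First I would expand the divergence by the product rule:
\[
	\div\left(\frac{\grad(t)}{t}\right) = \frac{1}{t}\div(\grad(t)) - \frac{1}{t^2}g(\grad(t),\grad(t)) = \frac{1}{t}\tr_g(\Hess(t)) - \frac{1}{t^2}g(\grad(t),\grad(t)).
\]
Here $\div(\grad(t)) = \tr_g(\Hess(t)) = \Box t$ (up to sign conventions), which is exactly the quantity controlled by Proposition \ref{prop: Hessian} and Corollary \ref{cor: wave on t}. Indeed, the trace of the Hessian with respect to the splitting \eqref{eq: splitting cor} picks up $\frac{1}{t}\cdot\frac{t}{2} - \frac{1}{t}\cdot\frac{t}{2}$ from the $\grad(t)$ and $V$ blocks (which cancel against each other because $g^{-1}$ pairs these two off-diagonally), leaving the $E$-block contribution $\f t$ together with the $Bt^2$ error; thus $\tr_g(\Hess(t)) = \f t$ up to the error term, and dividing by $t$ leaves something smooth.

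The cleanest route, avoiding recomputing the trace by hand, is simply to quote Corollary \ref{cor: wave on t} with $\b = 1$, which gives $t^{-1}\Box(t) = -\frac{1}{t} + \f$, and Proposition \ref{prop: metric in time function}, which gives $g(\grad(t),\grad(t)) = t + \f t^2$. Feeding these into the expression above yields
\[
	\div\left(\frac{\grad(t)}{t}\right) = \frac{1}{t}\tr_g(\Hess(t)) - \frac{1}{t^2}(t + \f t^2),
\]
and one checks that the singular $-\frac{1}{t}$ contributions cancel, leaving only terms smooth up to $t = 0$; hence $\div\!\left(\frac{\grad(t)}{t}\right) = \f$. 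Substituting back completes the proof. The only point requiring care—and the mild obstacle—is bookkeeping the sign and normalisation conventions so that the singular pieces cancel exactly rather than doubling; since the cancellation of the $\pm\frac{t}{2}$ Hessian blocks is precisely what forces the leading $\frac{1}{t}$ terms to drop out, I would verify this cancellation explicitly rather than rely on it implicitly.
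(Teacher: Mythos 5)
Your overall route is exactly the paper's: reduce to showing that $\div\left(\frac{\grad(t)}{t}\right)$ is of the form $\f$, expand by the product rule, and quote Corollary \ref{cor: wave on t} (with $\b = 1$) and Proposition \ref{prop: metric in time function}. With the correct signs this closes, and it is precisely the proof in the paper. However, your explicit verification of the cancellation contains a genuine error. In the splitting $\R\grad(t) \oplus \R V \oplus E$ — the one used in Proposition \ref{prop: metric in time function}, Proposition \ref{prop: Hessian} and \eqref{eq: splitting cor} — the inverse metric does \emph{not} pair $\grad(t)$ and $V$ off-diagonally: by Proposition \ref{prop: metric in time function} it is diagonal to leading order, with $g^{00} = \frac1t + \f$, $g^{11} = -\frac1t + \f$ and $g^{01} = 0$. (You are thinking of the splitting $\R\d_t \oplus \R V \oplus E$ of \eqref{eq: tangent bundle split}, where indeed $g(\d_t, V) = 1$; the two frames must not be conflated, which is the very reason the paper switches to the $\grad(t)$ frame.) Consequently the two singular blocks of the Hessian contribute
\[
	g^{00}\Hess(t)_{00} + g^{11}\Hess(t)_{11} = \frac1t\cdot\frac t2 + \left(-\frac1t\right)\cdot\left(-\frac t2\right) = 1 + \f t ,
\]
i.e.\ they \emph{add} rather than cancel, and $\tr_g(\Hess(t)) = 1 + \f t$, not $\f t$.

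This is not a cosmetic slip: if $\tr_g(\Hess(t))$ were really $\f t$, your own display would give
\[
	\div\left(\frac{\grad(t)}{t}\right) = \f - \frac1{t^2}\left(t + \f t^2\right) = -\frac1t + \f ,
\]
which is singular, so the corollary would fail — and your claimed trace also contradicts Corollary \ref{cor: wave on t}, which you invoke in the same breath and which (in the paper's convention $\Box t = -\div(\grad(t)) = -\tr_g \Hess(t)$) gives $\Box t = -1 + \f t$, i.e.\ $\tr_g(\Hess(t)) = 1 + \f t$. The true mechanism of the cancellation is not internal to the Hessian trace; it is between the two terms of the product rule:
\[
	\div\left(\frac{\grad(t)}{t}\right) = \frac1t\tr_g(\Hess(t)) - \frac1{t^2}g(\grad(t),\grad(t)) = \left(\frac1t + \f\right) + \left(-\frac1t + \f\right) = \f .
\]
Once you replace your hand computation of the trace by this (or simply quote $\Box t = -1 + \f t$ from Corollary \ref{cor: wave on t} and keep track of the sign in $\Box = \n^*\n = -\div \circ \grad$ on functions), your argument is complete and coincides with the paper's proof.
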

\begin{proof}[Proof of Corollary \ref{cor: formal adjoint grad t}]
Using $\on a = 0$, Proposition \ref{prop: metric in time function} and Corollary \ref{cor: wave on t} imply
\begin{align*}
	\left(\frac1t \on_{\grad(t)} \right)^* 
		&= - \frac1t \on_{\grad(t)} - \div\left(\frac{\grad(t)}{t} \right) \\
		&= - \frac1t \on_{\grad(t)} + \frac1{t^2}g(\grad(t), \grad(t)) + \frac1t \Box (t) \\
		&= - \frac1t \on_{\grad(t)} + \frac1t + \f - \frac1t + \f \\
		&= - \frac1t \on_{\grad(t)} + \f. \qedhere
\end{align*}
\end{proof}

The following corollary also turns out to be important later. 

\begin{cor} \label{cor: VV grad grad}
We have
\begin{align*}
	\n_{\grad(t)}\grad(t) 
		&= \frac12 \grad(t) + \f t^2 \d_t + \f t W + t Z, \\
	\n_W W
		&= \frac12 \grad(t) + \f t^2 \d_t + \f t W + t Z.
\end{align*}
\end{cor}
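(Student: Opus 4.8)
The plan is to pin down each of the two vector fields by computing its inner products against the frame $\grad(t), V$ and against sections of $E$, and then to recover the components by inverting the block metric of Proposition \ref{prop: metric in time function}. The only genuine subtlety is that $g^{-1}$ carries $\tfrac1t$ singularities, so the orders in $t$ appearing on the right-hand sides must be tracked with care. Throughout I would use that $\d_V t = g(\grad(t), V) = 0$ and $\d_X t = g(\grad(t), X) = 0$ for $X \in E$, and finally pass between $\grad(t)$ and $\d_t$ using \eqref{eq: grad(t)}.

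For $\n_{\grad(t)}\grad(t)$ the cleanest route is the identity $\n_{\grad(t)}\grad(t) = \tfrac12 \grad\!\big(g(\grad(t), \grad(t))\big)$. Since $g(\grad(t),\grad(t)) = t + \f t^2$ by Proposition \ref{prop: metric in time function}, I compute the three pairings $g(\n_{\grad(t)}\grad(t), \grad(t)) = \tfrac12 \d_{\grad(t)}(t + \f t^2) = \tfrac t2 + \f t^2$, then $g(\n_{\grad(t)}\grad(t), V) = \tfrac12 \d_V(t+\f t^2) = \f t^2$, and likewise $g(\n_{\grad(t)}\grad(t), X) = \f t^2$ for $X \in E$. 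Dividing the first by $g(\grad(t),\grad(t)) = t + \f t^2$ yields the $\grad(t)$-component $\tfrac12 + \f t$, while feeding the pair $(\f t^2, \f t^2)$ through the $V$–$E$ block of $g^{-1}$ from Proposition \ref{prop: metric in time function} produces a $V$-component of order $\f t$ and an $E$-component of order $t^2 Z$. Expanding $\f t\,\grad(t)$ via \eqref{eq: grad(t)} then collapses everything to $\tfrac12 \grad(t) + \f t^2 \d_t + \f t V + tZ$.

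For $\n_V V$, which is not a gradient field, I would instead extract the inner products directly from the product rule $\d_Y g(A,B) = g(\n_Y A, B) + g(A, \n_Y B)$. Using $g(V, \grad(t)) = 0$ together with $\Hess(t)(V,V) = -\tfrac t2 + \f t^2$ from Proposition \ref{prop: Hessian} gives $g(\n_V V, \grad(t)) = -\Hess(t)(V,V) = \tfrac t2 + \f t^2$; differentiating $g(V,V) = -t + \f t^2$ gives $g(\n_V V, V) = \f t^2$; and writing $\n_V X = \n_X V + [V,X]$ with $[V,X] \in E$ (Lemma \ref{le: components prep}), both terms pair with $V$ to order $\f t^2$, so $g(\n_V V, X) = \f t^2$ for $X \in E$. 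These three values coincide with those of the first computation, so the identical metric inversion delivers the same decomposition and hence the second formula. As a consistency check, both expressions reduce at $t = 0$ to $\tfrac12 \grad(t)|_N = \tfrac12 V|_N$, recovering $\n_V V|_N = \tfrac12 V|_N$. The main obstacle is precisely the bookkeeping of powers of $t$ across the singular inverse: the cancellation in the $V$–$E$ block, whereby the $-\tfrac1t$ entry of $g^{-1}$ meets the $\f t^2$ on the right-hand side to produce a component of order $t$ rather than order $1$, is what makes the stated (and, for the Carleman estimate, essential) $t$-decay of the components come out correctly.
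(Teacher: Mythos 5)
Your proposal is correct and follows essentially the same route as the paper: both compute the pairings of $\n_{\grad(t)}\grad(t)$ and $\n_V V$ against $\grad(t)$, $V$ and $E$ (using Proposition \ref{prop: metric in time function}, Proposition \ref{prop: Hessian} and metric compatibility) and then invert the block metric, expanding the residual $\f t\,\grad(t)$ term via \eqref{eq: grad(t)} at the end. The only cosmetic differences are that the paper contracts $\Hess(t)(\grad(t),\cdot)$ with $g^{-1}$ where you use the equivalent identity $\n_{\grad(t)}\grad(t)=\tfrac12\grad\big(g(\grad(t),\grad(t))\big)$, and your bound $g(\n_V V, X)=\f t^2$ for $X\in E$ (obtained from Lemma \ref{le: components prep}) is slightly sharper than the $\f t$ the paper records, which is harmless since both suffice for the stated conclusion.
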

\begin{proof}
Note that
\[
	\n_{\grad(t)}\grad(t) = \sum_{\b, \gamma = 0}^n\Hess(t)(\grad(t), e_\b)g^{\b \gamma}e_\gamma.
\]
Proposition \ref{prop: metric in time function} and Proposition \ref{prop: Hessian} now imply the first statement.
By Proposition \ref{prop: Hessian}, we have
\begin{align*}
	g(\n_W W, \grad(t))
		&= - \Hess(t)(W, W) \\
		&= \frac t2 + \f t^2, \\
	g(\n_W W, W)
		&= \frac12 \d_W g(W, W) \\
		&= \f t^2, \\
	g(\n_W W, X)
		&= \f t,
\end{align*}
for any smooth vector field $X \in E$.
Proposition \ref{prop: metric in time function} now implies the second statement.
\end{proof}

We conclude with the following observation.

\begin{cor} \label{cor: wave on grad(t) over t}
The vector field $t \Box\left(\frac{\grad(t)}t\right)$ is smooth on $(-\e, \e) \times N$ and 
\[
	t \Box\left(\frac{\grad(t)}t\right)|_{t = 0} \in TN.
\]
\end{cor}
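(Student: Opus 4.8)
The plan is to apply the Leibniz rule for the connection wave operator together with Corollaries \ref{cor: wave on t} and \ref{cor: VV grad grad}, reducing the entire statement to the behaviour of $\Box\grad(t)$. Recall the general identity $\Box(fs) = (\Box f)s - 2\n_{\grad f}s + f\Box s$, valid for any function $f$ and section $s$, which follows from $\n a = 0$ and the product rule. Applying it with $f = 1/t$ and $s = \grad(t)$, and using $\grad(1/t) = -t^{-2}\grad(t)$, gives
\[
	t\Box\left(\frac{\grad(t)}t\right) = t\left(\Box\tfrac1t\right)\grad(t) + \frac2t\n_{\grad(t)}\grad(t) + \Box\grad(t).
\]
By Corollary \ref{cor: wave on t} with $\b = -1$ one has $t\,\Box(t^{-1}) = -\frac1t + \f$, while Corollary \ref{cor: VV grad grad} gives $\frac2t\n_{\grad(t)}\grad(t) = \frac1t\grad(t) + \f t\d_t + \f V + 2Z$. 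The crucial observation is that the two singular contributions $-\frac1t\grad(t)$ and $\frac1t\grad(t)$ cancel, leaving
\[
	t\Box\left(\frac{\grad(t)}t\right) = \f\,\grad(t) + \f t\,\d_t + \f V + 2Z + \Box\grad(t).
\]

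For the smoothness claim I would then argue that $\grad(t)$ is a smooth vector field on $(-\e,\e)\times N$, since $t$ and $g$ are smooth there, and that $\Box = \n^*\n$ maps smooth sections of $TM$ to smooth sections; hence $\Box\grad(t)$ is smooth up to $t=0$, despite the singular appearance of the metric components in the frame $\{\grad(t), V, E\}$ from Proposition \ref{prop: metric in time function}. Since all remaining terms on the right-hand side are manifestly smooth, this settles the first assertion.

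For the second claim it remains to check that each term lies in $TN$ at $t=0$, i.e.\ has vanishing $\d_t$-component, equivalently is annihilated by $dt$. Using $\grad(t)|_{t=0} = V \in TN$ from \eqref{eq: grad(t)}, the terms $\f\grad(t)$, $\f V$ and $2Z$ lie in $TN$, while $\f t\d_t$ vanishes at $t=0$. The one genuine point is $\Box\grad(t)|_{t=0}\in TN$, and here I would invoke the Bochner identity for the Levi-Civita connection, $\Box\grad(t) = \grad(\Box t) - \Ric(\grad t)^\sharp$, obtained by commuting the Hodge and connection Laplacians on the exact one-form $dt$. Since $\Box t = -1 + \f t$ by Corollary \ref{cor: wave on t}, one computes $dt(\grad(\Box t))|_{t=0} = \d_{\grad(t)}(\Box t)|_{t=0} = 0$, using $g(\grad t,\grad t)|_{t=0} = 0$; and $dt(\Ric(\grad t)^\sharp)|_{t=0} = \Ric(V, V) = 0$ by Assumption \ref{ass: N compact}. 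Hence $dt(\Box\grad(t))|_{t=0} = 0$, so $\Box\grad(t)|_{t=0}\in TN$, completing the argument.

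I expect the main obstacle to be the identification $\Box\grad(t)|_{t=0}\in TN$: this is precisely where the Ricci condition $\Ric(V,\cdot)|_{TN} = 0$ from Assumption \ref{ass: N compact} must enter, and the cleanest route is through the Bochner commutation formula rather than a direct frame computation, the latter being obscured by the $1/t$ singularities of the metric in Proposition \ref{prop: metric in time function}. The rest is the purely algebraic cancellation of the $\frac1t\grad(t)$ terms, which is what makes the weighted expression regular in the first place.
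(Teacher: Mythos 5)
Your proof is correct and follows essentially the same route as the paper: the Leibniz expansion of $t\Box\left(\frac{\grad(t)}{t}\right)$, the cancellation of the singular $\frac1t\grad(t)$ terms via Corollaries \ref{cor: wave on t} and \ref{cor: VV grad grad}, and the Weitzenböck identity $\Box\grad(t) = \grad(\Box t) - \Ric(\grad(t))$ combined with $\Ric(V,V)|_{t=0}=0$ for the tangency claim. Your check via $dt(\Box\grad(t))|_{t=0}=0$ is equivalent to the paper's computation of $g(\Box\grad(t),V)|_{t=0}$, since $dt = g(\grad(t),\cdot)$ and $\grad(t)|_{t=0}=V$.
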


Let us emphasise that $\Box$ in Corollary \ref{cor: wave on grad(t) over t} is defined using the Levi-Civita connection with respect to the \emph{indefinite} metric $g$, as opposed to $\bBox$ on the vector bundle $F$, which was defined using a connection which was compatible with the positive definite metric $a$.

\begin{proof}
By Corollary \ref{cor: wave on t} and Corollary \ref{cor: VV grad grad}, we obtain
\begin{align*}
	t \Box\left(\frac{\grad(t)}t\right)
		&= \Box \grad(t) + t \Box\left(\frac1t \right)\grad(t) - 2t \n_{\grad\left(\frac1t \right)}\grad(t) \\
		&= \Box \grad(t) - \frac1t \grad(t) + \frac2t\n_{\grad(t)}\grad(t) + \f \grad(t) \\
		&= \Box \grad(t) + \f \grad(t) + \f W + Z.
\end{align*}
By the Weitzenböck formula we have
\[
	\Box \grad(t) = \grad(\Box t) - \Ric(\grad(t)).
\]
Thus, by Corollary \ref{cor: wave on t} we conclude
\begin{align*}
	g(\Box \grad(t), V)|_{t = 0}
		&= \d_V \Box t|_{t = 0} - \Ric(\grad(t), V)|_{t = 0} \\
		&= \d_V(-1 + \f t)|_{t = 0} - \Ric(V, V)|_{t = 0} \\
		&= 0.
\end{align*}
This proves that $\Box \grad(t)|_{t = 0} \in TN$ and hence
\[
	t \Box\left(\frac{\grad(t)}t\right)|_{t = 0} \in TN,
\]
as claimed.
\end{proof}

\subsection{Proof of the Carleman estimate} \label{subsec: proof Carleman}

In this subsection we prove Theorem \ref{thm: Carleman estimate}.
We first rewrite Theorem \ref{thm: Carleman estimate} in terms of the conjugate operator. 

\begin{definition}[The conjugate wave operator]
For any $\a \in \N$, define
\[
	\bBox_\a(u) := t^{-\a}\bBox (t^\a u)
\]
for any $u \in C^\infty((-\e, \e) \times N, F)$.
\end{definition}

\begin{remark}
By substituting $u$ with $t^\a u$, we note that Theorem \ref{thm: Carleman estimate} is equivalent to the following statement:

\emph{Let $M$ and $N$ satisfy Assumption \ref{ass: N compact} and let $F \to M$ be a real or complex vector bundle.
There are constants $\e, \a_0, C > 0$ such that 
\begin{equation} \label{eq: version of Carleman estimate}
	\norm{\bBox_\a u}_{L^2} \geq \sqrt \a C \norm{u}_{H^1_\a}
\end{equation}
for all $u \in C_*^\infty((-\e, \e) \times N, F)$ and all integers $\a \geq \a_0$.}
\end{remark}

The remainder of this subsection is devoted to proving the estimate \eqref{eq: version of Carleman estimate}.
We split $\bBox_\a$ into formally self-adjoint and anti-self-adjoint parts $\bBox_\a^s$ and $\bBox_\a^a$ respectively, i.e.\
\begin{align*}
	\bBox_\a^s 
		&:= \frac{\bBox_\a + (\bBox_\a)^*}2, \\
	\bBox_\a^{a}
		&:= \frac{\bBox_\a - (\bBox_\a)^*}2.
\end{align*}
It follows that $(\bBox^s_\a)^* = \bBox^s_\a$ and $(\bBox^a_\a)^* = - \bBox^a_\a$.
Equation \eqref{eq: Box alpha} implies that
\begin{align}
	\norm{\bBox_\a u}^2_{L^2} = \norm{\bBox_\a^s u}^2_{L^2} + \norm{\bBox_\a^a u}^2_{L^2} + \ldr{[\bBox_\a^s, \bBox_\a^a] u, u}_{L^2}.
\end{align}
The proof of Theorem \ref{thm: Carleman estimate} consists of computing these terms using the results of the previous subsection and proving suitable lower bounds.

\begin{lemma}[The first estimates] \label{le: first estimates}
There are constants $\e_0, \a_0, C > 0$, such that 
\begin{align*}
	\norm{\bBox_\a^s u}_{L^2} 
		&\geq \norm{\left(\bBox - \frac{\a^2}t \right) u}_{L^2} - \e \a C \norm{u}_{H^1_\a}, \\
	\norm{\bBox_\a^a u}_{L^2}
		&\geq 2 \a \norm{\frac1t \on_{\grad(t)}u}_{L^2} - \e C \norm{u}_{H^1_\a}, \\
	\ldr{[\bBox_\a^s, \bBox_\a^a]u, u}_{L^2}
		&\geq -2\a \ldr{\left[\bBox, \frac1t\on_{\grad(t)}\right]u, u}_{L^2} + 2\a \norm{\frac{\a u}t}_{L^2}^2 - \e \a C \norm{u}_{H^1_\a}^2,
\end{align*}
for all $u \in C_*^\infty((-\e, \e) \times N, F)$, for any $\e \in (0, \e_0)$ and any integer $\a \geq \a_0$.
\end{lemma}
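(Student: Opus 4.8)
The strategy is to compute each of the three terms $\Box_\a^s u$, $\Box_\a^a u$, and $\ldr{[\Box_\a^s, \Box_\a^a]u, u}$ explicitly by expanding the conjugate operator $\Box_\a$ and throwing the lower-order and $t$-subleading pieces into the error term $\e \a C \norm{u}_{H^1_\a}$. The starting point is the identity
\[
	\Box_\a u = t^{-\a}\Box(t^\a u) = \Box u + t^{-\a}[\Box, t^\a]u,
\]
and I would use the Leibniz-type expansion of $\Box$ acting on a product together with Corollary \ref{cor: wave on t} to rewrite $t^{-\a}\Box(t^\a u)$ in the schematic form $\Box u - \frac{\a^2}{t}u - \frac{2\a}{t}\n_{\grad(t)}u + (\text{terms with coefficient }\f\a \text{ or }\f\a^2)$. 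The point is that $t^{-\a}\Box(t^\a u) = \Box u - \frac{2\a}{t}\n_{\grad(t)}u + (t^{-\a}\Box(t^\a))u$, and Corollary \ref{cor: wave on t} controls the scalar factor $t^{-\a}\Box(t^\a) = -\frac{\a^2}{t} + \f\a + \f\a^2$.

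**Splitting into self- and anti-self-adjoint parts.** Once $\Box_\a$ is written in this form, I would identify which terms are formally self-adjoint and which are anti-self-adjoint. The operator $\Box$ is formally self-adjoint, and $\frac{\a^2}{t}$ is a self-adjoint multiplication operator, so these land in $\Box_\a^s$. The transport term $-\frac{2\a}{t}\n_{\grad(t)}$ is (up to a correction) anti-self-adjoint, which is exactly what Corollary \ref{cor: formal adjoint grad t} quantifies: $\left(\frac1t\n_{\grad(t)}\right)^* = -\frac1t\n_{\grad(t)} + \f$. Taking symmetric and anti-symmetric combinations, the correction $\f$ (smooth up to $t=0$) and the $\f\a, \f\a^2$ terms contribute only to errors. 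To bound these errors by $\e \a C \norm{u}_{H^1_\a}$ (or $\e C \norm{u}_{H^1_\a}$), I would use that $\f$ is bounded on $(-\e,\e)\times N$, that $\frac{u}{t}$, $\frac1t\n_V u$, $\frac1t\n_{\grad(t)}u$, $\bar\n u$ are precisely the quantities appearing in $\norm{u}_{H^1_\a}$, and that extra factors of $t$ can be absorbed into the small parameter $\e$ (e.g. a term like $\f \a u$ is bounded by $\e\,\a\,\norm{\frac{\a u}{t}}_{L^2}$ after writing $\f\a u = \f t\cdot \frac{\a u}{t}$ and using $\abs{t}<\e$). This is how the factor $\e$ enters the claimed bounds.

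**The commutator term.** For the third estimate, expanding $\Box_\a^s \Box_\a^a - \Box_\a^a \Box_\a^s$ with $\Box_\a^s \approx \Box - \frac{\a^2}{t}$ and $\Box_\a^a \approx -\frac{2\a}{t}\n_{\grad(t)}$, the leading commutator is $-2\a\left[\Box,\frac1t\n_{\grad(t)}\right]$, which accounts for the first term on the right-hand side of the claimed inequality. The cross-commutator $\left[-\frac{\a^2}{t}, -\frac{2\a}{t}\n_{\grad(t)}\right]$ produces the positive term $2\a\norm{\frac{\a u}{t}}_{L^2}^2$: computing $\left[\frac1t, \frac1t\n_{\grad(t)}\right] = -\frac1t\n_{\grad(t)}(\frac1t) = \frac{1}{t^3}g(\grad(t),\grad(t)) = \frac{1}{t^2} + \f$ by Proposition \ref{prop: metric in time function}, so the double factor $2\a\cdot\a^2$ against $\frac{1}{t^2}$ pairs against $u$ to give $2\a\norm{\frac{\a u}{t}}^2_{L^2}$ after an integration by parts, with all remaining pieces absorbed into $\e\a C\norm{u}_{H^1_\a}^2$.

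**Main obstacle.** The delicate point will be bookkeeping the commutator $[\Box_\a^s,\Box_\a^a]$ carefully enough to extract the \emph{exact} coefficient $2\a$ on the positive term $\norm{\frac{\a u}{t}}_{L^2}^2$ while demonstrating that every other contribution is genuinely of size $\e\a C\norm{u}_{H^1_\a}^2$ and not of the critical size $\a\norm{\frac{\a u}{t}}^2$. The error terms here carry up to three powers of $\a$ paired with up to $\frac{1}{t^2}$ singularities, so one must track both the $\a$-power and the $t$-power of each remainder and confirm that each is suppressed by a clean factor of $t$ (hence $\e$) relative to the leading positive term. This is where Proposition \ref{prop: metric in time function}, Corollary \ref{cor: formal adjoint grad t}, and the precise remainder structure ``$+\f$'' in the formal adjoints do the essential work; the integrations by parts needed to move derivatives onto $u$ and to expose the sign of $\norm{\frac{\a u}{t}}^2$ generate boundary-free contributions precisely because $u \in C_*^\infty$, and I would use the $\f t$ smallness systematically to close every estimate once $\e$ is chosen small enough.
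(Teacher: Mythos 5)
Your proposal is correct and follows essentially the same route as the paper's proof: the same expansion of $\Box_\a$ via Corollary \ref{cor: wave on t}, the same identification of $\Box_\a^s$ and $\Box_\a^a$ via Corollary \ref{cor: formal adjoint grad t}, the same absorption of errors through the factor $t$ (hence $\e$) into $\norm{u}_{H^1_\a}$, and the same source of the positive term $2\a\norm{\frac{\a u}t}_{L^2}^2$, namely the cross-commutator $\left[-\frac{\a^2}t, -\frac{2\a}t\n_{\grad(t)}\right]$ evaluated using $g(\grad(t),\grad(t)) = t + \f t^2$ from Proposition \ref{prop: metric in time function}. Two cosmetic points: that cross-commutator is a multiplication operator, so no integration by parts is needed to expose the positive term, and it equals $\frac{2\a^3}{t^2} + \frac{\f \a^3}{t}$ rather than $\frac{2\a^3}{t^2} + \f\a^3$, which is still harmlessly absorbable into $\e\a C\norm{u}_{H^1_\a}^2$.
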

\begin{proof}
By Corollary \ref{cor: wave on t}, we first observe
\begin{align*}
	\bBox_\a 
		&= \bBox - \frac{2\a}t \on_{\grad(t)} + t^{-\a}\bBox(t^\a), \\
		&= \bBox - \frac{2\a}t \on_{\grad(t)} - \frac {\a^2}t + p_2(\a),
\end{align*}
where $p_m(\a)$ is some polynomial in $\a$ of order $m$ with smooth coefficients. 
The exact coefficients of $p_m$ will not be important and might change from term to term.
By Corollary \ref{cor: formal adjoint grad t}, we conclude that
\begin{align*}
	\bBox_\a^s 
		&= \bBox - \frac{\a^2}t + p_2(\a), \\
	\bBox_\a^a
		&= - \frac{2\a}t \on_{\grad(t)} + p_1(\a).
\end{align*}
Let $C > 0$ denote some constant which may change from term to term.
Since
\begin{align*}
	- \norm{p_2(\a) u}_{L^2} 
		&\geq - \e \a C \norm{\frac {\a u}t}_{L^2} \\*
		&\geq - \e \a C \norm{u}_{H^1_\a}, \\
	- \norm{p_1(\a) u}_{L^2}
		&\geq - \e C \norm{u}_{H^1_\a},
\end{align*} 
for large enough $\a$, the first two estimates are clear.
By equation \eqref{eq: grad(t)}, we get
\begin{align}
	[\bBox_\a^s, \bBox_\a^a]
		&= -2\a\left[\bBox, \frac1t \on_{\grad(t)}\right] + \frac{2\a^3}t \left[\frac1t, \on_{\grad(t)}\right] + \frac{p_3(\a)}t + [\bBox, p_1(\a)], \nonumber \\
		&= -2\a\left[\bBox, \frac1t \on_{\grad(t)}\right] + \frac{2\a^3}{t^2} + \frac{p_3(\a)}t - 2 \on_{\grad(p_1(\a))}. \label{eq: commutator in proof}
\end{align}
We first observe that
\begin{align*}
	\ldr{\frac{p_3(\a)}t u, u}_{L^2} \geq - \e \a C \norm{\frac {\a u} t}_{L^2}^2 \geq - \e \a C \norm{u}_{H^1_\a}^2.
\end{align*}
To estimate the last term in equation \eqref{eq: commutator in proof}, note that by equation \eqref{eq: grad(t)} we may schematically write
\begin{align*}
	\grad(p_1(\a)) 
		&= p_1(\a)\d_t + p_1(\a)W + p_1(\a)Z \\
		&= p_1(\a) \frac{\grad(t)}t + p_1(\a) \frac Wt + p_1(\a) Z.
\end{align*}
This implies
\begin{align*}
	-2\ldr{\on_{\grad(p_1(\a))}u, u}_{L^2}
		&\geq - C \abs{\ldr{\frac1t \on_{\grad(t)}u, \a u}_{L^2}} - C \abs{\ldr{\frac1t \on_Wu, \a u}_{L^2}} \\*
		&\qquad - C \abs{\ldr{\on_Zu, \a u}_{L^2}} \\
		&\geq - \e \a C \norm{u}_{H^1_\a}^2,
\end{align*}
which completes the proof of the lemma.
\end{proof}

Since we will choose $\e$ very small, the terms of the form $-\e \a C\norm{u}_{H^1_\a}$ will be small compared to the rest.
From Lemma \ref{le: first estimates}, the importance of computing the commutator
\begin{equation} \label{eq: commutator grad}
	\left[\bBox, \frac1t\on_{\grad(t)}\right]
\end{equation}
is now clear.
For this, note the following lemma:
\begin{lemma}\label{le: comm X}
For any smooth vector field $X$, we have
\begin{align*}
	[\bBox, \on_X] 
		& = - g(\L_X g, \on^2) + \on_{\Box X - \Ric(X)} \\
		& \qquad - \sum_{\b, \gamma = 0}^n g^{\b \gamma}\left[2 \bR(e_\b, X) \on_{e_{\gamma}} + (\on_{e_\b}\bR)(e_\gamma, X) + \bR(e_\b, \n_{e_\gamma}X)\right],
\end{align*}
where $\bR$ is the curvature tensor associated to $\on$, considered as a homomorphism field from $TM \otimes TM \otimes F$ to $F$.
Here,
\[
	g(\L_X g, \on^2) := \sum_{i,j,k,l = 0}^ng^{ij}g^{kl}\L_Xg(e_i, e_k)\on^2_{e_j, e_l},
\]
with respect to some local frame.
\end{lemma}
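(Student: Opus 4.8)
The identity is an equality of differential operators applied to $u$, and both sides are tensorial in $X$ and in their coefficients; hence it suffices to verify it at an arbitrary point $p$, where I would work in a local frame $(e_0, \hdots, e_n)$ that is geodesic at $p$, so that $\n_{e_\a}e_\b|_p = 0$ and in particular $\n_X e_\b|_p = 0$. Writing $\Box = -g^{\a\b}\n^2_{e_\a, e_\b}$ in terms of the tensorial second covariant derivative $\n^2_{Y,Z} := \n_Y\n_Z - \n_{\n_Y Z}$, the whole computation reduces to the Leibniz rule for $\n^2$ together with the Ricci identities $\n^2_{Y,Z}u - \n^2_{Z,Y}u = R(Y,Z)u$ and the analogous commutation rule for the Riemann curvature $R^M$ of $(M,g)$ acting on the cotangent slots of $\n u$ and $\n^2u$.

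First I would expand $\Box(\n_X u)$. Using $\n_{e_\gamma}(\n_X u) = \n^2_{e_\gamma, X}u + \n_{\n_{e_\gamma}X}u$ and applying $\n_{e_\b}$ once more, then contracting with $-g^{\b\gamma}$, I obtain four groups of terms: the third covariant derivative $-g^{\b\gamma}\n^3_{e_\b, e_\gamma, X}u$ (with $\n^3 := \n\n^2$); two \emph{mixed} terms $-g^{\b\gamma}(\n^2_{e_\gamma, \n_{e_\b}X}u + \n^2_{e_\b, \n_{e_\gamma}X}u)$ carrying one derivative of $X$; and a term $-g^{\b\gamma}\n_{\n_{e_\b}\n_{e_\gamma}X}u$ carrying two derivatives of $X$, which at $p$ equals $\n_{\Box X}u$. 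Expanding $\n_X(\Box u)$ in the same way produces only $-g^{\b\gamma}\n^3_{X, e_\b, e_\gamma}u$.

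The mixed terms are treated by a single application of the Ricci identity: writing $(\L_X g)(e_i, e_k) = g(\n_{e_i}X, e_k) + g(\n_{e_k}X, e_i)$ and raising indices, one checks that $g(\L_X g, \n^2)u = g^{\b\gamma}(\n^2_{e_\gamma, \n_{e_\b}X}u + \n^2_{\n_{e_\b}X, e_\gamma}u)$, so that after commuting the slots in $\n^2_{e_\b, \n_{e_\gamma}X}u$ the two mixed terms combine precisely into $-g(\L_X g, \n^2)u - g^{\b\gamma}R(e_\b, \n_{e_\gamma}X)u$. This already yields the principal (second-order) part of the commutator and the third bundle-curvature term of the claimed formula.

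The heart of the argument, and the step I expect to be the main obstacle, is the third-derivative commutator $-g^{\b\gamma}(\n^3_{e_\b, e_\gamma, X} - \n^3_{X, e_\b, e_\gamma})u$, where $X$ sits in the innermost slot on the one hand and the outermost slot on the other. Viewing $\n^3u = \n\n^2u$ as a section of $T^*M^{\otimes 3}\otimes F$, I would move $X$ through the slots by two successive commutations: the first, swapping the outer derivative direction with $X$, uses the Ricci identity and contributes a bundle-curvature term $R(X, e_\b)\n_{e_\gamma}u$ together with a base-curvature term $\n_{R^M(X, e_\b)e_\gamma}u$; the second, swapping the two inner slots of $\n^2u$, uses the antisymmetry of $\n^2$ and the Leibniz rule for $\n_{e_\b}$ acting on $R(X, e_\gamma)u$, contributing $(\n_{e_\b}R)(X, e_\gamma)u$ and $R(X, e_\gamma)\n_{e_\b}u$. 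Contracting the result with $g^{\b\gamma}$ and using the symmetry of $g^{\b\gamma}$ together with the antisymmetry of $R$: the two bundle terms $R(X, e_\b)\n_{e_\gamma}u$ and $R(X, e_\gamma)\n_{e_\b}u$ coincide and give the factor $2$ in $-2g^{\b\gamma}R(e_\b, X)\n_{e_\gamma}u$; the $(\n R)$ term gives $-g^{\b\gamma}(\n_{e_\b}R)(e_\gamma, X)u$; and the base-curvature term, after contraction, traces the Riemann tensor into the Ricci tensor, yielding $-\n_{\Ric(X)}u$. Combining this with $\n_{\Box X}u$ and the contributions of the mixed terms gives the stated identity. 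The only delicate bookkeeping is in this last step: keeping the bundle curvature $R$ and the base curvature $R^M$ separate, obtaining the coefficient $2$, and tracing $R^M$ correctly (with the right sign convention) into $\Ric(X)$.
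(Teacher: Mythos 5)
Your proposal is correct: expanding $\Box\n_Xu$ and $\n_X\Box u$ at a point in a geodesic frame, identifying the mixed terms with $-g(\L_Xg,\n^2)u - g^{\b\gamma}R(e_\b,\n_{e_\gamma}X)u$ via the Ricci identity, and performing the two slot-commutations on $\n^3u$ (yielding the factor $2$, the $\n R$ term, and the trace of $R^M$ into $-\n_{\Ric(X)}u$) all check out and assemble exactly into the stated formula. The paper gives no argument at all here (its proof reads \enquote{This is a routine computation}), so your outline is precisely the routine computation the paper omits, carried out correctly.
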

\begin{proof}
This is a routine computation.
\end{proof}

Combining Corollary \ref{cor: Lie derivative} and Lemma \ref{le: comm X}, we may now compute the commutator \eqref{eq: commutator grad}.
For this, the following definition is convenient:
\begin{definition} \label{def: H-operator}
We say that $Q_m$ is an \emph{$N$-differential operator} of order $m$ if we may locally express $Q_m$ as a sum of 
\[
	A \circ \on_{X_1} \hdots \on_{X_k},
\]
for $k \leq m$, where $X_1, \hdots, X_k$ are vector fields satisfying
\[
	X_j|_{t = 0} \in TN,
\]
and $A$ is a \textbf{smooth} endomorphism of $F$. 
\end{definition}

\noindent
For example, $\on_{\grad(t)}$ is an $N$-differential operator of first order, whereas $\on_t$ is not.

\begin{lemma} \label{le: hard commutator}
We have
\begin{align}
	\left[\bBox, \frac1t\on_{\grad(t)}\right] 
		&= \left( \frac1t \on_{\grad(t)}\right)^2 + \left( \frac1t \on_W \right)^2 \nonumber \\
		&\qquad + \frac \f t \on_{\grad(t)} \on_{\grad(t)} + \frac \f t \on_W \on_W + \frac \f t \on_W\on_{\grad(t)} \nonumber \\
		&\qquad + \frac1t Q_1 + Q_2, \label{eq: commutator explicit}
\end{align}
where $Q_1$ and $Q_2$ are $N$-differential operators of first and second order, respectively.
\end{lemma}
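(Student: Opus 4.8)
The plan is to apply Lemma~\ref{le: comm X} directly with $X = \frac{\grad(t)}t$, and then to identify and absorb all the lower-order contributions into the $N$-differential operators $Q_1$ and $Q_2$. The main inputs are Corollary~\ref{cor: Lie derivative}, which computes $\L_{\frac{\grad(t)}t}g$ up to order $t$, and Corollary~\ref{cor: wave on grad(t) over t}, which controls $t\,\Box\left(\frac{\grad(t)}t\right)$. First I would substitute into the formula of Lemma~\ref{le: comm X}, so that
\[
	\left[\Box, \frac1t\n_{\grad(t)}\right]
		= - g\Big(\L_{\frac{\grad(t)}t}g, \n^2\Big) + \n_{\Box\left(\frac{\grad(t)}t\right) - \Ric\left(\frac{\grad(t)}t\right)} + (\text{curvature terms}).
\]
The plan is to argue that the second term and all the curvature terms contribute only to $\frac1t Q_1$ or $Q_2$.

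The main term to analyse is $-g\big(\L_{\frac{\grad(t)}t}g, \n^2\big)$. By Corollary~\ref{cor: Lie derivative}, with respect to the splitting \eqref{eq: splitting cor} we have $\L_{\frac{\grad(t)}t}g = \mathrm{diag}(-1,-1,\f) + Bt$, so I would compute the contraction $\sum g^{ij}g^{kl}\,\L_{\frac{\grad(t)}t}g(e_i,e_k)\,\n^2_{e_j,e_l}$ using the inverse metric from Proposition~\ref{prop: metric in time function}. Here the crucial observation is that the singular parts of $g^{-1}$ pair $\grad(t)$ with $\grad(t)$ (coefficient $\frac1t$) and $V$ with $V$ (coefficient $-\frac1t$), so combining the $-1$ entries of $\L g$ with the $\frac1t$ and $-\frac1t$ entries of $g^{-1}$ produces exactly the leading terms $\left(\frac1t\n_{\grad(t)}\right)^2 + \left(\frac1t\n_V\right)^2$ together with the $\frac\f t\n_{\grad(t)}\n_{\grad(t)}$, $\frac\f t\n_V\n_V$ and $\frac\f t\n_V\n_{\grad(t)}$ terms displayed in \eqref{eq: commutator explicit}. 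The $E$-block of $\L g$ is bounded ($\f$, not $\frac1t$) and contracts against the bounded $\bar g^{-1}$, hence lands in $Q_2$; the off-diagonal $Bt$ corrections and the $\f t$ entries of $g^{-1}$ reduce the order of singularity by one and so contribute to $\frac1t Q_1$ or $Q_2$, after noting that $\n^2_{e_j,e_l} = \n_{e_j}\n_{e_l} - \n_{\n_{e_j}e_l}$ and that the second-order covariant derivatives appearing can be rewritten in terms of $\n_{\grad(t)}$, $\n_V$ and $\bar\n$ using the frame.

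The remaining terms require showing they are subordinate. For $\n_{\Box\left(\frac{\grad(t)}t\right)}$, I would use Corollary~\ref{cor: wave on grad(t) over t}: since $t\,\Box\left(\frac{\grad(t)}t\right)$ is smooth and tangent to $N$ at $t=0$, we may write $\Box\left(\frac{\grad(t)}t\right) = \frac1t X$ with $X|_{t=0}\in TN$, so $\n_{\Box\left(\frac{\grad(t)}t\right)} = \frac1t\n_X$ is precisely of the form $\frac1t Q_1$. The curvature term $-\Ric\left(\frac{\grad(t)}t\right)$ has the shape $\frac1t \times (\text{smooth endomorphism composed with }\n)$ but is of order zero, so it too fits into $\frac1t Q_1$. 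Finally the three genuine curvature terms from Lemma~\ref{le: comm X}, evaluated at $X = \frac{\grad(t)}t$, each carry one power of $\frac1t$ and are either first order (those involving one $\n_{e_\gamma}$ with a $\grad(t)$- or $V$-tangent direction after using \eqref{eq: grad(t)}) or zeroth order; using $\grad(t) = (t+\f t^2)\d_t + (1+\f t)V + tZ$ to rewrite $\frac1t\grad(t)$ as $\frac1t V$ plus bounded terms keeps all resulting vector fields tangent to $N$ at $t=0$, so they land in $\frac1t Q_1$ and $Q_2$ as well.

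I expect the bookkeeping of the main contraction $-g\big(\L_{\frac{\grad(t)}t}g,\n^2\big)$ to be the principal obstacle, precisely because one must keep careful track of which pairings of the singular $\frac1t$-entries of $g^{-1}$ with the $O(1)$ and $O(t)$ entries of $\L g$ survive at order $\frac1t$ in the \emph{second}-order part and which drop to $Q_1$ or $Q_2$. The subtlety flagged after Proposition~\ref{prop: Hessian} — that $B(\grad(t),\grad(t))$ is smooth but individual entries like $B(\d_t,\d_t)$ need not be — means I would consistently work in the $\grad(t)$-adapted frame \eqref{eq: splitting cor} rather than the $\d_t$-frame, only converting via \eqref{eq: grad(t)} at the very end, so that no spurious extra singular term is introduced.
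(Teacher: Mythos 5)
Your overall route is the same as the paper's: apply Lemma \ref{le: comm X} with $X = \frac{\grad(t)}{t}$, extract the displayed second-order terms of \eqref{eq: commutator explicit} from the contraction $-g\big(\L_{\frac{\grad(t)}t}g, \n^2\big)$ via Corollary \ref{cor: Lie derivative} and Proposition \ref{prop: metric in time function}, and absorb the remaining terms using Corollary \ref{cor: wave on grad(t) over t} and \eqref{eq: grad(t)}. However, your treatment of the Ricci term has a genuine gap. The term produced by Lemma \ref{le: comm X} is $-\frac1t \n_{\Ric(\grad(t))}$, a \emph{first-order} operator (a covariant derivative in the direction of the vector field $\Ric(\grad(t))$), not an operator \enquote{of order zero}, and it is of the form $\frac1t Q_1$ only if the direction $\Ric(\grad(t))$ is tangent to $N$ at $t=0$. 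That tangency is equivalent to $g(\Ric(\grad(t)), V)|_{t = 0} = \Ric(V,V)|_{t=0} = 0$, which is exactly where the curvature hypothesis of Assumption \ref{ass: N compact} enters the lemma; the paper invokes it explicitly at this point. Without it, $\Ric(\grad(t))$ has a $\d_t$-component equal to $\Ric(V,V)$ at $t = 0$, and rewriting $\frac1t\n_{\d_t}$ via \eqref{eq: grad(t)} produces terms such as $\frac1{t^2}\n_{\grad(t)}$ and $\frac1{t^2}\n_V$ with coefficients not vanishing at $t=0$, which are not admissible in \eqref{eq: commutator explicit}. Your stated justification does not address this, so as written that step fails.

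A second omission: the contraction of Corollary \ref{cor: Lie derivative} with $g^{-1}$ yields the \emph{tensorial} second derivatives $\frac1{t^2}\n^2_{\grad(t),\grad(t)} + \frac1{t^2}\n^2_{V,V}$, and passing to the compositions $\left(\frac1t\n_{\grad(t)}\right)^2 + \left(\frac1t\n_V\right)^2$ requires controlling $-\frac1{t^2}\n_{\n_{\grad(t)}\grad(t) + \n_VV}$. This rests on Corollary \ref{cor: VV grad grad}, which gives $\n_{\grad(t)}\grad(t) + \n_V V = \grad(t) + \f t^2 \d_t + \f t V + tZ$: the resulting $-\frac1{t^2}\n_{\grad(t)}$ exactly cancels the term generated when converting $\frac1{t^2}\n_{\grad(t)}\n_{\grad(t)}$ into $\left(\frac1t\n_{\grad(t)}\right)^2$ (using $g(\grad(t),\grad(t)) = t + \f t^2$, while $\d_V t = 0$). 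Your phrase \enquote{rewritten in terms of $\n_{\grad(t)}$, $\n_V$ and $\bar\n$ using the frame} glosses over this cancellation; without Corollary \ref{cor: VV grad grad} an inadmissible $\frac1{t^2}\n_{\grad(t)}$ survives. Finally, a minor bookkeeping inconsistency: the $\frac\f t$ second-order terms in \eqref{eq: commutator explicit} arise both from the $Bt$ correction of Corollary \ref{cor: Lie derivative} contracted with the singular block of $g^{-1}$ and from the bounded corrections of $g^{-1}$ contracted with the $-1$ entries, so these contributions are \emph{not} absorbed into $\frac1t Q_1 + Q_2$ as your second paragraph suggests; this is harmless only because those terms are displayed explicitly in the lemma.
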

\begin{proof}
Applying Lemma \ref{le: comm X} for $t \neq 0$ with 
\[
	X := \frac{\grad(t)}t,
\]
we get
\begin{align}
	\left[\bBox, \frac1t \on_{\grad(t)}\right] 
	&= - g\left(\L_{\frac{\grad(t)}t} g, \on^2\right) + \frac1t \on_{t \Box \left(\frac{\grad(t)}t \right) - \Ric(\grad(t))} \nonumber \\*
	&\qquad - \frac2t \sum_{\b, \gamma = 0}^n g^{\b \gamma} \bR \left(e_\b, \grad(t)\right) \on_{e_{\gamma}} \nonumber \\*
	&\qquad - \frac1t \sum_{\b, \gamma = 0}^n g^{\b \gamma}(\on_{e_\b}\bR)\left(e_\gamma, \grad(t)\right) \nonumber \\*
	&\qquad - \sum_{\b, \gamma = 0}^ng^{\b \gamma} \bR\left(e_\b, \n_{e_\gamma}\left(\frac{\grad(t)}t\right)\right). \label{eq: singular commutator}
\end{align}

By Corollary \ref{cor: wave on grad(t) over t} and since 
\[
	g(\Ric(\grad(t)), V)|_{t = 0} = \Ric(V, V)|_{t = 0} = 0,
\]
we may write
\begin{align*}
	t\Box\left( \frac{\grad(t)}t \right) - \Ric(\grad(t))
		&= \f t \d_t + \f W + Z \\
		&= \f \grad(t) + \f W + Z
\end{align*}
which implies that the second term of equation \eqref{eq: singular commutator} is of the claimed form.
For the third term of equation \eqref{eq: singular commutator}, choose a local frame $e_0 := \grad(t), e_1 := W, e_2, \hdots, e_n$, with $e_2, \hdots, e_n \in E$.
Using that $\bR(\grad(t), \grad(t)) = 0$, we have
\begin{align*}
	- \frac2t \sum_{\b, \gamma = 0}^n 
		&g^{\b \gamma} \bR(e_\b, \grad(t)) \on_{e_{\gamma}} 
		= - \frac 2t \sum_{\b, \gamma = 1}^n g^{\b \gamma} \bR(e_\b, \grad(t)) \on_{e_{\gamma}},
\end{align*}
which is of the claimed form, since $e_1, \hdots, e_n$ are all tangent to the hypersurfaces $\{t\} \times N$.
The fourth term in \eqref{eq: singular commutator} already is of the claimed form.
Finally, the fifth term of \eqref{eq: singular commutator} is computed as
\begin{align*}
	\sum_{\b, \gamma = 0}^ng^{\b \gamma} \bR\left(e_\b, \n_{e_\gamma}\left(\frac{\grad(t)}t\right)\right) 
		&= - \frac1{t^2} \bR\left(\grad(t), \grad(t)\right) \\*
		& \qquad + \frac1t \sum_{\b, \gamma = 0}^ng^{\b \gamma} \bR\left(e_\b, \n_{e_\gamma}\grad(t)\right) \\
		&= \frac1t \sum_{\b, \gamma = 0}^ng^{\b \gamma} \bR\left(e_\b, \n_{e_\gamma}\grad(t)\right),
\end{align*}
which is of the form claimed in the lemma. 

We now turn to the first term on the right hand side in equation \eqref{eq: singular commutator}. 
By Proposition \ref{prop: metric in time function} and Corollary \ref{cor: Lie derivative} one concludes that with respect to the splitting 
\begin{equation*}
	T((-\e, \e) \times N)|_{t \neq 0} = \R\grad(t) \oplus \R W \oplus E,
\end{equation*}
we have
\[
	\sum_{k,l = 0}g^{ik}g^{jl}\L_{\frac{\grad(t)}t}g(e_k, e_l) = 
	\begin{pmatrix}
		- \frac1{t^2} + \frac{\f}t & \frac{\f}t & \f \\
		\frac{\f}t & -\frac1{t^2} + \frac{\f}t & \f \\
		\f & \f & \f
	\end{pmatrix}.
\]
It follows that
\begin{align}
	-g\left(\L_{\frac{\grad(t)}t}g, \on^2\right)
		&= - \sum_{k,l = 0}g^{ik}g^{jl}\L_{\frac{\grad(t)}t}g(e_k, e_l)\on^2_{e_k, e_l}\\
		&= \frac1{t^2} \on^2_{\grad(t), \grad(t)} + \frac1{t^2}\on^2_{W, W} \nonumber \\*
		&\qquad + \frac \f t \on^2_{\grad(t), \grad(t)} + \frac \f t \on^2_{W,W} + \frac \f t \on^2_{W, \grad(t)} \nonumber \\*
		&\qquad + \frac \f t \bR(W, \grad(t)) + Q_2, \label{eq: leading term commutator}
\end{align}
where $Q_2$ is an $N$-differential operator of second order.
Let us simplify this expression.
By Corollary \ref{cor: VV grad grad} and equation \eqref{eq: grad(t)}, we first note that
\begin{align*}
	\frac1{t^2} \on^2_{\grad(t), \grad(t)} + \frac1{t^2}\on^2_{W, W} 
		&= \frac1{t^2} \on_{\grad(t)}\on_{\grad(t)} - \frac1{t^2}\on_{\n_{\grad(t)}\grad(t) + \n_W W} \\*
		&\qquad + \left( \frac1t \on_W \right)^2 \\
		&= \frac1{t^2} \on_{\grad(t)}\on_{\grad(t)} - \frac1{t^2}\on_{\grad(t)} + \left( \frac1t \on_W \right)^2\\*
		&\qquad + \frac1t (\f \on_\grad(t) + \f \on_W + \on_Z) \\
		&= \left(\frac1t \on_{\grad(t)}\right)^2 + \left( \frac1t \on_W \right)^2 \\*
		&\qquad + \frac1t (\f \on_\grad(t) + \f \on_W + \on_Z),
\end{align*}
which is of the claimed form.
The last thing to note is that $\n_{\grad(t)}\grad(t)$, $\n_W W$ and $\n_W \grad(t)$ are all tangent to $N$ at $t = 0$.
Inserting these observations into equation \eqref{eq: leading term commutator} finishes the proof of the lemma.
\end{proof}

We may now prove the lower bound for the commutator:

\begin{lemma}[Improved estimate for the commutator] \label{le: commutator}
There are constants $\a_0, \e_0, C > 0$ such that 
\begin{align*}
	\ldr{[\bBox_\a^s, \bBox_\a^a]u, u}_{L^2}
		& \geq \frac{3\a}2 \norm{u}_{H^1_\a}^2 - \a C \norm{\en u}_{L^2}^2
\end{align*}
for all $u \in C_*^\infty((-\e, \e) \times N, F)$, for any $\e \in (0, \e_0)$ and any integer $\a \geq \a_0$.
\end{lemma}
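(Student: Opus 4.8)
The plan is to start from the third inequality in Lemma~\ref{le: first estimates}, which already isolates the two harmless contributions $+2\a\norm{\frac{\a u}t}_{L^2}^2$ and $-\e\a C\norm{u}_{H^1_\a}^2$ and reduces the lemma to a lower bound for $-2\a\ldr{[\Box,\frac1t\n_{\grad(t)}]u,u}_{L^2}$. I would then substitute the explicit form of the commutator from Lemma~\ref{le: hard commutator}. The guiding philosophy is that the two terms $(\frac1t\n_{\grad(t)})^2$ and $(\frac1t\n_V)^2$ are the \emph{leading} (most singular) part and will produce, after integration by parts, exactly the good quadratic quantities appearing in $\norm{\cdot}_{H^1_\a}$, while every other term in \eqref{eq: commutator explicit} is a lower-order error to be absorbed.

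For the leading part I would use the formal-adjoint identities of Corollary~\ref{cor: formal adjoint grad t} and Corollary~\ref{cor: formal adjoint V}. Pairing $(\frac1t\n_{\grad(t)})^2u$ with $u$ and moving one factor across gives $-\norm{\frac1t\n_{\grad(t)}u}_{L^2}^2 + \ldr{\frac1t\n_{\grad(t)}u,\f u}_{L^2}$, and similarly for $V$. The cross terms are controlled using $\norm{u}_{L^2}\leq\e\norm{\frac ut}_{L^2}\leq\frac\e\a\norm{u}_{H^1_\a}$, so that each is $O(\frac\e\a)\norm{u}_{H^1_\a}^2$ and, after multiplication by $-2\a$, only $O(\e)\norm{u}_{H^1_\a}^2$. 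Hence the leading part contributes $+2\a\big(\norm{\frac1t\n_{\grad(t)}u}_{L^2}^2+\norm{\frac1t\n_V u}_{L^2}^2\big)$; adding the pre-existing $+2\a\norm{\frac{\a u}t}_{L^2}^2$ yields $2\a\norm{u}_{H^1_\a}^2-2\a\norm{\bar\n u}_{L^2}^2$, i.e.\ the full target norm with coefficient $2\a$ up to a defect supported on the $\bar\n$-direction.

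It remains to bound the error terms in \eqref{eq: commutator explicit}: the three half-singular pieces $\frac\f t\n_{\grad(t)}\n_{\grad(t)}$, $\frac\f t\n_V\n_V$, $\frac\f t\n_V\n_{\grad(t)}$, together with $\frac1t Q_1$ and $Q_2$. Each is paired with $u$ and integrated by parts once; the two mechanisms are that (i) an undifferentiated factor of $t$ turns a singular $H^1_\a$-quantity into a small one, since $\norm{\n_{\grad(t)}u}_{L^2}=\norm{t\cdot\frac1t\n_{\grad(t)}u}_{L^2}\leq\e\norm{\frac1t\n_{\grad(t)}u}_{L^2}$, and similarly for $V$; and (ii) the $N$-differential operators $Q_1,Q_2$ differentiate only in directions tangent to $N$, so no transversal $\n_t$ ever appears and every derivative is one of $\frac1t\n_{\grad(t)}$, $\frac1t\n_V$ or $\bar\n$. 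Tracking this, each error is bounded by $\e\a C\norm{u}_{H^1_\a}^2+\a C\norm{\bar\n u}_{L^2}^2$ after multiplication by $-2\a$. The main obstacle is the genuinely second-order tangential operator $Q_2$: its purely tangential $\bar\n\bar\n$-part cannot be improved by any power of $t$ and therefore produces an irreducible $O(\a)\norm{\bar\n u}_{L^2}^2$ contribution. This is precisely why the lemma tolerates the defect $-\a C\norm{\bar\n u}_{L^2}^2$, and one must also be careful that the integrations by parts do not generate extra powers of $\a$, so that all errors remain uniform in $\a$. Finally, choosing $\e_0$ small enough that the $\e\a C\norm{u}_{H^1_\a}^2$ errors are at most $\frac\a2\norm{u}_{H^1_\a}^2$, and enlarging $C$ to swallow the $\norm{\bar\n u}_{L^2}^2$ defects, leaves $\ldr{[\Box_\a^s,\Box_\a^a]u,u}_{L^2}\geq\frac{3\a}2\norm{u}_{H^1_\a}^2-\a C\norm{\bar\n u}_{L^2}^2$ for all $\a\geq\a_0$.
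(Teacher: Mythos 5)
Your proposal is correct and follows essentially the same route as the paper: starting from the third estimate of Lemma~\ref{le: first estimates}, inserting the explicit commutator of Lemma~\ref{le: hard commutator}, integrating the leading squares by parts via Corollaries~\ref{cor: formal adjoint grad t} and~\ref{cor: formal adjoint V} to produce $2\a\big(\norm{\frac1t\n_{\grad(t)}u}_{L^2}^2+\norm{\frac1t\n_V u}_{L^2}^2\big)$, and absorbing the remaining terms using exactly the two mechanisms the paper uses (factors of $t$ rendering singular derivatives $\e$-small, and the tangential $\bar\n$-part yielding the irreducible $-\a C\norm{\bar\n u}_{L^2}^2$ defect). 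The only cosmetic caveat is that in your mechanism (ii) the derivatives along $N$-tangent directions appear as $t\cdot\frac1t\n_{\grad(t)}$, $t\cdot\frac1t\n_V$ or $\bar\n$ (the factor of $t$ being essential, as your mechanism (i) already supplies), so the two mechanisms must be combined just as you in fact do.
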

\begin{proof}
By Lemma \ref{le: first estimates}, we know that the crucial term to estimate is
\[
	- 2 \a \ldr{\left[\bBox, \frac1t \on_{\grad(t)}\right]u, u}_{L^2}.
\]
Let $C > 0$ denote some constant which may change from term to term.
By Lemma \ref{le: hard commutator} combined with Corollary \ref{cor: formal adjoint V} and Corollary \ref{cor: formal adjoint grad t}, we get the estimate
\begin{align*}
	- 2 \a \ldr{\left[\bBox, \frac1t \on_{\grad(t)}\right]u, u}_{L^2}
		&\geq 2\a \norm{\frac1t \on_{\grad(t)}u}_{L^2}^2 + 2\a \norm{\frac1t \on_W u}^2_{L^2} - \e\a C\norm{u}_{H^1_\a}^2 \\*
		&\qquad - 2\a \ldr{\frac1t Q_1 u, u}_{L^2} - 2\a \ldr{Q_2 u, u}_{L^2}
\end{align*}
for any $\a \geq \a_0$ and $\e \in (0, \e)$ for $\a_0$ large enough and $\e_0$ small enough.
By Definition \ref{def: H-operator}, we know that there are smooth endomorphisms $A_0, A_1, A_2$ and $B$, such that
\[
	\frac1t Q_1 = \frac1t A_0 \circ \on_{\grad(t)} + \frac1t A_1 \circ \on_W + \frac1t A_2 \circ \on_Z + \frac1t B.
\]
We get the estimate
\begin{align*}
	-2 \a \ldr{\frac1t Q_1 u, u}_{L^2} 
		&\geq -2\a \ldr{A_2 \circ \on_Z u, \frac u t }_{L^2} - \e\a C\norm{u}_{H^1_\a}^2\\
		&\geq - C \a \norm{\en u}_{L^2}^2 - \a\left(\e +\frac{1}{\a^2}\right) C\norm{u}_{H^1_\a}^2.
\end{align*}
Similarly, we get the analogous lower bound
\[
	-2 \a \ldr{Q_2 u, u}_{L^2} \geq - C \a \norm{\en u}_{L^2}^2 - \e\a C\norm{u}_{H^1_\a}^2.
\]
We conclude that
\begin{align*}
	- 2 \a \ldr{\left[\bBox, \frac1t \on_{\grad(t)}\right]u, u}_{L^2}
		&\geq 2\a \norm{\frac1t \on_{\grad(t)}u}_{L^2}^2 + 2\a \norm{\frac1t \on_W u}^2_{L^2} \\
		&\qquad - C \a \norm{\en u}_{L^2}^2 - \a\left(\e +\frac{1}{\a^2}\right) C\norm{u}_{H^1_\a}^2.
\end{align*}
Inserting this into Lemma \ref{le: first estimates} implies
\begin{align*}
	\ldr{[\bBox_\a^s, \bBox_\a^a] u, u}_{L^2}
		&\geq 2\a \norm{\frac1t \on_{\grad(t)}u}_{L^2}^2 + 2\a \norm{\frac1t \on_W u}^2_{L^2} + 2\a \norm{\frac{\a u}t}_{L^2}^2 \\*
		&\qquad - C \a \norm{\en u}_{L^2}^2 - \a\left(\e +\frac{1}{\a^2}\right) C\norm{u}_{H^1_\a}^2 \\
		&\geq 2\a \norm{u}_{H^1_\a}^2 - C \a \norm{\en u}_{L^2}^2 - \a\left(\e +\frac{1}{\a^2}\right) C\norm{u}_{H^1_\a}^2.
\end{align*}
Choosing $\e_0$ small enough and $\a_0$ large enough yields the claim.
\end{proof}

From Lemma \ref{le: commutator}, we see that it is necessary to compensate for the term $-\a C \norm{\en u}_{L^2}^2$.
The next lemma will provide the necessary lower bound on $\norm{\bBox_\a^su}_{L^2}$:

\begin{lemma}[Improved estimate for the self-adjoint part]\label{le: Garding}
There are constants ${\e_0, \a_0, C > 0}$, such that
\begin{align*}
	\norm{\bBox_\a^s u}_{L^2} \geq \a \frac{\norm{\en u}_{L^2}^2}{\norm{u}_{H^1_\a}} - \e \a C \norm{u}_{H^1_\a}
\end{align*}
for all $u \in C_*^\infty((-\e, \e) \times N, F)$, for any $\e \in (0, \e_0)$ and any integer $\a \geq \a_0$.
\end{lemma}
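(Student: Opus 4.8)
The plan is to prove the equivalent inequality
\[
	\norm{\Box_\a^s u}_{L^2} \norm{u}_{H^1_\a} \geq \a \norm{\bar \n u}_{L^2}^2 - \e \a C \norm{u}_{H^1_\a}^2
\]
by testing $\Box_\a^s u$ against the section $\a u$ and then invoking Cauchy--Schwarz. Recalling from the proof of Lemma \ref{le: first estimates} that $\Box_\a^s = \Box - \frac{\a^2}t + p_2(\a)$, I would begin from
\[
	\ldr{\Box_\a^s u, \a u}_{L^2} = \a \ldr{\Box u, u}_{L^2} - \a^3 \ldr{\frac1t u, u}_{L^2} + \a \ldr{p_2(\a) u, u}_{L^2},
\]
and show that the first term produces the sought positive quantity $\a \norm{\bar \n u}_{L^2}^2$, while the remaining two are absorbed into the $\e$-error.

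The core of the argument is a G\r{a}rding-type identity for the first term. Since $u \in C_*^\infty((-\e, \e) \times N, F)$ is compactly supported and vanishes to infinite order at $N$, integration by parts produces no boundary terms and $\ldr{\Box u, u}_{L^2} = \ldr{\n u, \n u}_{L^2} = \int g^{\a\b} a(\n_{e_\a} u, \n_{e_\b} u) \, d\mu_g$. Expanding this in the frame $\grad(t), V, e_2, \hdots, e_n$ and inserting the components of $g^{-1}$ from Proposition \ref{prop: metric in time function}, the only contribution with a bounded, sign-definite coefficient is the $E$-block, which yields exactly $\norm{\bar \n u}_{L^2}^2$. The indefinite terms $\pm \frac1t \abs{\n_{\grad(t)} u}^2$ and $\mp \frac1t \abs{\n_V u}^2$, together with the $\f t$ cross term between $V$ and $E$ and all $\f t^2$ corrections, are bounded by $\e C \norm{u}_{H^1_\a}^2$: on the support one has $\frac1{\abs t} \leq \frac{\e}{t^2}$, so each single inverse power of $t$ is dominated by the weights $\norm{\frac1t \n_{\grad(t)} u}_{L^2}$, $\norm{\frac1t \n_V u}_{L^2}$ already present in $\norm{u}_{H^1_\a}$. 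This gives $\ldr{\Box u, u}_{L^2} = \norm{\bar \n u}_{L^2}^2 + O(\e \norm{u}_{H^1_\a}^2)$.

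For the lower-order terms I would estimate $\a^3 \abs{\ldr{\frac1t u, u}_{L^2}} \leq \a^3 \e \int \frac{\abs u^2}{t^2} d\mu_g = \a \e \norm{\frac{\a u}t}_{L^2}^2 \leq \a \e \norm{u}_{H^1_\a}^2$, and, using $\abs{p_2(\a)} \leq C\a^2$ on the compact closure together with $1 \leq \frac{\e^2}{t^2}$, similarly $\a \abs{\ldr{p_2(\a) u, u}_{L^2}} \leq \a \e C \norm{u}_{H^1_\a}^2$. Combining the three estimates gives $\ldr{\Box_\a^s u, \a u}_{L^2} \geq \a \norm{\bar \n u}_{L^2}^2 - \a \e C \norm{u}_{H^1_\a}^2$. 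Finally, since $\norm{\a u}_{L^2}^2 = \a^2 \int t^2 \frac{\abs u^2}{t^2} d\mu_g \leq \e^2 \norm{\frac{\a u}t}_{L^2}^2 \leq \e^2 \norm{u}_{H^1_\a}^2$, we have $\norm{\a u}_{L^2} \leq \norm{u}_{H^1_\a}$ for $\e \leq 1$, and therefore
\[
	\norm{\Box_\a^s u}_{L^2} \norm{u}_{H^1_\a} \geq \norm{\Box_\a^s u}_{L^2} \norm{\a u}_{L^2} \geq \ldr{\Box_\a^s u, \a u}_{L^2} \geq \a \norm{\bar \n u}_{L^2}^2 - \a \e C \norm{u}_{H^1_\a}^2,
\]
which is the claim after dividing by $\norm{u}_{H^1_\a}$ (the case $u = 0$ being trivial).

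I expect the main obstacle to be the G\r{a}rding identity of the second paragraph, i.e.\ extracting the definite spatial quantity $\norm{\bar \n u}_{L^2}^2$ out of the genuinely indefinite operator $\Box$. The mechanism that makes this possible is precisely the \emph{singular} weight: the bound $\frac1{\abs t} \leq \frac{\e}{t^2}$ is what converts every surviving single inverse power of $t$ into a term controlled by the $\frac1{t^2}$-weights carried by $\norm{\cdot}_{H^1_\a}$, so that the non-elliptic directions contribute only $\e$-small errors. The observation $\norm{\a u}_{L^2} \leq \norm{u}_{H^1_\a}$, which lets the ratio form in the statement emerge directly from Cauchy--Schwarz without losing the factor $\e$ in the error, is the only other point requiring care.
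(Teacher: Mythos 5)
Your proposal is correct and follows essentially the same route as the paper's own proof: both reduce to the decomposition $\Box_\a^s = \Box - \frac{\a^2}t + p_2(\a)$ from Lemma \ref{le: first estimates}, pair against (a multiple of) $u$ via Cauchy--Schwarz, integrate by parts to the Dirichlet form $\ldr{\n u, \n u}_{L^2}$, and use the asymptotics of $g^{-1}$ from Proposition \ref{prop: metric in time function} together with $\frac1{\abs t} \leq \frac{\e}{t^2}$ to isolate $\norm{\bar \n u}_{L^2}^2$ and absorb all remaining terms into $\e C \norm{u}_{H^1_\a}^2$. The only differences are organizational (testing against $\a u$ and handling $p_2(\a)$ inline, rather than quoting the first inequality of Lemma \ref{le: first estimates} and dividing by $\a\norm{u}_{L^2}$), not mathematical.
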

\begin{proof}
By Lemma \ref{le: first estimates}, we have
\[
	\norm{\bBox_\a^s u}_{L^2} 
		\geq \norm{\left(\bBox - \frac{\a^2} t\right) u}_{L^2} - \e \a C \norm{u}_{H^1_\a}
\]
if $\a_0$ is large enough and $\e_0$ small enough.
By Proposition \ref{prop: metric in time function}, we get
\begin{align*}
	\frac 1\a & \norm{\left(\bBox - \frac{\a^2} t \right) u}_{L^2} \\
		&\geq \frac1{\a\norm{u}_{L^2}} \abs{\ldr{\left(\bBox - \frac{\a^2} t \right) u, u}_{L^2}} \\
		&\geq \frac1{\norm{u}_{H^1_\a}} \abs{\ldr{\left(\bBox - \frac{\a^2} t \right) u, u}_{L^2}} \\
		&\geq \frac 1{\norm{u}_{H^1_\a}} \left(\ldr{\on u, \on u}_{L^2} - \abs{\ldr{\a u , \frac{\a u}t }_{L^2}}\right) \\
		&\geq \frac 1{\norm{u}_{H^1_\a}} \Big( \ldr{\en u, \en u}_{L^2} - \abs{\ldr{ g^{00} \on_{\grad(t)}u,\on_{\grad(t)}u}_{L^2}} \\*
		& \qquad - \abs{\ldr{g^{11}\on_W u, \on_W u}_{L^2}} - 2\abs{\ldr{\sum_{j= 2}^ng^{1j}\on_{e_j}u, \on_W u}_{L^2}}  - \abs{\ldr{\a u , \frac{\a u}t }_{L^2}} \Big) \\
		&\geq \frac{\norm{\en u}_{L^2}^2}{\norm{u}_{H^1_\a}} - \e C \norm{u}_{H^1_\a}. \qedhere
\end{align*}
\end{proof}

We are finally ready to prove Theorem \ref{thm: Carleman estimate}.

\begin{proof}[Proof of Theorem \ref{thm: Carleman estimate}]
Equation \eqref{eq: Box alpha} says
\begin{align*}
	\frac1\a \norm{\bBox_\a u}^2_{L^2} 
		&= \frac1\a \norm{\bBox_\a^s u}^2_{L^2} + \frac1\a\norm{\bBox_\a^a u}^2_{L^2} + \frac1\a \ldr{[\bBox_\a^s, \bBox_\a^a] u, u}_{L^2}.
\end{align*}
By Lemma \ref{le: commutator} and Lemma \ref{le: Garding}, we can fix constants $\e_0, \a_0, C > 0$, such that
\begin{align*}
	\frac 1\a \ldr{[\bBox_\a^s, \bBox_\a^a]u, u}_{L^2}
		& \geq \frac32 \norm{u}_{H^1_\a}^2 - C \norm{\en u}_{L^2}^2, \\
	\frac1{\sqrt {\a}} \norm{\bBox_\a^s u}_{L^2} 
		&\geq \sqrt \a \left(\frac{\norm{\en u}_{L^2}^2}{\norm{u}_{H^1_\a}} - \e C \norm{u}_{H^1_\a}\right)
\end{align*}
for all $u \in C_*^\infty((-\e, \e) \times N, F)$, for any $\e \in (0,\e_0)$ and any $\a \geq \a_0$.
We claim that if we increase $\a_0$ and choose $\e$ small enough to satisfy
\[
	\sqrt \a_0 (1-2\e C^2) \geq \frac C {\sqrt 2},
\]
then estimate \eqref{eq: version of Carleman estimate} holds for any integer $\a \geq \a_0$.

\noindent \textbf{Case 1:} 
Assume that $\norm{u}_{H^1_\a}^2 \leq 2C \norm{\en u}_{L^2}^2$.
In this case, it follows that
\begin{align*}
	\frac1{\sqrt \a} \norm{\bBox_\a^s u}_{L^2} 
		&\geq \frac{\sqrt \a }{\norm{u}_{H^1_\a}} \left( \norm{\en u}^2_{L^2} - \e C \norm{u}_{H^1_\a}^2 \right) \\
		&\geq \frac{\sqrt \a }{\norm{u}_{H^1_\a}} \left( \norm{\en u}^2_{L^2} - 2 \e C^2 \norm{\en u}_{L^2}^2 \right) \\
		&= \frac{\norm{\en u}_{L^2}^2}{\norm{u}_{H^1_\a}} \sqrt \a (1 - 2 \e C^2) \\
		&\geq \frac{C}{\sqrt 2}\frac{\norm{\en u}_{L^2}^2}{\norm{u}_{H^1_\a}}.
\end{align*}
Equation \eqref{eq: Box alpha} implies in this case that
\begin{align*}
	\frac1\a \norm{\bBox_\a u}^2_{L^2} 
		&\geq \frac1\a \norm{\bBox_\a^s u}_{L^2}^2 + \frac1\a \ldr{[\bBox_\a^s, \bBox_\a^a] u, u}_{L^2} \\
		&\geq \frac{C^2}{2}\frac{\norm{\en u}_{L^2}^4}{\norm{u}_{H^1_\a}^2} + \frac32 \norm{u}_{H^1_\a}^2 - C \norm{\en u}_{L^2}^2 \\
		&= \norm{u}_{H^1_\a}^2 + \frac{1}{2 \norm{u}_{H^1_\a}^2}\left( C^2 \norm{\en u}_{L^2}^4 + \norm{u}_{H^1_\a}^4 - 2C\norm{\en u}_{L^2}^2 \norm{u}_{H^1_\a}^2 \right) \\
		&= \norm{u}_{H^1_\a}^2 + \frac{1}{2 \norm{u}_{H^1_\a}^2}\left( C \norm{\en u}_{L^2}^2 - \norm{u}_{H^1_\a}^2 \right)^2 \\
		&\geq \norm{u}_{H^1_\a}^2.
\end{align*}
\textbf{Case 2:}
Let us now assume the reverse inequality $\norm{u}_{H^1_\a}^2 \geq 2C \norm{\en u}_{L^2}^2$.
In this case, 
\begin{align*}
	\frac1\a \norm{\bBox_\a u}^2_{L^2} 
		& \geq \frac 1\a \ldr{[\bBox_\a^s, \bBox_\a^a]u, u}_{L^2} \\*
		& \geq \frac32 \norm{u}_{H^1_\a}^2 - C \norm{\en u}_{L^2}^2 \\
		& \geq \norm{u}_{H^1_\a}^2 + \frac12 \left(\norm{u}_{H^1_\a}^2 - 2 C \norm{\en u}_{L^2}^2 \right) \\
		& \geq \norm{u}_{H^1_\a}^2.
\end{align*}
This finishes the proof of the theorem.
\end{proof}

\subsection{Proof of unique continuation} \label{subsec: proof unique cont}

We now use Theorem \ref{thm: Carleman estimate} and Proposition \ref{prop: Carleman estimate transport} to prove Theorem \ref{thm: unique continuation}.
It is convenient to first prove the following lemma:

\begin{lemma} \label{le: trivial conseq from ass}
If $u_1 \in C_*^\infty((-\e, \e) \times N, F_1)$ and $u_2 \in C_*^\infty((-\e, \e) \times N, F_2)$ satisfy assumption \eqref{eq: pointwise bound assumption} in Theorem \ref{thm: unique continuation}, then there is a constant $C > 0$, such that 
\[
	\norm{t^{-\a}\bBox u_1}_{L^2} + \norm{t^{-\a}\on_t u_2}_{L^2} 
		\leq C \left( \norm{t^{-\a} u_1}_{H^1_\a} + \norm{t^{-\a-1}u_2}_{L^2} \right)
\]
for any $\a$. 
The constant $C$ is independent of $\a$.
\end{lemma}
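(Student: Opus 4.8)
The plan is to multiply the pointwise bound \eqref{eq: pointwise bound assumption} by $\abs t^{-\a}$, take $L^2$-norms, and recognise each resulting term as a bounded multiple of one of the four terms defining $\norm{t^{-\a}u_1}_{H^1_\a}$ or of $\norm{t^{-\a-1}u_2}_{L^2}$. Since the two summands $\abs{\Box u_1}$ and $\abs{\n_t u_2}$ on the left of \eqref{eq: pointwise bound assumption} are nonnegative, each of $\norm{t^{-\a}\Box u_1}_{L^2}$ and $\norm{t^{-\a}\n_t u_2}_{L^2}$ is bounded by the $L^2$-norm of $\abs t^{-\a}$ times the right-hand side of \eqref{eq: pointwise bound assumption}; applying the triangle inequality there leaves five terms to estimate, whose sum bounds both left-hand quantities.

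The key structural fact is that $V$ and every frame field $e_i \in E$ are tangent to the level sets $\{t\}\times N$, i.e.\ $dt(V) = dt(e_i) = 0$ by Proposition \ref{prop: metric in time function}. Hence $\n_V$ and $\n_{e_i}$ commute with multiplication by the scalar $t^{-\a}$, so that, writing $v := t^{-\a}u_1$,
\[
	t^{-\a}\n_V u_1 = \n_V v, \qquad t^{-\a}\bar\n u_1 = \bar\n v.
\]
Thus the term coming from $\frac C{\abs t}\abs{\n_V u_1}$ becomes $\norm{\tfrac1t\n_V v}_{L^2}\leq\norm v_{H^1_\a}$, the term from $C\abs{\bar\n u_1}$ becomes $\norm{\bar\n v}_{L^2}\leq\norm v_{H^1_\a}$, the term from $\frac C{\abs t}\abs{u_1}$ is $\norm{\tfrac vt}_{L^2}=\tfrac1\a\norm{\tfrac{\a v}t}_{L^2}\leq\norm v_{H^1_\a}$ for $\a\geq 1$, and the term from $\frac C{\abs t}\abs{u_2}$ is exactly $\norm{t^{-\a-1}u_2}_{L^2}$.

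The only genuinely nontrivial term is $C\norm{t^{-\a}\n_t u_1}_{L^2}$, and this is where I expect the main (though modest) work to lie: unlike $V$ and $E$, the vector field $\d_t$ is transversal to the level sets, so conjugating $\n_t$ by $t^{-\a}$ produces a singular factor $\a/t$. To handle it I would invert \eqref{eq: grad(t)}, writing
\[
	\n_t = \tfrac1{t(1+\f t)}\big(\n_{\grad(t)} - (1+\f t)\n_V - t\n_Z\big)
\]
for $t\neq 0$, valid after shrinking $\e$ so that $1+\f t$ is bounded away from zero. Here $\grad(t)$ satisfies $dt(\grad(t)) = g(\grad(t),\grad(t)) = t+\f t^2$, so $t^{-\a}\n_{\grad(t)}u_1 = \a(1+\f t)v + \n_{\grad(t)}v$; inserting this, the factor $(1+\f t)$ cancels the one in the denominator and the singular part reduces \emph{exactly} to $\tfrac{\a v}t$, while the remaining pieces $\tfrac1t\n_{\grad(t)}v$, $\tfrac1t\n_V v$ and $\n_Z v$ are each $\leq C\norm v_{H^1_\a}$. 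The point to verify carefully is precisely this cancellation: every $\a$-dependent coefficient that arises is matched by the $\a$-weight built into $\norm\cdot_{H^1_\a}$, so that the final constant $C$ is independent of $\a$. Collecting the five estimates and adding the two left-hand contributions then yields the claim.
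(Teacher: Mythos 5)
Your proof is correct and follows essentially the same route as the paper: multiply \eqref{eq: pointwise bound assumption} by the weight, use that $t^{-\a}$ commutes with $\n_V$ and $\bar \n$ because $V$ and $E$ are tangent to the level sets, and control the $\n_t u_1$ term by inverting \eqref{eq: grad(t)}, with every resulting factor $\a/t$ matched by the weight built into $\norm{\cdot}_{H^1_\a}$. The only (immaterial) difference is the order of operations: the paper first commutes $t^{-\a}$ past $\n_t$ and then decomposes $\n_t(t^{-\a}u_1)$ via $\grad(t)$, whereas you decompose $\n_t$ first and then commute the weight through $\n_{\grad(t)}$.
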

\begin{proof}
In this proof we let $C > 0$ denote some constant which may change from term to term.
The equality
\[
	t^{-\a} \on_t u_1 = \on_t(t^{-\a}u_1) + \frac{1} t t^{-\a} \a u_1
\]
and assumption \eqref{eq: pointwise bound assumption} imply that
\begin{align*}
	\abs{t^{-\a}\bBox u_1} + \abs{t^{-\a}\on_t u_2} 
		&\leq \frac C{\abs{t}} \big(\abs{\on_W (t^{-\a}u_1)} + \abs{t^{-\a}u_1} + \abs{t^{-\a}u_2} \big) \\*
		&\qquad + C \big(\abs{t^{-\a}\on_t u_1} + \abs{\en(t^{-\a} u_1)}\big) \\
		&\leq \frac C{\abs{t}} \big(\abs{\on_W (t^{-\a}u_1)} +  \abs{t^{-\a}\a u_1} + \abs{t^{-\a}u_2}\big) \\*
		&\qquad + C\big(\abs{\on_t(t^{-\a} u_1)} + \abs{\en (t^{-\a}u_1)}\big).
\end{align*}
Equation \eqref{eq: grad(t)} implies
\[
	\abs{\on_t (t^{-\a}u_1)} \leq \frac C {\abs{t}} \big(\abs{\on_{\grad(t)}(t^{-\a}u_1)} + \abs{\on_W(t^{-\a}u_1)}\big) + C \abs{\en (t^{-\a}u_1)},
\]
and we get
\begin{align*}
	\abs{t^{-\a}\bBox u_1} + \abs{t^{-\a}\on_t u_2} 
		&\leq \frac C{\abs{t}} \big(\abs{\on_{\grad(t)} (t^{-\a}u_1)} + \abs{\on_W (t^{-\a}u_1)} + \abs{t^{-\a}\a u_1}\big) \\*
		&\qquad + C \abs{\en (t^{-\a}u_1)} + \frac C{\abs t}\abs{t^{-\a}u_2}.
\end{align*}
Integrating the squared inequality gives
\begin{align*} 
	\norm{t^{-\a}\bBox u_1}_{L^2} + \norm{t^{-\a}\on_t u_2}_{L^2} 
		&\leq C\norm{\frac 1t \on_{\grad(t)}(t^{-\a}u_1)}_{L^2} + C\norm{\frac1t \on_W(t^{-\a}u_1)}_{L^2} \\*
		&\qquad + C\norm{\frac{t^{-\a}\a u_1}t}_{L^2} + C \norm{\en(t^{-\a}u_1)}_{L^2} \\*
		&\qquad + C\norm{\frac{t^{-\a}u_2}t}_{L^2} \\
		&\leq C\norm{t^{-\a} u_1}_{H^1_\a} + C\norm{t^{-\a-1}u_2}_{L^2},
\end{align*}
as claimed.
\end{proof}

We now have everything in place to give the proof of Theorem \ref{thm: unique continuation}:

\begin{proof}[Proof of Theorem \ref{thm: unique continuation}]
The goal is to show that $u_1$ and $u_2$ vanish on $[-T, T] \times N$ for arbitrary $T \in (0, \e)$.
Let $\varphi \in C_c^\infty((-\e, \e) \times N, \R)$, such that $\varphi = 1$ on $[-T, T] \times N$.
Define 
\[
	(f_1, f_2) \in C_*^\infty((-\e, \e) \times N, F_1 \oplus F_2)
\]
by
\[
	(f_1, f_2) =(\varphi u_1, \varphi u_2).
\]
Let $C > 0$ denote some constant which may change from term to term.
Theorem \ref{thm: Carleman estimate} and Proposition \ref{prop: Carleman estimate transport} together with Lemma \ref{le: trivial conseq from ass} imply that
\begin{align}
	\sqrt \a \norm{t^{-\a}f_1}_{H^1_\a} + &\a \norm{t^{-\a-1}f_2}_{L^2} \nonumber \\ 
		&\leq C \norm{t^{-\a}\bBox f_1}_{L^2} + \norm{t^{-\a}\on_tf_2}_{L^2} \nonumber \\
		&\leq C \norm{t^{-\a}\bBox u_1}_{L^2} + \norm{t^{-\a}\on_tu_2}_{L^2} \nonumber \\*
		&\qquad + C \norm{t^{-\a}\bBox (f_1 - u_1)}_{L^2} + \norm{t^{-\a}\on_t(f_2 - u_2)}_{L^2} \nonumber \\
		&\leq C \norm{t^{-\a} u_1}_{H^1_\a} + C \norm{t^{-\a-1}u_2}_{L^2} \nonumber \\
		&\qquad + C \norm{t^{-\a}\bBox (f_1 - u_1)}_{L^2} + \norm{t^{-\a}\on_t(f_2 - u_2)}_{L^2} \nonumber \\
		&\leq C \norm{t^{-\a} f_1}_{H^1_\a} + C \norm{t^{-\a-1}f_2}_{L^2} \nonumber \\
		&\qquad + C \norm{t^{-\a} (f_1 - u_1)}_{H^1_\a} + C \norm{t^{-\a-1}(f_2 - u_2)}_{L^2} \nonumber \\
		&\qquad + C \norm{t^{-\a}\bBox (f_1 - u_1)}_{L^2} + \norm{t^{-\a}\on_t(f_2 - u_2)}_{L^2} \label{eq: v and u}
\end{align}
for some constant $C > 0$ independent of $\a$.
We estimate the second and third terms on the right hand side in the estimate \eqref{eq: v and u} by observing
\[
	\supp\big((u_1, u_2) - (f_1, f_2)\big) \subset \big( (-\e, -T] \cup [T, \e)\big) \times N,
\]
from which we conclude that
\begin{align*}
	C \norm{t^{-\a} (f_1 - u_1)}_{H^1_\a} + C \norm{t^{-\a-1}(f_2 - u_2)}_{L^2}
		&\leq CT^{-\a}, \\
	C \norm{t^{-\a}\bBox (f_1 - u_1)}_{L^2} + \norm{t^{-\a}\on_t(f_2 - u_2)}_{L^2} 
		&\leq C T^{-\a}
\end{align*}
for some constant $C > 0$ independent of $\a$.
Note that the constant $C$ on the right hand side in these estimates depend on the functions $u_1, u_2, \varphi$, which are fixed throughout the proof.

Inserting this into the estimate \eqref{eq: v and u} implies
\begin{align*}
	\sqrt \a \norm{t^{-\a}f_1}_{H^1_\a} + \a \norm{t^{-\a-1}f_2}_{L^2} 
		&\leq C \norm{t^{-\a}f_1}_{H^1_\a} + C \norm{t^{-\a-1}f_2}_{L^2} + CT^{-\a}.
\end{align*}
For $\a$ large enough, we get the estimate
\begin{equation}
	\sqrt \a \norm{t^{-\a}f_1}_{H^1_\a} + \a \norm{t^{-\a-1}f_2}_{L^2} \leq CT^{-\a}. \label{eq: final alpha}
\end{equation}
We claim that \eqref{eq: final alpha} implies that $f_1$ and $f_2$ have to vanish on $[-T, T]\times N$.
Assume that there is a $(t_0, x_0) \in (-T, T) \times N$, such that $(f_1, f_2)(t_0, x_0) \neq 0$. 
By continuity of $f_1$ and $f_2$, there is a constant $C > 0$ (dependent on $f_1$ and $f_2$, which are fixed), such that
\[
	C \abs{t_0}^{-\a} \leq \norm{t^{-\a}f_1}_{L^2} + \norm{t^{-\a}f_2}_{L^2} \leq \sqrt \a \norm{t^{-\a}f_1}_{H^1_\a} + \a \norm{t^{-\a-1}f_2}_{L^2}.
\]
The estimate \eqref{eq: final alpha} now implies
\[
	C\abs{t_0}^{-\a} \leq CT^{-\a},
\]
or equivalently
\[
	\left(\frac{T}{\abs{t_0}}\right)^{\a} \leq C
\]
for all $\a$ large enough.
Letting $\a \to \infty$ and recalling that $T > \abs{t_0}$, we reach a contradiction.
It follows that $(f_1, f_2)(t, x) = 0$ for all $\abs{t} \leq T$. 
Consequently, $(u_1, u_2)(t, x) = 0$ for all $\abs{t} \leq T$.
Since $T \in (0, \e)$ was arbitrary, this finishes the proof.
\end{proof}

We now present the proof of Corollary \ref{cor: finite order vanishing}:
\begin{proof}[Proof of Corollary \ref{cor: finite order vanishing}]
We first prove the special case when $\kappa = \frac12$, which we have assumed throughout Section \ref{sec: unique cont}.
The idea is to use the fact that
\begin{align*}
	0 
		&= (\on_t)^m Pu|_{t = 0} \\
		&= (\on_t)^m \bBox u|_{t = 0} + (\on_t)^m B(\on u)|_{t = 0} + (\on_t)^mA u|_{t = 0},
\end{align*}
in order to prove that $\on^m u|_{t = 0} = 0$ for all $m \in \N$ and then apply Theorem \ref{thm: Wave main}.
Lemma \ref{le: comm X} implies that
\[
	[\n_t, \bBox] = g(\L_tg, \on^2) + Q_1,
\]
where $\L_t g := \L_{\d_t}g$ and $Q_1$ is a first order differential operator (with smooth coefficients).
It follows that
\begin{align}
	\left(\on_t\right)^m \bBox 
		&= \sum_{j = 0}^{m-1} \left(\on_t\right)^{m-j-1}[\on_t, \bBox]\left(\on_t\right)^j + \bBox \left(\on_t\right)^m \nonumber \\
		&= m g(\L_tg, \on^2)\left(\on_t\right)^{m-1} + \bBox \left(\on_t\right)^m + Q_m, \label{eq: m:th derivative of box}
\end{align}
where $Q_m$ is a differential operator of order $m$.
We deduce from Proposition \ref{prop: metric in time function} that
\[
	g|_{t = 0} = 	\begin{pmatrix}
							0 & 1 & 0 \\
							1 & 0 & 0 \\
							0 & 0 & \g
						\end{pmatrix} \ \Longrightarrow	\
	g^{-1}|_{t = 0} = 	\begin{pmatrix}
							0 & 1 & 0 \\
							1 & 0 & 0 \\
							0 & 0 & \g^{-1}
						\end{pmatrix}
\]
with respect to the splitting
\[
	TM|_{t = 0} = \R \d_t \oplus \R V \oplus E.
\]
Note that
\begin{align*}
	\L_tg(V, V)|_{t = 0}
		&= 2 g(\n_V \d_t, V)|_{t = 0} \\*
		&= - 2 g(\d_t, \n_V V)|_{t = 0} \\*
		&= -1.
\end{align*}
It follows that
\[
	g(\L_tg, \on^2)|_{t = 0} = - (\on_t)^2|_{t = 0} + R_1|_{t = 0},
\]
where we let $R_m$ denote some differential operator such that $R_m(u)|_{t = 0} = 0$ whenever $u|_{t = 0} = \hdots = (\on_t)^m u|_{t = 0} = 0$, i.e.\ $R_m$ differentiates in $t$-direction at most $m$ times at $t = 0$.
We allow $R_m$ to change from line to line without explicitly mentioning it.
Since $\H$ is totally geodesic, we know that $\n_XY \in T\H$ for any vector fields $X, Y$, which implies that
\begin{align*}
	\bBox|_{t = 0}
		&= -\on^2_{V, t}|_{t = 0} - R(V, \d_t)|_{t = 0} - \sum_{i,j = 2}^n \g^{ij}\on^2_{e_i, e_j}|_{t = 0} \\
		&= - 2\on_V \on_t|_{t = 0} + 2\on_{\n_V \d_t}|_{t = 0} + R_0|_{t = 0} \\
		&= - 2\on_V \on_t|_{t = 0} - \on_t|_{t = 0} + R_0|_{t = 0},
\end{align*}
where $R_0$ is as above.
Altogether, we conclude that
\begin{equation} \label{eq: time derivative box}
	(\on_t)^m\bBox|_{t = 0} = -2 \on_V (\on_t)^{m+1}|_{t = 0} - (m+1)(\on_t)^{m+1}|_{t = 0} + R_m|_{t = 0},
\end{equation}
where $R_m$ is as above.
For the first order term, we have
\begin{align}
	(\on_t)^m B(\on u)|_{t = 0}
		&= B(\on (\on_t)^mu)|_{t = 0} + R_m(u)|_{t = 0} \nonumber \\
		&= B(g(V, \cdot) \otimes (\on_t)^{m+1} u)|_{t = 0} + R_m(u)|_{t = 0}. \label{eq: time derivative B}
\end{align}
The last term is simply
\begin{equation} \label{eq: time derivative A}
	(\on_t)^mA u|_{t = 0}
		= R_m(u)|_{t = 0}.
\end{equation}
Combining equations \eqref{eq: time derivative box}, \eqref{eq: time derivative B} and \eqref{eq: time derivative A}, we observe that since $(\on_t)^m Pu|_{t = 0} = 0$, we conclude:
\begin{align*}
	2\on_V (\on_t)^{m+1} u|_{t = 0} 
		&= - (m+1)(\on_t)^{m+1}u|_{t = 0} + B(g(V, \cdot) \otimes (\on_t)^{m+1}u)|_{t = 0} \\
		&\qquad + R_m(u)|_{t = 0}.
\end{align*}
Now, by assumption in Corollary \ref{cor: finite order vanishing} we know that $(\on_t)^m u|_{t = 0}$ for all $m \leq l$.
We want to prove, by an induction argument, that $(\on_t)^m u|_{t = 0}$ for all $m \in \N_0$.
For this, assume that $(\on_t)^m u|_{t = 0} = 0$ for all $m \leq k$, where $k \geq l$.
The assumption in Corollary \ref{cor: finite order vanishing} says in particular that
\[
	\mathrm{Re}\Big(a\big(B(g(V, \cdot)\otimes (\on_t)^{k+1}u), (\on_t)^{k+1}u\big) \Big)|_{t = 0} 
		\leq l a((\on_t)^{k+1}u), (\on_t)^{k+1}u)|_{t = 0},
\]
which we use to compute that
\begin{align*}
	\d_V a((\on_t)^{k+1}u, (\on_t)^{k+1}u)|_{t = 0}
		&= \mathrm{Re}\big(a(2\on_V(\on_t)^{k+1}u, (\on_t)^{k+1}u)|_{t = 0}\big) \\
		&= -(k+1) a((\on_t)^{k+1}u, (\on_t)^{k+1}u)|_{t = 0} \\
		&\qquad +  \mathrm{Re}\big( a(B(g(V, \cdot)\otimes (\on_t)^{k+1}u), (\on_t)^{k+1}u)|_{t = 0} \big) \\
		&\leq -(k+1-l)a((\on_t)^{k+1}u, (\on_t)^{k+1}u)|_{t = 0} \\
		&\leq -a((\on_t)^{k+1}u, (\on_t)^{k+1}u)|_{t = 0}.
\end{align*}
Now, the real valued \emph{scalar function} $a((\on_t)^{k+1}u, (\on_t)^{k+1}u)|_{t = 0}$ on the \emph{compact} manifold $\H$ must attain its maximum and minimum somewhere at $\H$, say $p_{\min}$ and $p_{\max}$. 
At $p_{\min}$ and $p_{\max}$, we have
\[
	\d_V a((\on_t)^{k+1}u, (\on_t)^{k+1}u)|_{t = 0} = 0.
\]
Hence our above inequality implies that
\[
	a((\on_t)^{k+1}u, (\on_t)^{k+1}u)|_{t = 0} \leq 0,
\]
at $p_{\min}$ and $p_{\max}$.
But, since $a$ is positive definite, this implies that
\[
	a((\on_t)^{k+1}u, (\on_t)^{k+1}u)|_{t = 0} = 0
\]
at $p_{\min}$ and $p_{\max}$ and hence everywhere.
Positive definiteness of $a$ implies therefore that
\[
	(\on_t)^{k+1}u|_{t = 0} = 0.
\]
This completes the induction argument, which shows that
\[
	(\on_t)^mu|_{t = 0} = 0
\]
for all $m \in \N_0$.
The assertion, when $\kappa = \frac12$, now follows from Theorem \ref{thm: Wave main}.
If instead
\[
	\n_V V = \kappa V,
\]
for some general $\kappa$, let us apply the above to $\widetilde V := \frac{1}{2\kappa}V$, which satisfies
\[
	\n_{\widetilde V} \widetilde V = \frac12 \widetilde V.
\]
This finishes the proof.
\end{proof}

\section{Extension of Killing vector fields} \label{sec: extension of KVFs}

The purpose of this section is to apply Theorem \ref{thm: unique continuation} to prove the remaining Theorems \ref{thm: Killing Horizon main}, \ref{thm: Killing close to horizon}, \ref{thm: Killing Extension main} and \ref{thm: Killing Extension bh} and Corollary \ref{cor: second Killing}.

\subsection{Compact Cauchy horizons} \label{subsec: compact Cauchy}
Assume that $\H$ is the future Cauchy horizon of $\S$, the other case is obtained by changing the time orientation.
We begin with the proof of Theorem \ref{thm: Killing Extension main}:

\begin{proof}[Proof of Theorem \ref{thm: Killing Extension main}]

By assumption, there is a smooth vector field $Y$ such that
\begin{equation} \label{eq: vanishing Killing tensor}
	(\n_t)^m \L_Yg|_\H = 0
\end{equation}
for all $m \in \N_0$.
We begin by showing the existence of a Killing vector field $\widehat Z$ on $D(\S) \cup \H$, such that
\[
	(\n_t)^m \widehat Z|_\H = (\n_t)^m Y|_\H
\]
for all $m \in \N_0$.
Since $\Ric = 0$, equation \eqref{eq: vanishing Killing tensor} implies that
\[
	(\n_t)^m \Box Y|_\H = - (\n_t)^m\div \left(\L_Yg - \frac12 \tr_g(\L_Yg)g\right)|_\H = 0
\]
for all $m \in \N_0$.
By \cite{Petersen2018}*{Thm.\ 1.6}, there is a unique smooth vector field $\widehat Z$, defined on $D(\S) \cup \H$, such that
\begin{align*}
	\Box \widehat Z 
		&= 0, \\
	(\n_t)^m \widehat Z|_\H 
		&= (\n_t)^m Y|_\H
\end{align*}
for all $m \in \N_0$.
Inserting this into \cite{PetersenRacz2018}*{Lem.\ 2.3}, we get
\begin{equation} \label{eq: wave for tilde Z}
	\Box \L_{\widehat Z} g - 2 \mathrm{Riem}(\L_{\widehat Z}g) = 0,
\end{equation}
where $\mathrm{Riem}(\L_{\widehat Z}g)$ is a certain linear combination of $\L_{\widehat Z}g$ and the curvature tensor.
Since 
\begin{equation*}
	(\n_t)^m \L_{\widehat Z}g|_{\H} = (\n_t)^m \L_Yg|_{\H} = 0
\end{equation*}
for all $m \in \N_0$, \cite{Petersen2018}*{Thm.\ 1.6} and \eqref{eq: wave for tilde Z} imply that $\L_{\widehat Z}g = 0$ on $D(\S) \cup \H$.

We now show how to extend $\widehat Z$ to a Killing vector field $Z$ beyond $\H$.
The main ingredient in this proof is Theorem \ref{thm: unique continuation} with $N = \H$.
We combine this with Ionescu-Klainerman's recently developed method of extending Killing vector fields only based on unique continuation. 
By Theorem \ref{thm: smoothness Cauchy horizon} and \cite{Petersen2018}*{Rmk.\ 1.15}, it follows that $\H$ satisfies Assumption \ref{ass: N compact}.
Since $\widehat Z$ is a Killing vector field on $\mathcal V := (-\e, 0) \times \H \subset D(\S)$, one checks that 
\[
	\n_t \n_t \widehat Z - R(\d_t, \widehat Z)\d_t = 0
\]
on $\mathcal V$.
We define our candidate Killing vector field $Z$ on $(-\e, \e) \times \H$ by solving the linear transport equation
\begin{align}
	\n_t \n_t Z - R(\d_t, Z)\d_t 
		&= 0, \label{eq: propagation Z} \\
	Z|_{\mathcal V}
		&= \widehat Z|_{\mathcal V}. \label{eq: W and Z on O}
\end{align}

Define the smooth two-form $\o$ on $(-\e, \e) \times \H$ as the unique solution to the following linear transport equation:
\begin{align}
	\n_t \o(X, Y) 
		&= \L_Zg(X, \n_Y\d_t) - \L_Zg(\n_X\d_t, Y), \label{eq: omega transport}\\
	\o|_{\mathcal V}
		&= 0, \label{eq: vanishing one-form}
\end{align}
for any $X, Y \in T((-\e, \e) \times \H)$.
The following tensors were introduced in \cite{IonescuKlainerman2013}*{Def.\ 2.3}.
For convenience, we use abstract index notation and the Einstein summation convention:
\begin{align*}
	B_{\a \b} 
		&:= \frac12 \big((\L_Z g)_{\a\b} + \o_{\a\b} \big), \\
	P_{\a \b \gamma}
		&:= \frac12 \big(\n_\a (\L_Zg)_{\b \gamma} - \n_\b (\L_Zg)_{\a \gamma} - \n_\gamma \omega_{\a \b} \big), \\
	T_{\a \b \gamma \delta}
	 	&:= (\L_ZR)_{\a \b \gamma \delta} - {B_\a}^\lambda R_{\lambda \b \gamma \delta} - {B_\b}^\lambda R_{\a \lambda \gamma \delta} - {B_\gamma}^\lambda R_{\a \b \lambda \delta} - {B_\delta}^\lambda R_{\a \b  \gamma \lambda},
\end{align*}
where $R$ is the Riemann curvature tensor.
Ionescu and Klainerman show in \cite{IonescuKlainerman2013}*{Prop. 2.10} that these tensors satisfy a homogeneous system of linear wave equations, coupled to linear transport equations, assuming \eqref{eq: omega transport}.
In other words, there are smooth endomorphism fields $A_1, A_2$ such that
\begin{align}
	\bBox T
		&= A_1(T, \on T, B, \on B, P, \on P), \label{eq: T equation}\\
	\on_t(B, \on B, P, \on P)
		&= A_2(T, \on T, B, \on B, P, \on P), \label{eq: rest equation}
\end{align}
on $(-\e, \e) \times \H$, where $\on$ is a connection compatible with a positive definite metric on the tensors, for example a Levi-Civita connection with respect to some arbitrary choice of \emph{Riemannian} metric on $M$.

We want to apply Theorem \ref{thm: unique continuation} with
\begin{align*}
	u_1 
		&:= T, \\
	u_2
		&:= (B, \on B, P, \on P).
\end{align*}
For this, first note that \eqref{eq: W and Z on O} and \eqref{eq: vanishing one-form} imply 
\begin{align*}
	(\on_t)^m u_1|_\H
		&= 0, \\
	(\on_t)^m u_2|_\H
		&= 0
\end{align*}
for all $m \in \N_0$.
Moreover, \eqref{eq: T equation} and \eqref{eq: rest equation} can be written as
\begin{align}
	\bBox u_1
		&= A_1(u_1, \on u_1, u_2), \\
	\on_tu_2
		&= A_2(u_1, \on u_1, u_2).
\end{align}
Using the splitting \eqref{eq: tangent bundle split}, note that assumption \eqref{eq: pointwise bound assumption} in Theorem \ref{thm: unique continuation} is satisfied.
Applying Theorem \ref{thm: unique continuation} with $N = \H$, we conclude that $u_1 = 0$ and $u_2 = 0$ on $(-\e, \e) \times \H$ (after shrinking $\e$, if necessary).
It follows in particular that $B = 0$ on $(-\e, \e) \times \H$.
Since $\L_Z g$ is symmetric and $\o$ is antisymmetric, we conclude that
\[
	\L_Z g = 0
\]
on $(-\e, \e) \times \H$.
This completes the existence part of Theorem \ref{thm: Killing Extension main} with 
\[
	\O := \big((-\e, \e) \times \H\big) \cup D(\S).
\] 

For the uniqueness part, assume that $\widetilde Z$ is another Killing vector field, such that 
\[
	(\n_t)^m \widetilde Z|_\H = (\n_t)^mY|_\H = (\n_t)^m Z|_\H
\]
for any $m \in \N_0$.
On $(-\e, \e) \times \H$, we have
\begin{align*}
	\n_t \n_t \widetilde Z - R(\d_t, \widetilde Z)\d_t 
		&= 0, \\
	\widetilde Z|_\H
		&= Z|_\H, \\
	\n_t \widetilde Z|_\H
		&= \n_t Z|_\H.
\end{align*}
But since $Z$ also solves this linear transport equation with the same initial data, it follows that $Z = \widetilde Z$ on $(-\e, \e) \times \H$. 
Moreover, since $\Ric = 0$ and $\L_{\widetilde Z}g = \L_Z g = 0$, we have
\[
	\Box (Z - \widetilde Z)|_{D(\S)} = - \div \left(\L_{Z - \widetilde Z}g - \frac12 \tr_g(\L_{Z - \widetilde Z}g) g \right)|_{D(\S)} = 0.
\]
Therefore, standard theory or \cite{Petersen2018}*{Thm.\ 1.6} implies that $Z|_{D(\S)} = \widetilde Z|_{D(\S)}$ as well.
This completes the proof. 
\end{proof}

\begin{proof}[Proof of Theorem \ref{thm: Killing Horizon main} and Theorem \ref{thm: Killing close to horizon}]
By \cite{PetersenRacz2018}*{Thm.\ 1.2} and \cite{PetersenRacz2018}*{Rmk.\ 3.1}, there is a unique Killing vector field $W$ on $D(\S) \cup \H$, which satisfies
\begin{align*}
	W|_\H 
		&= V, \\
	[\d_t, W]
		&= 0, \quad \text{ on } (-\e, 0] \times \H.
\end{align*}
Consistent with Theorem \ref{thm: Killing close to horizon}, we may therefore extend the Killing vector field to $(-\e, \e) \times \H$ by solving $[\d_t, W] = 0$.
It remains to prove that $\L_Wg = 0$ also on $(0, \e) \times \H$, i.e.\ beyond the Cauchy horizon.
Since we know that
\[
	(\n_t)^m \L_Wg|_{\H} = 0
\]
for all $m \in \N_0$, Theorem \ref{thm: Killing Extension main} implies the existence of a unique Killing vector field $Z$ on $(-\e, \e) \times \H$ such that 
\[
	(\n_t)^m Z|_\H = (\n_t)^m W|_\H,
\]
for all $m \in \N_0$.
Note that it suffices to prove that $W = Z$ beyond the Cauchy horizon, i.e.\ on $(0, \e) \times \H$.
Recall the defining equation \eqref{eq: propagation Z} in the proof of Theorem \ref{thm: Killing Extension main}, which says that $Z$ satisfies the transport equation
\[
	\n_t \n_t Z - R(\d_t, Z)\d_t = 0
\]
on $(-\e, \e) \times \H$.
Since we know that $Z$ and $W$ coincide on $(-\e, 0] \times \H$, it suffices to prove that
\[
	\n_t \n_t W - R(\d_t, W)\d_t = 0
\]
on $(-\e, \e) \times \H$, by uniqueness of linear transport equations.
Using that $[W, \d_t] = 0$ and $\n_t \d_t = 0$, we compute
\begin{align*}
	\n_t \n_t W - R(\d_t, W)\d_t
		&= \n_t\n_W\d_t - R(\d_t, W)\d_t \\
		&= \n_W \n_t \d_t \\
		&= 0,
\end{align*}
as claimed. 
Therefore $W = Z$ on $(-\e, \e) \times \H$ and we conclude that $\L_Wg = 0$ on $(-\e, \e) \times \H$.
By Proposition \ref{prop: metric in time function} and Corollary \ref{cor: N is Cauchy horizon}, we know that $W$ is spacelike in $D(\S)$ close to $\H$, lightlike on $\H$ and timelike in $\O \backslash (D(\S) \cup \H)$ close to $\H$. 

For the uniqueness part, assume that $\widetilde W$ is a second Killing vector field on $\O$ such that
\[
	\widetilde W|_\H = V.
\]
We claim that $\widetilde W = W$.
Since $W$ and $\widetilde W$ are Killing vector fields such that $\widetilde W|_\H = V = W|_\H$, we get
\begin{align*}
	g(\n_t \widetilde W, \d_t)|_\H
		&= 0 = g(\n_t W, \d_t)|_\H, \\
	g(\n_t \widetilde W, X)|_\H
		&= - g(\n_X \widetilde W, \d_t)|_\H = - g(\n_X W, \d_t)|_\H = g(\n_t W, X)|_\H
\end{align*}
for any $X \in T\H$.
It follows that $\n_t \widetilde W|_\H = \n_t W|_\H$.
Since both $W$ and $\widetilde W$ are Killing vector fields, we know that
\[
	\n_t\n_t (\widetilde W - W) = R(\d_t, \widetilde W - W)\d_t
\]
on $(-\e, \e) \times \H$.
Hence
\[
	\widetilde W|_{(-\e, \e) \times \H} = W|_{(-\e, \e) \times \H}.
\]
The uniqueness part of Theorem \ref{thm: Killing Extension main} implies therefore that $\widetilde W = W$ on $\O$, as claimed.
\end{proof}

\begin{proof}[Proof of Corollary \ref{cor: second Killing}]
By \cite{PetersenRacz2018}*{Cor.\ 1.1} and its proof, there is a second Killing vector field $Z$ on $D(\S)$ such that $[\d_t, Z] = 0$ up to $\H$.
In particular, it follows that
\[
	(\n_t)^m\L_Zg|_{t = 0} = 0
\]
for all $m \in \N_0$.
By Theorem \ref{thm: Killing Extension main}, there is an extension of $Z$ beyond $\H$.
Applying the same method as in the proof of Theorem \ref{thm: Killing Horizon main}, one notes that $[\d_t, Z] = 0$ also beyond the horizon. 
The uniqueness part of Theorem \ref{thm: Killing Extension main} implies that $Z$ is different from $W$. 
Moreover, if $[Z, W] = 0$ on one side of the horizon, then we have
\[
	[\d_t, [Z, W]] = - [W, [\d_t, Z]] - [Z, [W, \d_t]] = 0,
\]
proving that $[Z, W] = 0$ also on the other side of the horizon.
By \cite{PetersenRacz2018}*{Cor.\ 1.1}, this proves the last assertion.
\end{proof}

\subsection{Black hole event horizons} \label{subsec: event horizon}

We now prove Theorem \ref{thm: Killing Extension bh}, using Theorem \ref{thm: Killing Horizon main}.
Recall that we denoted the \emph{stationary} Killing vector field on $M$ by $K$.
By \cite{ChruscielCosta2008}*{Prop.\ 4.1 \& Prop.\ 4.3 \& Thm.\ 4.11}, $\H_{bh} \subset M$ is a smooth hypersurface and there is a smooth hypersurface $S_0 \subset \H_{bh}$ (codimension $2$ submanifold in $M$) which is transversal to both $K$ and to all generators (lightlike integral curves) in $\H_{bh}$.
Moreover, all integral curves of $K$ along $\H_{bh}$ and all generators of $\H_{bh}$ intersect $S_0$ precisely once.
In addition, we have this:

\begin{lemma} \label{le: event tot geodesic}
The event horizon $\H_{bh}$ is totally geodesic.
\end{lemma}
This statement is classical, but in lack of an appropriate reference, let us give the proof here:
\begin{proof}
The proof is a straightforward modification of the first part of the proof of Proposition \ref{prop: null time function}.
As in that proof, one constructs the vector bundle 
\[
	T\H / {\R V}
\]
and defines the expansion $\theta$, which satisfies $\theta \leq 0$.
Since $[V, K] = 0$ and $K$ is a Killing vector field, it follows that $\d_K \theta = 0$. 
Since $V$ is nowhere vanishing on $\H_{bh}$ and each integral curve of $V$ intersects $S_0$, the flow generated by $V$ is an diffeomorphism of $\H_{bh}$ without fix points, commuting with the flow of $K$.
Thus, since $K$ is nowhere vanishing at $S_0$, $K$ does not vanish anywhere on $\H_{bh}$ and any integral curve of $K$ intersects $S_0$ exactly once.
Since $K$ is complete, tangent to $\H_{bh}$ and transversal to $S_0$, we may flow $S_0$ along $K$ and write $\H_{bh}$ as a foliation $\H_{bh} = \R \times S_0$, where $\{0\} \times S_0$ corresponds to $S_0$.
In particular, the flow of $K$ induces a free and proper $\R$-action of isometries.

As in the proof of Proposition \ref{prop: null time function}, one constructs the Riemannian metric $\sigma$ on $\H_{bh}$, such that
\begin{equation} \label{eq: flow V sigma}
	\L_Vd\mu_\sigma = -\theta d \mu_\sigma,
\end{equation}
where $d\mu_\sigma$ is the volume density with respect to $\sigma$.
The construction of $\sigma$ relied on \cite{Larsson2015}*{Lem.\ 1.3}, in which there is a certain freedom in choosing $\sigma$.
We will use this freedom here and make a suitable choice.
Fix a unit timelike vector field $T$ at $S_0$ (which is necessarily transversal to $\H_{bh}$) such that $g(V, T) = 1$.
Then extend $T$ along $\H_{bh}$ by requiring that $[T, K] = 0$.
It follows that $g(T, T) = -1$ and that $g(V, T) = 1$ on $\H_{bh}$, in particular $T$ is transversal to $\H_{bh}$.
Now, \cite{Larsson2015}*{Lem.\ 1.3} implies that the Riemannian metric
\[
	\sigma(X, Y) := g(X, Y) + g(X, T) g(Y, T)
\]
satisfies \eqref{eq: flow V sigma}.
Note, moreover, that $\L_K \sigma = 0$, which implies that 
\begin{equation} \label{eq: flow K sigma}
	\L_K d\mu_\sigma = 0.
\end{equation}

Since the flow $\phi_s$ of $K$ ($s \in \R$ is the flow time) gives a free and proper $\R$-action of isometries, we may pass to the quotient $\H_{bh}/{\sim}$, where $p {\sim} q$ if there is a $z \in \Z$ such that $p = \phi_z(q)$.
Since $[K, V] = 0$ and $\d_K \theta = 0$, $V$ and $\theta$ descend to the quotient and equation \eqref{eq: flow V sigma} now holds on the \emph{compact} manifold $\H_{bh}/{\sim}$.
Flowing along $V$ on $\H_{bh}/{\sim}$ induces a diffeomorphism of a compact manifold.
This means that the volume is finite and has to stay the same.
Since we already know that $\theta \leq 0$, equation \eqref{eq: flow V sigma} implies that $\theta = 0$ on $\H_{bh}/{\sim}$.
Therefore $\theta = 0$ also on $\H_{bh}$.
Proceeding as in Proposition \ref{prop: null time function}, we conclude that $\H_{bh}$ is totally geodesic.
\end{proof}

By substituting $V$ by $\frac1{2\kappa}V$ we may assume that $\kappa = \frac12$. 
Since $\H_{bh}$ is a future event horizon, the integral curves of $V$ are future complete.
This implies that $V$ is future directed.
Along the lines in Subsection \ref{subsec: null time function}, using that $\H_{bh}$ is totally geodesic, one shows that there is a unique \emph{past directed} lightlike vector field $L$ along $\H_{bh}$ (c.f.\ Figure \ref{fig: EH to CH}) such that
\begin{align*}
	g(L, V)
		&= 1, \\
	g(L, X)
		&= 0,	
\end{align*}
for all $X \in T\H_{bh}$ such that $\n_X V = 0$.
Note that $L$ is nowhere vanishing and transveral to $\H_{bh}$.
Let $\phi_s$ denote the flow under $K$.
Since $\phi_s$ are isometries and $d \phi_s(V) = V$, we conclude that $\n_{d\phi_s(X)} V = 0$ for all vectors $X \in T\H_{bh}$ such that $\n_X V = 0$.
Using this, note that
\begin{align*}
	g(d\phi_s(L), V)
		&= 1, \\
	g(d\phi_s(L), X)
		&= 0,	
\end{align*}
for all $X \in T\H_{bh}$ such that $\n_X V = 0$.
Consequently, $d\phi_s(L) = L$, i.e.\ 
\begin{equation}
	[K, L] = 0.
\end{equation}
Our strategy is to construct a null time function in a neighbourhood of the event horizon by first flowing $S_0$ along the geodesics in direction of $L$ and then apply the flow of $K$.
The fact that $[K, L] = 0$ plays a crucial role in this argument.

\begin{lemma}[The null time function for event horizons]
The smooth map
\begin{align*}
	F: (-\e, \e) \times \R \times S_0 
		&\to M \\
	(t, s, p) 
		&\mapsto \exp(tL|_{\phi_s(p)}),
\end{align*}
is an immersion for some $\e > 0$ and there is an open subset
\[
	\left([0, \e) \times \R \times S_0\right) \subset \mathcal V \subset \left((-\e, \e) \times \R \times S_0 \right),
\]
such that $F|_\mathcal V$ is a diffeomorphism onto its image.
Moreover:
\begin{itemize}
	\item $F\left((-\e, 0) \times \R \times S_0\right) \subset \mathcal B$, i.e.\ the black hole region,
	\item $F\left(\{0\} \times \R \times S_0\right) = \H_{bh}$, i.e.\ the event horizon,
	\item $F\left((0, \e) \times \R \times S_0\right) \subset \langle\langle M_\ext \rangle\rangle$, i.e.\ the domain of outer communication.
\end{itemize}
\end{lemma}

The level sets of null time function are illustrated in Figure \ref{fig: EH to CH}.

\begin{remark}
Let us emphasise that it is not clear whether one can find an $\e > 0$ such that $F$ is injective.
The problem is that the subset $(-\e, 0) \times \R \times S_0$ is mapped into the black hole region, where we have made essentially no regularity assumptions (c.f.\ Assumption \ref{ass: black hole}). 
However, one can clearly modify our assumptions on stationary black holes slightly in order to make sure that $F$ is injective.
Exactly this technical point is the reason why we can only assure that $\U \cap \left( \H_{bh} \cup \langle\langle M_\ext \rangle\rangle\right)$ is invariant under $K$ in Theorem \ref{thm: Killing Extension bh}.
From our assumptions, we cannot assure that $\U$ is invariant under $K$ in the black hole region.
\end{remark}

\begin{proof}

Note first that since $\phi_s$ are isometries, we have
\begin{align*}
	\exp(tL_{\phi_s(p)})
		&= \exp(d\phi_s(tL_p)) \\
		&= \phi_s(\exp(tL_p)).
\end{align*}
Hence $F$ can be written as the composition $F = \phi_s \circ f$, where 
\begin{align*}
	f: (-\e, \e) \times S_0
		&\to M \\
	(t, p)
		&\mapsto \exp(tL|_p).
\end{align*}
By compactness of $S_0$, the smooth map $f$ is well-defined and a diffeomorphism onto its image, for some small $\e > 0$.
Shrinking $\e$ if necessary, we can make sure that $K$ is nowhere vanishing on $\im(f)$.
Since $\phi_s$ is a diffeomorphism, it follows that $F$ is an immersion. 
By construction, 
\[
	F(0, s, p) = \phi_s(p)
\]
for all $(s, p) \in \R \times S_0$, which proves that $F|_{\{0\} \times \R \times S_0}$ is a diffeomorphism onto $\H_{bh}$.

The existence of \emph{some} open set $\mathcal V$ such that $F|_\mathcal V$ is injective is therefore clear.
Let us now argue why we may conclude that $\mathcal V\supset \left([0, \e) \times \R \times S_0\right)$.
First of all, since $L$ is past directed, we know that $dF(\d_t)$ is past directed.
Since $\H_{bh}$ is a future event horizon, we conclude that $dF(\d_t)$ points into the domain of outer communication.
The statement now follows by the structure theorem for stationary black holes, \cite{ChruscielCosta2008}*{Thm.\ 4.5}.
\end{proof}

\begin{remark}
Even though we do not know that $F$ is injective, we may pull back the metric $g$ along $F$ and consider the Lorentzian manifold $((-\e, \e) \times \R \times S_0, F^*g)$.
We use the coordinates $(t, s)$ on $(-\e, \e) \times \R$.
One readily notes the following properties:
\begin{itemize}
	\item it is a vacuum spacetime, 
	\item the vector field $\d_s = F^*K$ is a Killing vector field,
	\item the hypersurface $\{0\} \times \R \times S_0$ is a lightlike hypersurface with constant non-zero surface gravity.
\end{itemize}
\end{remark}

\begin{proof}[Proof of Theorem \ref{thm: Killing Extension bh}]
Let $\varphi_s$ denote the flow of $\d_s$ on $(-\e, \e) \times \R \times S_0$.
Note that $\phi_s = F\circ \varphi_s$.
We get an \emph{isometric} action by $\Z$, given by
\begin{align*}
	\Z \times \left((-\e, \e) \times \R \times S_0 \right) 
		&\to (-\e, \e) \times \R \times S_0, \\
	(z, (t, s, p)) 
		&\mapsto (t, s+ z, p).
\end{align*}
Using this action, we pass to the locally isometric quotient
\[
	((-\e, \e) \times S^1 \times S_0, F^*g),
\]
which is a smooth vacuum spacetime.
Now, the hypersurface $N := \{0\} \times S^1 \times S_0$ is a \emph{compact} lightlike hypersurface, which is totally geodesic by Lemma \ref{le: event tot geodesic}.
Recall from Definition \ref{def: constant surface gravity event} that
\[
	[K, V] = 0
\]
on $\H_{bh}$, and hence
\[
	[\d_s, F^*V] = 0,
\]
which implies that $F^*V$ descends as a lightlike vector field on N.
Moreover, since the surface gravity of the event horizon was a non-zero constant, the same is true for $N$.
The projection 
\[
	t: (-\e, \e) \times S^1 \times S_0 \to (-\e, \e)
\]
is the null time function in the sense of Proposition \ref{prop: null time function}.
Corollary \ref{cor: N is Cauchy horizon} and Theorem \ref{thm: Killing Horizon main} imply therefore that there is a unique Killing vector field $W$ such that $[\d_t, W] = 0$ and $W|_{t = 0} = F^*V$.
It follows that 
\[
	[\d_s, W]|_{t = 0} = [\d_s, F^*V] = 0.
\]
Moreover, we have 
\[
	[\d_t, [\d_s, W]] = [[\d_t, \d_s], W] + [\d_s, [\d_t, W]] = 0.
\]
It follows that $[\d_s, W] = 0$ on $(-\e, \e) \times S^1 \times S_0$.
We may therefore lift the Killing vector field $W$ to $(-\e, \e) \times \R \times S_0$, still denoting it $W$.
Since $F|_\mathcal V$ is a diffeomorphism, we may push forward $W$ to a Killing vector field on $\U := F(\mathcal V)$, which we again call $W$, such that
\[
	W|_{\H_{bh}} = V
\]
and $[K, W] = 0 = [\d_t, W]$.
This completes the proof.
\end{proof}

\section*{Acknowledgements}
The author would like to thank Vincent Moncrief for important discussions and the Priority Program 2026: Geometry at Infinity, funded by Deutsche Forschungsgemeinschaft, for financial support.

\begin{bibdiv}
\begin{biblist}

\bib{AIK2010}{article}{
   author={Alexakis, S.},
   author={Ionescu, A. D.},
   author={Klainerman, S.},
   title={Hawking's local rigidity theorem without analyticity},
   journal={Geom. Funct. Anal.},
   volume={20},
   date={2010},
   number={4},
   pages={845--869},
}

\bib{AIK2010_2}{article}{
   author={Alexakis, S.},
   author={Ionescu, A. D.},
   author={Klainerman, S.},
   title={Uniqueness of smooth stationary black holes in vacuum: small
   perturbations of the Kerr spaces},
   journal={Comm. Math. Phys.},
   volume={299},
   date={2010},
   number={1},
   pages={89--127},
}

\bib{AIK2014}{article}{
   author={Alexakis, S.},
   author={Ionescu, A. D.},
   author={Klainerman, S.},
   title={Rigidity of stationary black holes with small angular momentum on
   the horizon},
   journal={Duke Math. J.},
   volume={163},
   date={2014},
   number={14},
   pages={2603--2615},
}

\bib{ASS2016}{article}{
   author={Alexakis, S.},
   author={Schlue, V.},
   author={Shao, A.},
   title={Unique continuation from infinity for linear waves},
   journal={Adv. Math.},
   volume={286},
   date={2016},
   pages={481--544},
}

\bib{ABIL2013}{article}{
   author={Ames, E.},
   author={Beyer, F.},
   author={Isenberg, J.},
   author={LeFloch, P. G.},
   title={Quasilinear hyperbolic Fuchsian systems and AVTD behavior in
   $T^2$-symmetric vacuum spacetimes},
   journal={Ann. Henri Poincar\'e},
   volume={14},
   date={2013},
   number={6},
   pages={1445--1523},
}

\bib{ABIL2013_2}{article}{
   author={Ames, E.},
   author={Beyer, F.},
   author={Isenberg, J.},
   author={LeFloch, P. G.},
   title={Quasilinear symmetric hyperbolic Fuchsian systems in several space
   dimensions},
   conference={
      title={Complex analysis and dynamical systems V},
   },
   book={
      series={Contemp. Math.},
      volume={591},
      publisher={Amer. Math. Soc., Providence, RI},
   },
   date={2013},
   pages={25--43},
}

\bib{AnderssonRendall2001}{article}{
   author={Andersson, L.},
   author={Rendall, A. D.},
   title={Quiescent cosmological singularities},
   journal={Comm. Math. Phys.},
   volume={218},
   date={2001},
   number={3},
   pages={479--511},
}

\bib{BW2015}{article}{
   author={B\"{a}r, C.},
   author={Wafo, R. T.},
   title={Initial value problems for wave equations on manifolds},
   journal={Math. Phys. Anal. Geom.},
   volume={18},
   date={2015},
   number={1},
   pages={Art.\ 7, 29},
}

\bib{BH2012}{article}{
   author={Beyer, F.},
   author={Hennig, J.},
   title={Smooth Gowdy-symmetric generalized Taub-NUT solutions},
   journal={Classical Quantum Gravity},
   volume={29},
   date={2012},
   number={24},
   pages={245017, 47},
}

\bib{BH2014}{article}{
   author={Beyer, F.},
   author={Hennig, J.},
   title={An exact smooth Gowdy-symmetric generalized Taub-NUT solution},
   journal={Classical Quantum Gravity},
   volume={31},
   date={2014},
   number={9},
   pages={095010, 33},
}

\bib{BL2010}{article}{
   author={Beyer, F.},
   author={LeFloch, P. G.},
   title={Second-order hyperbolic Fuchsian systems and applications},
   journal={Classical Quantum Gravity},
   volume={27},
   date={2010},
   number={24},
   pages={245012, 33},
}

\bib{C1971}{article}{
	author={Carter, B.},
	title={An axi-symmetric black hole has only two degrees of freedom},
	journal={Phys. Rev. Lett.},
   volume={26},
   date={1971},
   pages={331-333},
}

\bib{CDGH2001}{article}{
   author={Chru\'{s}ciel, P. T.},
   author={Delay, E.},
   author={Galloway, G. J.},
   author={Howard, R.},
   title={Regularity of horizons and the area theorem},
   journal={Ann. Henri Poincar\'{e}},
   volume={2},
   date={2001},
   number={1},
   pages={109--178},
}

\bib{ChruscielCosta2008}{article}{
   author={Chru\'{s}ciel, P. T.},
   author={Costa, J. L.},
   title={On uniqueness of stationary vacuum black holes},
   journal={Ast\'{e}risque},
   number={321},
   date={2008},
   pages={195--265},
}

\bib{FRW1999}{article}{
   author={Friedrich, H.},
   author={R\'{a}cz, I.},
   author={Wald, R. M.},
   title={On the rigidity theorem for spacetimes with a stationary event
   horizon or a compact Cauchy horizon},
   journal={Comm. Math. Phys.},
   volume={204},
   date={1999},
   number={3},
   pages={691--707},
}

\bib{Galloway2001}{article}{
   author={Galloway, G. J.},
   title={Null geometry and the Einstein equations},
   conference={
      title={The Einstein equations and the large scale behavior of
      gravitational fields},
   },
   book={
      publisher={Birkh\"{a}user, Basel},
   },
   date={2004},
   pages={379--400},
}

\bib{Hawking1972}{article}{
   author={Hawking, S. W.},
   title={Black holes in general relativity},
   journal={Comm. Math. Phys.},
   volume={25},
   date={1972},
   pages={152--166},
}

\bib{HawkingEllis1973}{book}{
   author={Hawking, S. W.},
   author={Ellis, G. F. R.},
   title={The large scale structure of space-time},
   note={Cambridge Monographs on Mathematical Physics, No. 1},
   publisher={Cambridge University Press, London-New York},
   date={1973},
   pages={xi+391},
}

\bib{HIW2007}{article}{
   author={Hollands, S.},
   author={Ishibashi, A.},
   author={Wald, R. M.},
   title={A higher dimensional stationary rotating black hole must be
   axisymmetric},
   journal={Comm. Math. Phys.},
   volume={271},
   date={2007},
   number={3},
   pages={699--722},
}

\bib{HS2017}{article}{
   author={Holzegel, G.},
   author={Shao, A.},
   title={Unique continuation from infinity in asymptotically anti--de
   Sitter spacetimes II: Non-static boundaries},
   journal={Comm. Partial Differential Equations},
   volume={42},
   date={2017},
   number={12},
   pages={1871--1922},
}

\bib{Hormander1985}{book}{
   author={H\"{o}rmander, L.},
   title={The analysis of linear partial differential operators. IV},
   series={Grundlehren der Mathematischen Wissenschaften},
   volume={275},
   note={Fourier integral operators},
   publisher={Springer-Verlag, Berlin},
   date={1985},
   pages={vii+352},
}

\bib{IonescuKlainerman2013}{article}{
   author={Ionescu, A. D.},
   author={Klainerman, S.},
   title={On the local extension of Killing vector-fields in Ricci flat
   manifolds},
   journal={J. Amer. Math. Soc.},
   volume={26},
   date={2013},
   number={2},
   pages={563--593},
}

\bib{IK2009}{article}{
   author={Ionescu, A. D.},
   author={Klainerman, S.},
   title={On the uniqueness of smooth, stationary black holes in vacuum},
   journal={Invent. Math.},
   volume={175},
   date={2009},
   number={1},
   pages={35--102},
}

\bib{IK2009_2}{article}{
   author={Ionescu, A. D.},
   author={Klainerman, S.},
   title={Uniqueness results for ill-posed characteristic problems in curved
   space-times},
   journal={Comm. Math. Phys.},
   volume={285},
   date={2009},
   number={3},
   pages={873--900},
}

\bib{IsenbergMoncrief1985}{article}{
   author={Isenberg, J.},
   author={Moncrief, V.},
   title={Symmetries of cosmological Cauchy horizons with exceptional
   orbits},
   journal={J. Math. Phys.},
   volume={26},
   date={1985},
   number={5},
   pages={1024--1027},
}

\bib{IM1992}{article}{
   author={Isenberg, J.},
   author={Moncrief, V.},
   title={On spacetimes containing Killing vector fields with nonclosed
   orbits},
   journal={Classical Quantum Gravity},
   volume={9},
   date={1992},
   number={7},
   pages={1683--1691},
}

\bib{Larsson2015}{article}{
   author={Larsson, E.},
   title={Smoothness of compact horizons},
   journal={Ann. Henri Poincar\'e},
   volume={16},
   date={2015},
   number={9},
   pages={2163--2214},
}

\bib{Minguzzi2015}{article}{
   author={Minguzzi, E.},
   title={Area theorem and smoothness of compact Cauchy horizons},
   journal={Comm. Math. Phys.},
   volume={339},
   date={2015},
   number={1},
   pages={57--98},
}

\bib{MoncriefIsenberg1983}{article}{
   author={Moncrief, V.},
   author={Isenberg, J.},
   title={Symmetries of cosmological Cauchy horizons},
   journal={Comm. Math. Phys.},
   volume={89},
   date={1983},
   number={3},
   pages={387--413},
}

\bib{MoncriefIsenberg2008}{article}{
   author={Moncrief, V.},
   author={Isenberg, J.},
   title={Symmetries of higher dimensional black holes},
   journal={Classical Quantum Gravity},
   volume={25},
   date={2008},
   number={19},
   pages={195015, 37},
}

\bib{MoncriefIsenberg2018}{article}{
   author={Moncrief, V.},
   author={Isenberg, J.},
   title={Symmetries of cosmological Cauchy horizons with non-closed orbits},
   journal={Comm. Math. Phys.},
   volume={374},
   date={2020},
   number={1},
   pages={145--186},
}

\bib{O'Neill1983}{book}{
   author={O'Neill, B.},
   title={Semi-Riemannian geometry},
   series={Pure and Applied Mathematics},
   volume={103},
   note={With applications to relativity},
   publisher={Academic Press, San Diego},
   date={1983},
   pages={xiii+468},
}

\bib{Petersen2018}{article}{
   author={Petersen, O. L.},
   title={Wave equations with initial data on compact Cauchy horizons},
   journal={preprint, arXiv:1802.10057, to appear in Analysis \& PDE, to appear},
   date={2018}
}

\bib{PetersenRacz2018}{article}{
   author={Petersen, O. L.},
   author={Rácz, I.},
   title={Symmetries of vacuum spacetimes with a compact Cauchy horizon of constant non-zero surface gravity},
   journal={Preprint: arXiv: 1809.02580},
   date={2018}
}

\bib{RaczWald1995}{article}{
   author={R\'{a}cz, I.},
   author={Wald, R. M.},
   title={Global extensions of spacetimes describing asymptotic final states
   of black holes},
   journal={Classical Quantum Gravity},
   volume={13},
   date={1996},
   number={3},
   pages={539--552},
}

\bib{Rendall1990}{article}{
   author={Rendall, A. D.},
   title={Reduction of the characteristic initial value problem to the
   Cauchy problem and its applications to the Einstein equations},
   journal={Proc. Roy. Soc. London Ser. A},
   volume={427},
   date={1990},
   number={1872},
   pages={221--239},
}

\bib{Rendall2000}{article}{
   author={Rendall, A. D.},
   title={Fuchsian analysis of singularities in Gowdy spacetimes beyond
   analyticity},
   journal={Classical Quantum Gravity},
   volume={17},
   date={2000},
   number={16},
   pages={3305--3316},
}

\bib{RB2021}{article}{
   author={Reiris, M.},
   author={Bustamente, I.},
   title={On the existence of Killing fields in smooth spacetimes with a compact Cauchy horizon},
   journal={Classical Quantum Gravity},
   volume={38},
   date={2021},
   number={7},
}

\bib{R1975}{article}{
   author={Robinson, D. C.},
   title={Uniqueness of the Kerr black hole},
   journal={Phys. Rev. Lett.},
   volume={34},
   date={1975},
   pages={905-906},
}

\bib{RodnianskiSpeck2018}{article}{
   author={Rodnianski, I.},
   author={Speck, J.},
   title={A regime of linear stability for the Einstein-scalar field system
   with applications to nonlinear big bang formation},
   journal={Ann. of Math. (2)},
   volume={187},
   date={2018},
   number={1},
   pages={65--156},
}

\bib{RodnianskiSpeck2018_2}{article}{
   author={Rodnianski, I.},
   author={Speck, J.},
   title={Stable big bang formation in near-FLRW solutions to the
   Einstein-scalar field and Einstein-stiff fluid systems},
   journal={Selecta Math. (N.S.)},
   volume={24},
   date={2018},
   number={5},
   pages={4293--4459},
}

\bib{Stahl2002}{article}{
   author={St\aa hl, F.},
   title={Fuchsian analysis of $S^2\times S^1$ and $S^3$ Gowdy spacetimes},
   journal={Classical Quantum Gravity},
   volume={19},
   date={2002},
   number={17},
   pages={4483--4504},
}

\bib{Vasy2010}{article}{
   author={Vasy, A.},
   title={The wave equation on asymptotically de Sitter-like spaces},
   journal={Adv. Math.},
   volume={223},
   date={2010},
   number={1},
   pages={49--97},
}

\end{biblist}
\end{bibdiv}

\end{sloppypar}
\end{document}